\theoremstyle{plain}
\newtheorem{theorem}{Theorem}[section]
\newtheorem{lemma}{Lemma}[section]
\newtheorem{proposition}{Proposition}[section]
\theoremstyle{definition}
\newtheorem{definition}{Definition}[section]
\theoremstyle{remark}
\newtheorem{remark}{Remark}[section]
\theoremstyle{example}
\newtheorem{example}{Example}[section]
\theoremstyle{corollary}
\newtheorem{corollary}{Corollary}[section]
\numberwithin{equation}{section}
\newcommand{\R}{\mathbb{R}}
\begin{document}
\title[Navier-Stokes Equations
with the Navier Boundary Condition]{A Study of the Navier-Stokes
Equations with the Kinematic and Navier Boundary Conditions}
\author{Gui-Qiang Chen \and Zhongmin Qian}
\address{G.-Q. Chen,  School of Mathematical Sciences, Fudan University,
 Shanghai 200433, China; Department of Mathematics, Northwestern University,
         Evanston, IL 60208-2730, USA}
\email{\tt gqchen@math.northwestern.edu}
\address{Z. Qian, Mathematical Institute, University of Oxford, 24-29 St Giles, Oxford OX1 3LB, UK}
\email{\tt qianz@maths.ox.ac.uk}

\keywords{Navier-Stokes equations, incompressible, Navier boundary
condition, kinematic boundary condition, general domain, spectral
theory, Stokes operator, slip length, curvature of the boundary,
vorticity, weak solutions, strong solutions, slip boundary
condition}
\subjclass[2000]{Primary: 35Q30,76D05,76D03,35A35,35B35;
Secondary: 35P05,76D07,36M60,58C35}
\date{\today}
\thanks{}

\begin{abstract}
We study the initial-boundary value problem of the Navier-Stokes
equations for incompressible fluids in a domain in $\R^3$
with compact and smooth boundary, subject to the kinematic and
Navier boundary conditions.
We first reformulate the Navier boundary condition in terms of the
vorticity, which is motivated by the Hodge theory on manifolds with
boundary from the viewpoint of differential geometry, and establish
basic elliptic estimates for vector fields subject to the kinematic
and Navier boundary conditions. Then we develop a spectral theory of
the Stokes operator acting on divergence-free vector fields on a
domain with the kinematic and Navier boundary conditions.
Finally, we employ the spectral theory and the necessary estimates
to construct the Galerkin approximate solutions and establish their
convergence to global weak solutions, as well as local strong
solutions, of the initial-boundary problem. Furthermore, we show as
a corollary that,
when the slip length tends to zero, the weak solutions constructed
converge to a solution to the incompressible Navier-Stokes equations
subject to the no-slip boundary condition for almost all time. The
inviscid limit of the strong solutions to the unique solutions of
the initial-boundary value problem with the slip boundary condition
for the Euler equations is also established.
\end{abstract}
\maketitle

\section{Introduction}

We are concerned with solutions of the initial-boundary value
problem of the Navier-Stokes equations for incompressible fluids in
a general domain in $\R^3$ with compact and smooth boundary, subject
to the kinematic boundary condition (i.e. the slip condition) and
the Navier boundary condition. The incompressible fluid flows
are governed by the Navier-Stokes equations:
\begin{equation}
\partial_t u+u\cdot \nabla u=\mu \Delta u-\nabla p, \qquad
\nabla\cdot u=0,  \qquad\qquad x\in\Omega\subset \R^3,
\label{23-n-1}
\end{equation}
where $u$ represents the Eulerian velocity vector field of the fluid
flow,
$p$ is a scalar pressure function
(up to a function of time $t$) which maintains the incompressibility
of the fluid, $\mu$ is the kinematic viscosity, and the density has
been renormalized as one in this setting. As a system of partial
differential equations, $u$ and $p$ are unknown functions, and a
solution $u$ of (\ref{23-n-1}) determines $p$ uniquely up to a
function depending only on $t$. The initial condition for the fluid
flow is
\begin{equation}\label{initial-1}
u|_{t=0}=u_0(x).
\end{equation}

If $\Omega\subset\R^3$ has a non-empty boundary
$\Gamma=\partial\Omega$, then system (\ref{23-n-1}) must be
supplemented with boundary conditions on $u$ in order to be
well-posed. In fluid dynamics, if the rigid surface $\Gamma$ is at
rest, the kinematic and no-slip conditions are often imposed. The
kinematic condition means that the normal component of the velocity
vanishes, that is, the velocity $u$ is tangent to the boundary
$\Gamma$:
\begin{equation}\label{kinematic-1}
u^{\bot}|_{\Gamma}=0,
\end{equation}
while the no-slip condition
demands for the coincidence of the tangent component of the fluid
velocity with that of the boundary $\Gamma$. These two boundary
conditions lead to the Dirichlet boundary problem associated with
the Navier-Stokes equations. There has been a large literature for
the Navier-Stokes equations subject to the Dirichlet boundary
condition; see
\cite{hopf1,kreiss-lorenz1,lad3,leray2,leray3,leray1,serrin63,temam1,wahl1}
and the references cited therein. The fundamental problem of the
global (in time) existence and uniqueness of a strong solution
remains open; however, the Dirichlet boundary problem of the
Navier-Stokes equations
is well-posed at least for a small time, or for small data globally
in time.

However, the usual no-slip assumption does not always match with the
experimental results. Navier \cite{navier-1} first proposed the
slip-with-friction boundary condition, that is, the Navier boundary
condition: The tangent part of the velocity $u$ is proportional to
that of the normal vector field of the stress tensor
with proportional constant $\zeta>0$, which is called the slip
length (see (\ref {23-11-0}) below). In the recent years, the Navier
boundary condition has been received much attention, especially when
fluids
with larger Reynolds number or fluids past a rigid surface with
considerable speeds for which the curvature effect becomes apparent
(cf. \cite{Einzel-Panzer-Liu,IRS,JM1,JM2,lauga-etc,zhu-Garnick}
and the references cited therein). Such boundary conditions can be
induced by effects of free capillary boundaries, a perforated
boundary, or an exterior electric field (cf.
\cite{APV,Bansch,BJ,CDE,QWS,Schwartz}). In particular, this friction
boundary condition was rigorously justified as the effective
boundary condition for flows over rough boundary; see
\cite{JM-1,JM1}. Thus, it becomes important to analyze solutions to
the equations for such fluids subject to the Navier boundary
condition.

The rigorous mathematical analysis of the Navier-Stokes equations
with the kinematic and Navier boundary conditions may date back the
work by Solonnikov-{\v{S}}{\v{c}}adilov \cite{ss} for the stationary
linearized Navier-Stokes system under the boundary condition that
the tangent part of the normal vector field of the stress tensor is
zero. The existence of weak solutions and regularity for the
stationary Navier-Stokes equations with the kinematic and Navier
boundary conditions was only recently obtained by Beir\~{a}o da
Veigt \cite{BV} for the half-space. In a two-dimensional, simply
connected, bounded domain, the well-posedness problem has been
rigorously established by Yodovich \cite{Yo}. See also
Clopeau, Mikeli\'{c}, and Robert \cite{CMR} and Lopes Filho,
Nussenzveig Lopes and Planas \cite{LLP} for the vanishing viscosity
limit, and Mucha \cite{Mucha} under some geometrical constraints on
the shape of the domain. These two-dimensional results are based on
the fact that the vorticity is scalar and satisfies the maximum
principle. However, in the three-dimensional case,
the standard maximum principle for the vorticity fails, so that the
techniques employed in the two-dimensional case can not be directly
extended to this case. Furthermore, the Navier boundary condition
causes additional difficulties in developing apriori estimates which
require to be compatible with the nonlinear convection term. The
main purpose of this paper is to develop a general approach to
establish the well-posedness, the no-slip limit, as well as the
inviscid limit for the initial-boundary value problem for the
Navier-Stokes equations in a general domain $\Omega\subset\R^3$,
subject to the Navier boundary
condition, together with the kinematic condition \eqref{kinematic-1}
and the initial condition \eqref{initial-1}.

By careful local computations in Section 2,
we first reformulate the Navier boundary condition in terms of the
vorticity $\omega=\nabla\times u$ (see Proposition \ref{navier-c}
below). This is motivated by the Hodge theory on manifolds with
boundary, since the kinematic and vorticity conditions are the
natural boundary conditions for the Hodge theory from the viewpoint
of differential geometry. Indeed, the Navier boundary condition
under the kinematic condition \eqref{kinematic-1} is equivalent to
the condition that the tangent portion of the vorticity $\omega$ is
of the following form:
\begin{equation}
\left. \omega^{\Vert}\right|_{\Gamma} =-\frac{1}{\zeta}(\ast
u)+2\big(\ast\pi(u)\big),  \label{nav-f}
\end{equation}
where $\pi =(\pi_{ij})$ is the curvature of the boundary $\Gamma$,
that is, the second fundamental form. The form $\pi$ is identified
with the self-adjoint operator which sends a tangent vector
$(\xi^{1}, \xi^{2})$ on the boundary surface $\Gamma$ to the tangent
vector $(\sum_{j}\pi_{1j}\xi^{j},\sum_{j}\pi_{2j}\xi^{j})$, the
operator $\ast$ is the Hodge star operator which rotates (towards
the interior of the domain $\Omega$) a tangent vector $(\xi^{1},
\xi^{2})$ by 90$^{0}$ degree. The Navier boundary condition in form
(\ref{nav-f}) has appealing physical interpretation:
The vorticity on the boundary $\Gamma$ is created mainly from the
slip of the fluid and the curvature of the boundary $\Gamma$
(see \cite{lauga-etc} for more information and further references on
the slip length for different fluid media). In particular, the
effect of the curvature becomes significant when the curvature of
the boundary $\Gamma$ becomes large (comparable to the reciprocal of
the slip length).

In Section 3, we establish some basic elliptic estimates for vector
fields subject to the kinematic and Navier boundary conditions. In
particular, we establish some $L^{2}$-estimates which are uniform in
the slip length by a careful analysis of several boundary integrals.

In Section 4, we develop a spectral theory of the Stokes operator
acting on divergence-free vector fields in a general domain $\Omega$
subject to the kinematic and Navier boundary conditions.
 We establish several fundamental estimates for
the symmetric form defined by the Stokes operator. Besides the
difficulties caused by the divergence-free condition on the vector
fields,
the Navier boundary condition causes additional difficulties in
developing \emph{apriori} estimates for the Galerkin approximations
to solutions of the Navier-Stokes equations. To overcome these
difficulties, we establish an estimate for the third derivatives of
the vector fields satisfying the Navier boundary condition (Theorem
\ref{le-23-6} and Corollary \ref{co-23-31})
and a uniform gradient estimate for the Galerkin
approximations (Theorem \ref{th-n4}).
Then, in Section 5, we employ the spectral theory and all the
estimates established in Sections 3--4 to construct the Galerkin
approximate solutions and establish the global existence of weak
solutions and the local existence of strong solutions for the
initial-boundary problem \eqref{23-n-1}--\eqref{nav-f}.
Furthermore, we show as a corollary that, for any weak solution
$u_\zeta(t,x)$ corresponding to the slip length $\zeta$ to problem
\eqref{23-n-1}--\eqref{nav-f} constructed in Theorem {\rm
\ref{w-th-01}}, when $\zeta\to 0$, there exists a subsequence (still
denoted) $u_\zeta(t,x)$ converging to $u(t,x)$ such that $u(t,x)$ is
a solution to \eqref{23-n-1} subject to the no-slip condition for
almost all time $t$. On the other hand, when $\zeta\to \infty$,
there also exists a subsequence (still denoted) $u_\zeta(t,x)$
converging to $u(t,x)$ such that $u(t,x)$ is a solution to
\eqref{23-n-1} subject to the complete slip boundary condition:
$$
\left. \omega^{\Vert}\right|_{\Gamma} =2\big(\ast\pi(u)\big),
$$
in the weak sense.
Such a nonhomogeneous vorticity boundary problem has been carefully
investigated in \cite{C-Q1}.

Finally, in Section 6, we study the inviscid limit and establish the
$L^2$--convergence of the strong solutions of problem
\eqref{23-n-1}--\eqref{nav-f} to the unique smooth solution of the
initial-boundary value problem with the slip boundary condition for
the Euler equations for incompressible fluid flows.

\section{The Navier boundary condition}

In this section we reformulate the Navier boundary condition in
terms of the vorticity
for every slip length $\zeta>0$ and introduce several notions and
notations which are used throughout the paper.

For simplicity, we use the conventional notation that the repeated
indices in a formula are understood to be summed up from $1$ to $3$
unless confusion may occur. Furthermore, we use a universal constant
$C>0$ that is independent of the slip length $\zeta>0$, and a
universal constant $M>0$ that may depend on $\zeta$ among others,
which may be different at each occurrence.

We use the same notation for both scalar functions and
vector fields in the $L^{p}$-space and Sobolev spaces
$W^{k,p}(\Omega)$ ($H^k(\Omega)$ if $p=2$).  Denote $\|T\|_{p}$ as
the $L^{p}$-norm of the length $|T|$ of $T$ on $\Omega$ with respect
to the Lebesgue measure, and $\|T\|_{L^{p}(\Gamma)}$ as the
$L^{p}$-norm of the vector field $T$ on the boundary $\Gamma$ with
respect to the induced surface-area measure on $\Gamma$. That is,
\begin{equation*}
\|T\|_{p}=\Big(\int_{\Omega}|T(x)|^{p}\, dx\Big)^{\frac{1}{p}},
\qquad \|T\|_{L^{p}(\Gamma)}=\Big(\int_{\Gamma}|T(x)|^{p}
\,d\mathcal{H}^2(x)\Big)^{\frac{1}{p}},
\end{equation*}
where $dx$ is the usual Lebesgue measure on $\mathbb{R}^{3}$ and
$\mathcal{H}^2$ is the two-dimensional Hausdorff measure (i.e. the
surface area measure) on $\Gamma$. From now on, $dx$ and
$d\mathcal{H}^2(x)$ in the integrals  will be suppressed, unless
confusion may arise. For a vector field $T$ on $\Omega$,
\begin{equation*}
\|T\|_{W^{k,p}}=\Big(\sum_{j=0}^k\int_{\Omega}|\nabla^{j}T|^{p}\Big)^{\frac{1}{p}},
\end{equation*}
where $\nabla^{j}T$ is the $j$-th derivative of $T$. See
\cite{Adams-Fournier03,gri1} for the details.

\smallskip
For the bounded domain $\Omega\subset \R^3$
with a boundary $\Gamma=\partial\Omega$ that is smooth, compact, and
oriented, unless otherwise specified, we carry out local
computations on the boundary
in a moving frame compatible to $\Gamma$. More precisely, if $\nu$
is the unit normal to $\Gamma$ pointing outwards with respect to
$\Omega$, by a moving frame we mean any local orthonormal basis
$(e_{1}, e_2, e_{3})$
of the tangent space $T\Omega$ such that $e_{3}=\nu$ when restricted
to $\Gamma$. If $u=\sum_{j=1}^3u^{j}e_{j}$ is a vector field on
$\Omega$, then, restricted to the boundary surface $\Gamma$,
$u^{\Vert}=\sum_{j=1,2}u^{j}e_{j}$ (resp. $u^{\bot }=u^{3}\nu$)
denotes its tangent part (resp. normal part). The Christoffel
symbols $\Gamma_{ij}^{l}$ are determined by the directional
derivatives $\nabla_{i}e_{j}=\Gamma_{ij}^{k}e_{k}$, where
$\nabla_{i}$ is the directional derivative in the direction $e_{i}$.

The tensor $(\pi_{ij})_{1\le i,j\leq 2}$, where
$\pi_{ij}=-\Gamma_{ij}^{3}$ for $i,j=1,2$, is a symmetric tensor on
$\Gamma$, which is the second fundamental form, denoted by $\pi$.
That is,
\begin{equation*}
\pi(u^{\Vert}, v^{\Vert}):=\sum_{i,j=1,2}\pi_{ij}u^{i}v^{j}\qquad
\text{for any } u^{\Vert}, v^{\Vert}\in T\Gamma \text{.}
\end{equation*}
We say that $\pi$ is bounded above (resp. below) by a constant
$\lambda$ if two eigenvalues (which are functions on $\Gamma$) are
bounded above (resp. below) by $\lambda$. We will also identify
$\pi$ with the linear transformation $(h_{ij})$: If
$u^{\Vert}=\sum_{j=1,2}u^{j}e_{j}$ is tangent to $\Gamma$, then
\begin{equation*}
\pi (u^{\Vert}):=\sum_{j=1,2}\pi (u^{\Vert})^{j}e_{j}=\sum_{j=1,2}
{\pi}_{ij}u^{i}e_{j},
\end{equation*}
with $\langle \pi(u^{\Vert}), v^{\Vert}\rangle =\pi(u^{\Vert},
v^{\Vert})$, and $H=\sum_{j=1,2}{\pi}_{jj}$ is the mean curvature. We
refer to \cite{Palais-Terng,derham1} for further facts and notations
in differential geometry used in this paper.

The boundary surface $\Gamma\subset\R^{3}$ has a natural induced
metric and hence a natural notion of directional derivatives, the
L\'{e}vi-Civita connection, denoted by $\nabla^{\Gamma}$. The
following formulas will be useful in treating with integrals on the
boundary $\Gamma$. Let $u\in H^{2}(\Omega)$ be a vector field on
$\Omega$.
Then, on $\Gamma$,
\begin{eqnarray}
\langle u\cdot\nabla u, \nu\rangle&=&-\pi
(u^{\Vert},u^{\Vert})-H|u^{\bot}|^{2}+\langle u,\nu\rangle\nabla\cdot u  \notag \\
&&+2\langle u^{\Vert}, \nabla^{\Gamma}\langle u,\nu \rangle \rangle
-\nabla^{\Gamma}\cdot\big(\langle u,\nu \rangle u^{\Vert}\big),
\label{june21-045}
\end{eqnarray}
and
\begin{equation}
\frac{1}{2}\partial_\nu (|u|^{2})=\langle u\times (\nabla \times
u),\nu \rangle + \langle u\cdot\nabla u,\nu \rangle.
\label{june-063}
\end{equation}
The first formula (\ref{june21-045}) may be verified by means of the
moving frame method (see \cite{C-Q1} for the details). The second
follows from the vector identity:
\begin{equation}
\frac{1}{2}\nabla |u|^{2}=u\times(\nabla \times u)+u \cdot\nabla u.
\label{v-24-1}
\end{equation}

Let $f$ be a scalar function on $\Omega $. Then, as a special case
of (\ref {june-063}),
\begin{eqnarray}
\partial_\nu\big(|\nabla f|^{2}\big)&=&-2\pi(\nabla f^{\Vert},\nabla f^{\Vert})
-2H\left|\partial_\nu f\right|^{2}+
\partial_\nu f\Delta f  \notag \\
&&+\, 4\langle \nabla f^{\Vert},\nabla^{\Gamma}(\partial_\nu
f)\rangle -2\nabla ^{\Gamma}\cdot\big(\partial_\nu
f\nabla^{\Gamma}f^{\Vert}\big)\text{.}  \label{05-01}
\end{eqnarray}

The connecting condition over an interface $\Gamma $ of a fluid is
expressed as \emph{the Navier boundary condition}; see
Einzel-Panzer-Liu \cite{Einzel-Panzer-Liu} for its physical
interpretation. This condition may be written in a moving frame
compatible to $\Gamma$ as follows:
\begin{equation}
u^{k}=-\zeta\left(\nabla_{3}u^{k}+\nabla_{k}u^{3}\right)\qquad
\text{on}\,\, \Gamma \qquad\text{for $k=1,2$}, \label{23-11-0}
\end{equation}
where $\zeta$ is the slip length that is a positive scalar function
on $\Gamma$ depending only on the nature of the fluid and the
material of the rigid boundary. In order to write down
\eqref{23-11-0}
in a global form in terms of the vorticity and the curvature of
$\Gamma$, we recall that the Hodge operator $\ast$ sends a vector
field $(v^{1}, v^{2})$ on the surface $\Gamma$ to
$\ast(v^1,v^2):=(-v^{2}, v^{1})$. The effect of the Hodge operator
$\ast$ is to rotate a vector on $\Gamma $ by 90$^{0}$ degree with
respect to the normal vector pointing the interior of $\Omega$. The
Hodge operator $\ast$ is independent of the choice of a moving frame
on $\Gamma$ and may be defined via the identity:
\begin{equation}
\langle w\times(\ast u^{\Vert}), \nu\rangle =\langle u^{\Vert},
w^{\Vert}\rangle \qquad \text{on}\,\, \Gamma \label{id-30-01}
\end{equation}
for any vector fields $u$ and $w$.

\begin{proposition}\label{navier-c}
Let $u\in C^1(\Omega)$. Then $u$ satisfies the Navier boundary
condition on $\Gamma$ if and only if
\begin{equation}
\left(\nabla \times u\right)^{\Vert}\Big|_{\Gamma}
=-\frac{1}{\zeta}(\ast u^{\Vert})-2\big(\ast\nabla^{\Gamma}\langle
u,\nu \rangle\big) +2\big(\ast\pi(u^{\Vert})\big).
\label{n01-teq1}
\end{equation}
In terms of the components in a moving frame compatible to the
boundary, condition \eqref{n01-teq1} takes the following forms:
\begin{eqnarray*}
&&\left(\nabla\times u\right)^{1}
=\frac{1}{\zeta}u^{2}+2\nabla_{2}\langle u,\nu \rangle -2\sum_{j=1,2}\pi_{j2}u^{j},\\
&&\left( \nabla \times
u\right)^{2}=-\frac{1}{\zeta}u^{1}-2\nabla_{1}\langle u,\nu \rangle
+2\sum_{j=1,2}\pi_{j1}u^{j}.
\end{eqnarray*}
In particular, if $u$ satisfies the kinematic condition
\eqref{kinematic-1},
then the Navier condition \eqref{23-11-0} is equivalent to
\eqref{nav-f}.
\end{proposition}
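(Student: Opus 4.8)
The plan is to work in a positively oriented orthonormal moving frame $(e_1,e_2,e_3)$ compatible with $\Gamma$ — so $e_3=\nu$ on $\Gamma$ — and to compute the tangential components of $\nabla\times u$ directly. Recall that in such a frame $(\nabla\times u)^1=\nabla_2 u^3-\nabla_3 u^2$ and $(\nabla\times u)^2=\nabla_3 u^1-\nabla_1 u^3$, where $\nabla_i u^j=e_i(u^j)+\Gamma_{ik}^j u^k$. The only structural facts needed about the frame along $\Gamma$ are that $\Gamma_{i3}^3=\langle\nabla_i e_3,e_3\rangle=0$ for $i=1,2$ (since $|e_3|\equiv1$) and that $\Gamma_{ij}^3=-\pi_{ij}$ for $i,j=1,2$, by the definition of the second fundamental form. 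Moreover, since $e_1,e_2$ are tangent to $\Gamma$ and $u^3=\langle u,\nu\rangle$ there, the derivatives $e_i(u^3)$ restricted to $\Gamma$ coincide with the intrinsic derivatives $\nabla^{\Gamma}_{i}\langle u,\nu\rangle$. Combining these gives, for every $u\in C^1(\Omega)$ and $i=1,2$, the identity $\nabla_i u^3\big|_{\Gamma}=\nabla^{\Gamma}_{i}\langle u,\nu\rangle-\sum_{j=1,2}\pi_{ij}u^j$, valid whether or not $u$ satisfies any boundary condition.

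Next I would insert the Navier condition \eqref{23-11-0}, which in the moving frame reads $\nabla_3 u^k+\nabla_k u^3=-\frac{1}{\zeta}u^k$ for $k=1,2$. Using it to eliminate $\nabla_3 u^2$ from $(\nabla\times u)^1$ and $\nabla_3 u^1$ from $(\nabla\times u)^2$ yields $(\nabla\times u)^1=2\nabla_2 u^3+\frac{1}{\zeta}u^2$ and $(\nabla\times u)^2=-2\nabla_1 u^3-\frac{1}{\zeta}u^1$; substituting the identity for $\nabla_i u^3\big|_{\Gamma}$ then produces exactly the two component equations in the statement. This chain of equalities is reversible: the step from $(\nabla\times u)^1=\nabla_2 u^3-\nabla_3 u^2$ to $(\nabla\times u)^1=2\nabla_2 u^3+\frac{1}{\zeta}u^2$ is an equivalence once the unconditional identity for $\nabla_2 u^3$ is used, and similarly for the second component, so the component equations in turn imply $\nabla_3 u^k+\nabla_k u^3=-\frac{1}{\zeta}u^k$. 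Hence the equivalence in the proposition is genuine.

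To recast the pair of component equations in the coordinate-free form \eqref{n01-teq1}, I would recognize the Hodge rotation $\ast(v^1,v^2)=(-v^2,v^1)$: the right-hand sides of the two equations are, componentwise, $-\frac{1}{\zeta}(\ast u^{\Vert})-2(\ast\nabla^{\Gamma}\langle u,\nu\rangle)+2(\ast\pi(u^{\Vert}))$, where the symmetry of $\pi$ lets one write $(\pi(u^{\Vert}))^j=\sum_i\pi_{ij}u^i=\sum_i\pi_{ji}u^i$. Independence of the choice of frame is then guaranteed by the intrinsic characterization \eqref{id-30-01} of $\ast$. Finally, if $u$ also satisfies the kinematic condition \eqref{kinematic-1}, then $\langle u,\nu\rangle\equiv0$ on $\Gamma$, so $\nabla^{\Gamma}\langle u,\nu\rangle=0$ and $u^{\Vert}=u$ there, whence \eqref{n01-teq1} collapses to \eqref{nav-f}.

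The proof carries no deep difficulty — it is bookkeeping in the moving-frame formalism. The points that genuinely require care are the orientation conventions (which tangential component of the curl carries which sign, and the fact that $\ast$ is the rotation towards the interior, consistent with $e_3=\nu$ pointing outward), the correct identification of the connection coefficients ($\Gamma_{i3}^3=0$ together with the sign in $\Gamma_{ij}^3=-\pi_{ij}$), and the observation that $e_i(u^3)$ may be replaced by $\nabla^{\Gamma}_{i}\langle u,\nu\rangle$ on $\Gamma$ precisely because $e_1,e_2$ are tangent there.
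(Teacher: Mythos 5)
Your proposal is correct and follows essentially the same route as the paper's proof: working in a moving frame compatible with $\Gamma$, using the identity $\nabla_k u^3\big|_{\Gamma}=e_k\langle u,\nu\rangle-\sum_{j=1,2}\pi_{jk}u^j$ together with the curl components $\nabla_3u^k-\nabla_ku^3=\varepsilon_{3ka}\omega^a$, and then substituting the Navier condition \eqref{23-11-0}. Your write-up is slightly more explicit about the reversibility of the equivalence and about why $\Gamma_{i3}^3=0$, but these are presentation differences, not a different argument.
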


\begin{proof}
It suffices to show the results in a moving frame compatible to the
boundary surface $\Gamma$. Then, for $k=1,2$,
\begin{eqnarray*}
\nabla_{k}u^{3}=e_{k}(u^{3})+\sum_{j=1,2}\Gamma_{kj}^{3}u^{j}
=e_{k}\langle u,\nu \rangle -\sum_{j=1,2} \pi_{jk}u^{j}.
\end{eqnarray*}
Therefore, along the boundary surface $\Gamma$,
\begin{equation*}
\nabla_{3}u^{k}+\nabla_{k}u^{3}=\varepsilon_{3ka}\omega^{a}
 +2e_{k}\langle u,\nu \rangle -2\sum_{j=1,2}\pi_{jk}u^{j}
 \qquad\,\,\, \text{for any $k=1,2$},
\end{equation*}
so that
\begin{equation*}
\left.
\omega^{a}\right|_{\Gamma}=-\frac{1}{\zeta}\varepsilon_{3ka}u^{k}
-2\varepsilon_{3ka}e_{k}\langle u,\nu \rangle
+2\varepsilon_{3ka}\sum_{j=1,2}\pi_{jk}u^{j} \qquad\,\,\, \text{for
$a=1,2$},
\end{equation*}
where $\varepsilon_{ijk}$ is the Kronecker symbols. This completes
the proof.
\end{proof}

\begin{definition}
Let $\zeta >0$ be a constant. Then a vector field $u$ on $\Omega$ is
said to satisfy the Navier's $\zeta$-condition if \eqref{nav-f}
holds,
which is equivalent to
\begin{equation}
\left(\nabla\times u\right)^{1}=\frac{1}{\zeta}u^{2}
-2\sum_{j=1,2}\pi_{j2}u^{j}, \qquad \left(\nabla\times
u\right)^{2}=-\frac{1}{\zeta}u^{1} +2\sum_{j=1,2}\pi_{j1}u^{j}
\label{cceq01}
\end{equation}
in a moving frame compatible to the boundary surface $\Gamma$.
\end{definition}

In this paper we study the initial-boundary problem
\eqref{23-n-1}--\eqref{nav-f} for the Navier-Stokes equations
with fixed constants $\mu >0$ and $\zeta>0$.

\section{Elliptic estimates for vector fields}

The fundamental estimates in the standard elliptic theory state
that, for any function $f$ on $\Omega$ subject to a certain boundary
condition (Dirichlet or Neumann),
$\|f\|_{H^{2}}$ is dominated by the $L^{2}$-norm of its Laplacian
together with its $L^{2}$-norm:
\begin{equation*}
\|f\|_{H^{2}}\leq C(\|\Delta f\|_{2}+\|f\|_{2})
\end{equation*}
for some constant $C$ depending only on $\Omega$. A correct boundary
condition here plays an essential role, and the previous estimate
can not be true without a proper boundary condition. A version of
elliptic estimates for vector fields has been established in
\cite{agmon1965} (also see \cite{morrey1966}) for vector fields
satisfying the Dirichlet or Neumann condition. If $u$ is a vector
field in $H^{2}(\Omega)$ such that $\left.
u^{\bot}\right|_{\Gamma_{1}}=0$ and $\left. u\right|_{\Gamma_{2}}=0$
for $\Gamma =\Gamma_{1}\cup\Gamma_{2}$, then
\begin{equation}
\|u\|_{H^{1}(\Omega)}\leq C\left(\|\nabla\times u\|_{2}
+\|\nabla\cdot u\|_{2}+ \|u\|_{2}\right), \label{07june06-01}
\end{equation}
which is a special case of a general result in \cite{agmon1965}.

For our problem, we need to develop the $L^{2}$-estimates for
divergence-free vector fields that satisfy the kinematic condition
\eqref{kinematic-1} and Navier's $\zeta$-condition \eqref{nav-f} in
a general domain $\Omega$. To our knowledge, these estimates are not
covered in the previous literature, although they can be considered
as a part of the standard elliptic theory.

We begin with an elementary lemma which implies (\ref{07june06-01})
and may be verified by means of integration by parts.

\begin{lemma}\label{le-66-1}
Let $u\in H^2(\Omega)$ be a vector field on $\Omega$. Then
\begin{equation}
\int_{\Omega}\langle \Delta u,u\rangle =-\|\nabla
u\|_{2}^{2}+\frac{1}{2}\int_{\Gamma} \partial_\nu(|u|^{2})
\label{07june03-03}
\end{equation}
and
\begin{eqnarray}
\int_{\Omega}|\nabla u|^{2}=\|\nabla\times u\|_{2}^{2}
+\|\nabla\cdot u\|_{2}^{2}-\int_{\Gamma}\left(\nabla\cdot u\right)
\langle u, \nu\rangle +\int_{\Gamma}\langle u\cdot \nabla u,\nu
\rangle.
\label{t-d-01}
\end{eqnarray}
In particular, if $\left. u^{\bot}\right|_{\Gamma}=0$, then
\begin{equation}
\|\nabla u\|_{2}^{2}=\|\nabla\times u\|_{2}^{2}+\|\nabla\cdot
u\|_{2}^{2}-\int_{\Gamma}\pi \left(u, u\right).
\label{est-june26-01a}
\end{equation}
\end{lemma}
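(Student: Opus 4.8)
The plan is to prove all three identities by elementary integration by parts in Cartesian coordinates, invoking the differential-geometric formula \eqref{june21-045} only at the last step; the hypothesis $u\in H^2(\Omega)$ is precisely what makes every trace and every use of the divergence theorem below legitimate. For \eqref{07june03-03} I would write $u=(u^1,u^2,u^3)$ in Cartesian components, apply Green's first identity to each scalar component, $\int_\Omega(\Delta u^i)\,u^i=-\int_\Omega|\nabla u^i|^2+\int_\Gamma u^i\,\partial_\nu u^i$, and sum over $i$; since $\sum_i|\nabla u^i|^2=|\nabla u|^2$ and $\sum_i u^i\,\partial_\nu u^i=\tfrac12\,\partial_\nu\big(\sum_i (u^i)^2\big)=\tfrac12\,\partial_\nu(|u|^2)$, this is exactly \eqref{07june03-03}.

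For \eqref{t-d-01} the engine is the pointwise algebraic identity in $\R^3$,
\[
|\nabla\times u|^2+(\nabla\cdot u)^2=|\nabla u|^2+\nabla\cdot\big((\nabla\cdot u)\,u-u\cdot\nabla u\big),
\]
which I would verify by expanding $|\nabla\times u|^2=\partial_j u^k\,\partial_j u^k-\partial_j u^k\,\partial_k u^j$ using $\varepsilon_{ijk}\varepsilon_{ilm}=\delta_{jl}\delta_{km}-\delta_{jm}\delta_{kl}$ and then noting that the cross term rearranges as $\partial_i u^i\,\partial_j u^j-\partial_i u^j\,\partial_j u^i=\partial_i\big(u^i\,\nabla\cdot u\big)-\partial_j\big((u\cdot\nabla u)^j\big)$, the second-derivative contributions cancelling. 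Integrating over $\Omega$ and applying the divergence theorem then gives $\|\nabla\times u\|_2^2+\|\nabla\cdot u\|_2^2=\|\nabla u\|_2^2+\int_\Gamma(\nabla\cdot u)\langle u,\nu\rangle-\int_\Gamma\langle u\cdot\nabla u,\nu\rangle$, which is \eqref{t-d-01} after transposing. (Alternatively one could combine \eqref{07june03-03} with $\Delta u=\nabla(\nabla\cdot u)-\nabla\times(\nabla\times u)$ and the vector identity \eqref{v-24-1}, but the direct route is shorter.)

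Finally, for \eqref{est-june26-01a} I would impose $u^{\bot}|_\Gamma=0$, i.e.\ $\langle u,\nu\rangle\equiv 0$ on $\Gamma$, in \eqref{t-d-01}: the first boundary integral vanishes outright, and in the second I substitute \eqref{june21-045}. Because $\langle u,\nu\rangle$ is identically zero along $\Gamma$, so is its tangential gradient $\nabla^\Gamma\langle u,\nu\rangle$, so every term on the right-hand side of \eqref{june21-045} except $-\pi(u^\Vert,u^\Vert)$ disappears (the surface-divergence term integrates to zero over the closed surface $\Gamma$ in any case), leaving $\langle u\cdot\nabla u,\nu\rangle=-\pi(u^\Vert,u^\Vert)=-\pi(u,u)$ on $\Gamma$ and hence \eqref{est-june26-01a}. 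None of this is deep; the only steps needing a little care are the index bookkeeping that yields the algebraic identity for \eqref{t-d-01} and the observation that it is the \emph{identical} vanishing of $\langle u,\nu\rangle$ on $\Gamma$---not merely a pointwise condition---that kills the tangential-derivative terms of \eqref{june21-045}.
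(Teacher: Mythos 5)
Your proof is correct: the componentwise Green identity, the pointwise identity $|\nabla\times u|^2+(\nabla\cdot u)^2=|\nabla u|^2+\nabla\cdot\big((\nabla\cdot u)u-u\cdot\nabla u\big)$, and the reduction of the boundary term via \eqref{june21-045} when $\langle u,\nu\rangle\equiv 0$ all check out. The paper offers no detailed argument—it only remarks that the lemma "may be verified by means of integration by parts"—and your computation is exactly that verification, so the approaches coincide.
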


\begin{lemma}\label{le-66-02}
If $g$ is a smooth function on $\Omega$ (up to the boundary
$\Gamma$), then
\begin{eqnarray}
\|\nabla^2g\|_{2}^{2}=\|\Delta g\|_{2}^{2}-\int_{\Gamma}\pi ((\nabla
g)^{\Vert}, (\nabla g)^{\Vert}) -\int_{\Gamma}H\left|\partial_\nu
g\right|^{2} +2\int_{\Gamma}\langle \nabla^{\Gamma}g,
\nabla^{\Gamma}(\partial_\nu g)\rangle. \label{4-30-11}
\end{eqnarray}
\end{lemma}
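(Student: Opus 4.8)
The plan is to expand $\|\Delta g\|_2^2 = \int_\Omega (\Delta g)^2$ by writing $\Delta g = \sum_i \partial_i\partial_i g$ and integrating by parts twice to convert it into $\|\nabla^2 g\|_2^2 = \sum_{i,j}\int_\Omega (\partial_i\partial_j g)^2$ plus boundary terms, then to identify those boundary terms using the geometric formula \eqref{05-01} already recorded in Section 2. Concretely, I would start from
\[
\int_\Omega \partial_i\partial_i g \,\partial_j\partial_j g
= \int_\Gamma \nu_j\, \partial_i\partial_i g\, \partial_j g
 - \int_\Omega \partial_j\partial_i\partial_i g \,\partial_j g,
\]
and then integrate by parts once more in the volume term, moving a $\partial_i$ off $\partial_i\partial_j\partial_j g \cdot \partial_j g$... more cleanly: apply the divergence theorem to $\int_\Omega \partial_i(\partial_i g\,\Delta g) - \int_\Omega \partial_i g\, \partial_i \Delta g$ and, symmetrically, to $\int_\Omega \partial_j(\partial_i g\,\partial_i\partial_j g) - \int_\Omega \partial_i g\,\partial_j\partial_i\partial_j g$; since mixed partials commute, the two volume remainders $\int_\Omega \partial_i g\,\partial_i\Delta g$ and $\int_\Omega \partial_i g\,\partial_j\partial_i\partial_j g$ coincide, and subtracting the two identities yields
\[
\|\Delta g\|_2^2 - \|\nabla^2 g\|_2^2
= \int_\Gamma \big(\Delta g\,\partial_\nu g - \langle \nabla^\Gamma g + (\partial_\nu g)\nu,\ \nabla_\nu(\nabla g)\rangle\big),
\]
i.e. a single boundary integral $\int_\Gamma \big(\Delta g\,\partial_\nu g - \langle \nabla g,\ \partial_\nu(\nabla g)\rangle\big) = \int_\Gamma \big(\Delta g\,\partial_\nu g - \tfrac12\,\partial_\nu(|\nabla g|^2)\big)$, upon recognizing $\langle \nabla g,\partial_\nu(\nabla g)\rangle = \tfrac12\partial_\nu(|\nabla g|^2)$.

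At this stage I would invoke \eqref{05-01} with $f=g$, which expresses $\partial_\nu(|\nabla g|^2)$ as
\[
-2\pi(\nabla g^{\Vert},\nabla g^{\Vert}) - 2H|\partial_\nu g|^2 + \partial_\nu g\,\Delta g + 4\langle\nabla g^{\Vert},\nabla^\Gamma(\partial_\nu g)\rangle - 2\nabla^\Gamma\!\cdot\!\big(\partial_\nu g\,\nabla^\Gamma g^{\Vert}\big).
\]
Substituting, the $\Delta g\,\partial_\nu g$ terms combine as $\partial_\nu g\,\Delta g - \tfrac12\,\partial_\nu g\,\Delta g = \tfrac12\,\partial_\nu g\,\Delta g$; wait—I must track coefficients carefully, but after the dust settles the $\Delta g$ contributions cancel exactly (this is the point of the computation), leaving
\[
\|\Delta g\|_2^2 - \|\nabla^2 g\|_2^2
= -\int_\Gamma \pi(\nabla g^{\Vert},\nabla g^{\Vert}) - \int_\Gamma H|\partial_\nu g|^2 + 2\int_\Gamma \langle\nabla^\Gamma g,\nabla^\Gamma(\partial_\nu g)\rangle - \int_\Gamma \nabla^\Gamma\!\cdot\!\big(\partial_\nu g\,\nabla^\Gamma g^{\Vert}\big),
\]
where I have used $\nabla g^{\Vert} = \nabla^\Gamma g$ on $\Gamma$. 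The last term vanishes by the divergence theorem on the closed surface $\Gamma$ (no boundary), giving exactly \eqref{4-30-11} after rearranging.

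The main obstacle is bookkeeping rather than conceptual: one must be scrupulous about which tangential vector field appears in $\langle \nabla g,\partial_\nu(\nabla g)\rangle$ versus in \eqref{05-01}, since $\nabla g$ decomposes on $\Gamma$ as $\nabla^\Gamma g + (\partial_\nu g)\nu$ and the normal derivative $\partial_\nu$ does not commute with the tangential/normal splitting (the Christoffel symbols, i.e. the second fundamental form, enter precisely here). The cleanest route is to avoid expanding $\partial_\nu(\nabla g)$ by hand and instead feed $\tfrac12\partial_\nu(|\nabla g|^2)$ directly into \eqref{05-01}; then the only nontrivial verification is the cancellation of the $\Delta g$ boundary terms and the identification of the tangential-divergence total-derivative term, whose integral over the closed surface $\Gamma$ is zero. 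I would also note that the smoothness hypothesis on $g$ up to $\Gamma$ is used only to justify the integrations by parts and the divergence theorem on $\Gamma$; by density the identity persists for $g\in H^2(\Omega)$ with $\Delta g\in L^2$ and suitable boundary regularity, though the statement as given suffices for later use.
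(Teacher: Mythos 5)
Your proof is correct and is essentially the route the paper intends: its one-line proof via the Bochner identity \eqref{Bochner}, integrated over $\Omega$, produces exactly your intermediate identity $\|\nabla^2 g\|_2^2 = \|\Delta g\|_2^2 - \int_\Gamma \Delta g\,\partial_\nu g + \tfrac12\int_\Gamma\partial_\nu(|\nabla g|^2)$, after which one substitutes \eqref{05-01} and discards the tangential divergence term over the closed surface $\Gamma$. Your hesitation about the coefficients is resolved by observing that \eqref{05-01} as printed carries a typo---applying \eqref{june21-045} to $u=\nabla f$ and doubling yields $2\,\partial_\nu f\,\Delta f$ rather than $\partial_\nu f\,\Delta f$---and with the corrected coefficient the $\Delta g\,\partial_\nu g$ boundary contributions cancel exactly, as you asserted.
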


This elementary fact can be  proved by using integration by parts
and the Bochner identity:
\begin{equation}\label{Bochner}
|\nabla^2 g|^{2}=\frac{1}{2}\Delta |\nabla g |^{2}-\langle \nabla
\Delta g, \nabla g \rangle.
\end{equation}

\smallskip
Now we are in a position to prove our first main estimate, an
elliptic estimate, for vector fields satisfying
\eqref{kinematic-1}--\eqref{nav-f}.

\begin{theorem}
\label{th-66-1}There exists a constant $C$ depending only on
$\Omega$ such that
\begin{equation}
\|\nabla^2 u\|_{2}^{2}
+\frac{1}{\zeta}\|\nabla^{\Gamma}u\|_{L^{2}(\Gamma)}^{2} \leq
C\left( \|\Delta u\|_{2}^{2}+\|u\|_{H^{1}}^{2}\right)
\label{in-5-04}
\end{equation}
for any vector field $u$ satisfying \eqref{kinematic-1}--\eqref{nav-f}.
\end{theorem}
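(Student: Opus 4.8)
The plan is to reduce the vector estimate to the scalar Bochner identity of Lemma~\ref{le-66-02} applied componentwise, and then to absorb the resulting boundary integrals by using the kinematic condition \eqref{kinematic-1} and the Navier condition \eqref{nav-f} in complementary ways; it suffices to prove the estimate for $u$ smooth up to $\Gamma$, the general case following by density. Writing $u=(u^{1},u^{2},u^{3})$ in Cartesian coordinates, we have $\|\nabla^{2}u\|_{2}^{2}=\sum_{i}\|\nabla^{2}u^{i}\|_{2}^{2}$ and $\|\Delta u\|_{2}^{2}=\sum_{i}\|\Delta u^{i}\|_{2}^{2}$, so summing \eqref{4-30-11} over $i$ gives the exact identity
\begin{equation*}
\|\nabla^{2}u\|_{2}^{2}=\|\Delta u\|_{2}^{2}-\int_{\Gamma}\mathcal{A}-\int_{\Gamma}\mathcal{B}+2\int_{\Gamma}\mathcal{C},
\end{equation*}
where $\mathcal{A}=\sum_{i}\pi\big((\nabla u^{i})^{\Vert},(\nabla u^{i})^{\Vert}\big)$, $\mathcal{B}=H\,|\partial_{\nu}u|^{2}$, and $\mathcal{C}=\sum_{i}\langle\nabla^{\Gamma}u^{i},\nabla^{\Gamma}(\partial_{\nu}u^{i})\rangle$; here $\partial_{\nu}u=\nabla_{\nu}u$ is the ambient directional derivative of $u$ along $\nu$ (with Cartesian components $\partial_{\nu}u^{i}$) and $\nabla^{\Gamma}u^{i}=(\nabla u^{i})^{\Vert}$. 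Since $\Gamma$ is smooth and compact, $|\mathcal{A}|+|\mathcal{B}|\le C|\nabla u|^{2}$ on $\Gamma$ (using $|(\nabla u^{i})^{\Vert}|\le|\nabla u^{i}|$ and $|\partial_{\nu}u|\le|\nabla u|$), so $\big|\int_{\Gamma}\mathcal{A}\big|+\big|\int_{\Gamma}\mathcal{B}\big|\le C\int_{\Gamma}|\nabla u|^{2}$, and it remains to handle the cross term $2\int_{\Gamma}\mathcal{C}$ --- the only place where a ``lost derivative'' threatens, since $\nabla^{\Gamma}(\partial_{\nu}u)$ is a tangential derivative of a normal derivative of $u$.

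The idea is to split $\partial_{\nu}u=(\partial_{\nu}u)^{\Vert}+\langle\partial_{\nu}u,\nu\rangle\,\nu$ on $\Gamma$ and treat the two parts differently, using two structural facts. First, by \eqref{kinematic-1} one has $u=u^{\Vert}$ and $\langle u,\nu\rangle\equiv0$ on $\Gamma$, hence for $X\in T\Gamma$ the tangential derivative satisfies $\nabla^{\Gamma}_{X}u=\nabla^{\Gamma}_{X}u^{\Vert}$ on $\Gamma$, while its normal component $\langle\nabla^{\Gamma}_{X}u,\nu\rangle=-\langle u^{\Vert},\nabla^{\Gamma}_{X}\nu\rangle=-\pi(X,u^{\Vert})$ is of zeroth order in $u$ with smooth bounded coefficients. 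Second, combining \eqref{kinematic-1} with \eqref{nav-f} in the form \eqref{23-11-0} (equivalent by Proposition~\ref{navier-c}) and with $\nabla_{k}u^{3}=-\sum_{j}\pi_{jk}u^{j}$ on $\Gamma$ (which follows from \eqref{kinematic-1} and the computation in the proof of that proposition), one gets the purely algebraic identity
\begin{equation*}
(\partial_{\nu}u)^{\Vert}=-\frac{1}{\zeta}\,u^{\Vert}+\pi(u^{\Vert})\qquad\text{on }\Gamma.
\end{equation*}

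Substituting this into the tangential contribution, and using $\nabla^{\Gamma}u=\nabla^{\Gamma}u^{\Vert}$ together with the fact that $\zeta$ is constant, one obtains
\begin{equation*}
2\int_{\Gamma}\big\langle\nabla^{\Gamma}u,\nabla^{\Gamma}\big((\partial_{\nu}u)^{\Vert}\big)\big\rangle=-\frac{2}{\zeta}\int_{\Gamma}|\nabla^{\Gamma}u|^{2}+2\int_{\Gamma}\big\langle\nabla^{\Gamma}u,\nabla^{\Gamma}(\pi(u^{\Vert}))\big\rangle,
\end{equation*}
where the last integral is $\le C\int_{\Gamma}(|\nabla u|^{2}+|u|^{2})$ because $|\nabla^{\Gamma}(\pi(u^{\Vert}))|\le C(|u|+|\nabla u|)$; this is precisely where the favorable term $-\tfrac{2}{\zeta}\|\nabla^{\Gamma}u\|_{L^{2}(\Gamma)}^{2}$ is produced. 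For the normal part, set $\phi=\langle\partial_{\nu}u,\nu\rangle$ (so $|\phi|\le|\nabla u|$ on $\Gamma$) and expand $\nabla^{\Gamma}_{X}(\phi\nu)=(X\phi)\nu+\phi\,\nabla^{\Gamma}_{X}\nu$; pairing with $\nabla^{\Gamma}_{X}u$ and invoking the first fact, the only piece carrying a second derivative of $u$ is $\int_{\Gamma}\langle\nabla^{\Gamma}\phi,A\rangle$ with $A=-\pi(u^{\Vert})$ a tangent field linear in $u$ with smooth bounded coefficients; one integration by parts along the closed surface $\Gamma$ rewrites it as $-\int_{\Gamma}\phi\,(\nabla^{\Gamma}\!\cdot A)$, which is again $\le C\int_{\Gamma}(|\nabla u|^{2}+|u|^{2})$, and the leftover term $\phi\,\langle\nabla^{\Gamma}u,\nabla^{\Gamma}\nu\rangle$ is bounded the same way.

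Collecting everything, $\|\nabla^{2}u\|_{2}^{2}+\tfrac{2}{\zeta}\|\nabla^{\Gamma}u\|_{L^{2}(\Gamma)}^{2}\le\|\Delta u\|_{2}^{2}+C\int_{\Gamma}(|\nabla u|^{2}+|u|^{2})$ with $C$ depending only on $\Omega$. Finally, the multiplicative trace inequality $\|v\|_{L^{2}(\Gamma)}^{2}\le C\|v\|_{L^{2}(\Omega)}\|v\|_{H^{1}(\Omega)}$ applied with $v=\nabla u$ and with $v=u$, together with Young's inequality, gives $C\int_{\Gamma}(|\nabla u|^{2}+|u|^{2})\le\tfrac{1}{2}\|\nabla^{2}u\|_{2}^{2}+C\|u\|_{H^{1}}^{2}$; absorbing $\tfrac{1}{2}\|\nabla^{2}u\|_{2}^{2}$ into the left-hand side and using $\tfrac{1}{\zeta}\le\tfrac{2}{\zeta}$ yields \eqref{in-5-04}. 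I expect the main obstacle to be exactly the cross term $2\int_{\Gamma}\mathcal{C}$: one must exploit the Navier condition to pin down $(\partial_{\nu}u)^{\Vert}$ and thereby \emph{produce} the boundary norm $-\tfrac{2}{\zeta}\|\nabla^{\Gamma}u\|_{L^{2}(\Gamma)}^{2}$ with the correct sign, while simultaneously using the kinematic condition to kill the tangential derivative landing on the uncontrolled normal part $\langle\partial_{\nu}u,\nu\rangle\nu$ via a single integration by parts along $\Gamma$, all the while keeping the lower-order boundary terms uniform in $\zeta$.
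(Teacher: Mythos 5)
Your proposal is correct and follows essentially the same route as the paper: both start from the componentwise Bochner identity of Lemma~\ref{le-66-02}, isolate the cross term $2\int_{\Gamma}\langle\nabla^{\Gamma}u^{k},\nabla^{\Gamma}(\partial_{\nu}u^{k})\rangle$, and use the kinematic and Navier conditions to replace $(\partial_{\nu}u)^{\Vert}$ by $-\tfrac{1}{\zeta}u^{\Vert}+\pi(u^{\Vert})$, which produces the favorable term $-\tfrac{2}{\zeta}\|\nabla^{\Gamma}u\|_{L^{2}(\Gamma)}^{2}$ before the trace inequality absorbs the remaining lower-order boundary integrals. The only difference is bookkeeping: the paper works in a moving frame adapted to $\Gamma$, where $\nabla^{\Gamma}u^{3}=0$ removes the normal contribution outright, whereas you work in Cartesian components and dispose of the normal part $\langle\partial_{\nu}u,\nu\rangle\,\nu$ by one extra integration by parts along the closed surface $\Gamma$ --- a harmless, and arguably more careful, variant of the same argument.
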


\begin{proof}
Under an orthonormal frame, we have
\begin{eqnarray}
\|\nabla^2u\|_{2}^{2}
&=&\sum_{k=1}^3\int_{\Omega}(\Delta u^{k})^{2}
-\int_{\Gamma}\sum_{i,j=1,2}\pi_{ij}(\nabla_{i}u^{k})(\nabla_{j}u^{k}) \notag\\
&&-\int_{\Gamma}H\sum_{k=1}^3\big|\partial_\nu u^{k}\big|^{2}
  +2\int_{\Gamma}\langle \nabla^{\Gamma}u^{k},\nabla^{\Gamma}(\partial_\nu u^{k})\rangle
  \notag \\
&=&\|\Delta u\|_{2}^{2}-\int_{\Gamma}\pi((\nabla u^{k})^{\Vert},
(\nabla u^{k})^{\Vert}) -\int_{\Gamma}H\sum_{k=1}^3\langle\nabla
u^{k},
\nu\rangle^{2}\notag\\
&& +2
\int_{\Gamma}\langle \nabla^{\Gamma}u^{k},
\nabla^{\Gamma}\langle \nabla u^{k},\nu\rangle \rangle,
\label{5-1-01}
\end{eqnarray}
where the second equality follows from (\ref{4-30-11}) applying to
$g=u^{k}$. The second and third boundary integrals can be dominated
by $\int_{\Gamma}|\nabla u|^{2}$. Thus, we have to handle the last
boundary integral, where we use the Navier boundary condition
\eqref{nav-f}. Working in a frame compatible to $\Gamma $, since
$\left. u^{\bot}\right|_{\Gamma}=0$ so that
$\nabla^{\Gamma}u^{3}=0$, then
\begin{eqnarray*}
I=\sum_{k=1}^3\int_{\Gamma}
\langle\nabla^{\Gamma}u^{k},\nabla^{\Gamma}(\partial_\nu
u^{k})\rangle =\sum_{k=1,2}\int_{\Gamma}\langle
\nabla^{\Gamma}u^{k}, \nabla^{\Gamma}(\partial_\nu u^{k})\rangle.
\end{eqnarray*}
On $\Gamma$, $u^1$ and $u^2$ are the tangent components of $u$, and
$u^{3}=0$,
\begin{equation*}
\partial_\nu u^{k}=e_{3}(u^{k})=\nabla_{3}u^{k}-\sum_{j=1,2}u^{j}\Gamma_{3j}^{k},
\end{equation*}
and $\nabla_{k}u^{3}=-\sum_{i=1,2}u^{i}\pi_{ki}$, $k=1,2$. For
$\omega=\nabla\times u$,
$\nabla_{3}u^{k}-\nabla_{k}u^{3}=\varepsilon_{3kj}\omega^{j}$ so
that, for $k=1,2$,
\begin{eqnarray}
\partial_\nu u^{k}=\varepsilon_{3kj}\omega^{j}+\nabla_{k}u^{3}
-\sum_{j=1,2}u^{j}\Gamma_{3j}^{k}
=\varepsilon_{3kj}\omega^{j}-\sum_{i=1,2}u^{i}\pi_{ki}
-\sum_{j=1,2}u^{j}\Gamma_{3j}^{k}. \label{es-5-02}
\end{eqnarray}
According to the Navier's $\zeta$--condition (\ref{nav-f}) (also see
the proof of Proposition \ref{navier-c}):
\begin{equation*}
\omega^{j}=-\frac{1}{\zeta}\varepsilon_{3aj}u^{a}
+2\varepsilon_{3aj}\sum_{b=1,2}h_{ba}u^{b}.
\end{equation*}
Substitution it into (\ref{es-5-02}) yields
\begin{eqnarray}
\partial_\nu u^{k}&=&-\frac{1}{\zeta}\varepsilon_{3aj}\varepsilon_{3kj}u^{a}
 +2\varepsilon_{3kj}\varepsilon_{3aj}\sum_{b=1,2}h_{ba}u^{b}
 -\sum_{i=1,2}u^{i}\pi_{ki}-\sum_{j=1,2}u^{j}\Gamma_{3j}^{k} \notag
\\
&=&-\frac{1}{\zeta}u^{k}+\sum_{i=1,2}\pi_{ik}u^{i}-\sum_{j=1,2}u^{j}\Gamma_{3j}^{k}\,.
\label{es-5-03}
\end{eqnarray}
It follows that
\begin{equation*}
\nabla^{\Gamma}(\partial_\nu u^{k})=-\frac{1}{\zeta}
\nabla^{\Gamma}u^{k}
+\nabla^{\Gamma}\big(\sum_{i=1,2}\pi_{ik}u^{i}\big)
-\nabla^{\Gamma}\big(\sum_{j=1,2}u^{j}\Gamma_{3j}^{k}\big),
\end{equation*}
so that
\begin{eqnarray}
&&\sum_{k=1,2}\langle
\nabla^{\Gamma}u^{k},\nabla^{\Gamma}(\partial_\nu u^{k})\rangle\notag\\
&&=-\frac{1}{\zeta}|\nabla^{\Gamma}u|^{2}
+ \sum_{k=1,2}\langle\nabla^{\Gamma}u^{k},
\nabla^{\Gamma}\big(\sum_{i=1,2}\pi_{ik}u^{i}\big)
-\sum_{k=1,2}\nabla^{\Gamma}\big(\sum_{j=1,2}u^{j}\Gamma_{3j}^{k}\big)\rangle
\notag \\
&&\leq
-\frac{1}{\zeta}|\nabla^{\Gamma}u|^{2}+C\left(|\nabla^{\Gamma}u|^{2}
 +|u|^{2}\right).
\label{key-es-01}
\end{eqnarray}
Therefore, we have
\begin{equation*}
I\leq -\frac{1}{\zeta}\int_{\Gamma}|\nabla^{\Gamma }u|^{2}
+C\int_{\Gamma}\big(|\nabla u|^{2}+|u|^{2}\big),
\end{equation*}
where $C$ is a constant depending only on $\Omega$.
Combining this inequality with (\ref{5-1-01}) yields
\begin{equation*}
\|\nabla^2u\|_{2}^{2}+\frac{2}{\zeta}\int_{\Gamma}|\nabla^{\Gamma}u|^{2}
\leq \|\Delta u\|_{2}^{2} +C\int_{\Gamma}\big(|\nabla
u|^{2}+|u|^{2}\big)\text{.}
\end{equation*}
Then the conclusion follows from the Sobolev embedding:
\begin{equation}\label{sobolev-embedding}
\int_{\Gamma}|\nabla u|^{2}\leq \varepsilon
\|\nabla^2u\|_{2}^{2}+\frac{C}{\varepsilon}\|\nabla u\|_{2}^{2}
\end{equation}
for some constant $C=C(\Omega)$, independent of $\varepsilon>0$,
since the boundary $\Gamma$ has bounded geometry.
\end{proof}

As a consequence, we have the following elliptic estimate.

\begin{corollary}\label{the01}
There exists a positive constant $C$ depending only on $\Omega$ such
that
\begin{eqnarray}
\|u\|_{H^{2}}^{2}+\frac{1}{\zeta}\|\nabla^{\Gamma}u\|_{L^{2}(\Gamma)}^{2}
\leq C\|(\nabla\times(\nabla\times u),\, \nabla \times
u,\,u)\|_{2}^{2}
\label{5-1-03}
\end{eqnarray}
for any vector field $u\in H^{2}(\Omega)$ satisfying
\eqref{kinematic-1}--\eqref{nav-f}.
\end{corollary}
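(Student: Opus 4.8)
The plan is to derive \eqref{5-1-03} from Theorem \ref{th-66-1} by replacing the term $\|\Delta u\|_2^2$ in \eqref{in-5-04} with a bound in terms of $\nabla\times(\nabla\times u)$, $\nabla\times u$, and $u$, and then absorbing the resulting $\|u\|_{H^1}^2$ contribution. For divergence-free vector fields one has the pointwise identity $\Delta u = -\nabla\times(\nabla\times u) + \nabla(\nabla\cdot u)$, so when $\nabla\cdot u$ is not assumed to vanish we write $\|\Delta u\|_2 \le \|\nabla\times(\nabla\times u)\|_2 + \|\nabla(\nabla\cdot u)\|_2$ and note $\|\nabla(\nabla\cdot u)\|_2 \le \|\nabla^2 u\|_2$. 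More carefully, since $\nabla\cdot u$ is a scalar built from first derivatives of $u$, we control its gradient by $\|\nabla^2 u\|_2$, so squaring gives
\begin{equation*}
\|\Delta u\|_2^2 \le 2\|\nabla\times(\nabla\times u)\|_2^2 + 2\|\nabla^2 u\|_2^2 .
\end{equation*}
This is not yet usable because the $\|\nabla^2 u\|_2^2$ term would need to be absorbed on the left with a coefficient less than one, which a naive substitution does not provide. The fix is to first establish an $H^1$-bound that eliminates $\nabla\cdot u$ as an independent quantity: indeed $\nabla\cdot u$ satisfies $\Delta(\nabla\cdot u) = \nabla\cdot(\Delta u) = -\nabla\cdot(\nabla\times(\nabla\times u)) = 0$ pointwise if we first knew $\Delta u$ were a gradient, but in general I would instead argue directly as follows.

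First I would bound $\|u\|_{H^1}^2$. By Lemma \ref{le-66-1}, equation \eqref{est-june26-01a}, since $u$ satisfies \eqref{kinematic-1},
\begin{equation*}
\|\nabla u\|_2^2 = \|\nabla\times u\|_2^2 + \|\nabla\cdot u\|_2^2 - \int_\Gamma \pi(u,u),
\end{equation*}
and the boundary integral is dominated by $\|u\|_{L^2(\Gamma)}^2 \le \ep\|\nabla u\|_2^2 + C_\ep\|u\|_2^2$ via the trace inequality \eqref{sobolev-embedding} (applied to $u$ in place of $\nabla u$) together with the geometry of $\Gamma$; absorbing the $\ep$-term yields
\begin{equation*}
\|u\|_{H^1}^2 \le C\big(\|\nabla\times u\|_2^2 + \|\nabla\cdot u\|_2^2 + \|u\|_2^2\big).
\end{equation*}
Next I would show $\|\nabla\cdot u\|_2^2 \le C\|(\nabla\times(\nabla\times u),\nabla\times u,u)\|_2^2$. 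Set $f = \nabla\cdot u$; integrating by parts, $\|\nabla f\|_2^2 = -\int_\Omega f\,\Delta f + \int_\Gamma f\,\partial_\nu f$, and $\Delta f = \nabla\cdot(\Delta u) = -\nabla\cdot(\nabla\times(\nabla\times u)) + \Delta f$ forces $\nabla\cdot(\nabla\times(\nabla\times u)) = 0$ identically, which just reflects that $\Delta u + \nabla\times(\nabla\times u)$ is a gradient; the honest statement is that $\|\nabla^2 u\|_2$ already controls $\|\nabla f\|_2$, so it suffices to keep $\|\nabla\cdot u\|_2$ in the $L^2$-terms on the right and treat it as part of $\|u\|_{H^1}$. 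Combining these with \eqref{in-5-04} gives
\begin{equation*}
\|\nabla^2 u\|_2^2 + \tfrac1\zeta\|\nabla^\Gamma u\|_{L^2(\Gamma)}^2 \le C\big(\|\nabla\times(\nabla\times u)\|_2^2 + \|\nabla^2 u\|_2^2\big) + C\|u\|_{H^1}^2,
\end{equation*}
which is still circular unless the $C\|\nabla^2 u\|_2^2$ is removed.

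The main obstacle, therefore, is precisely this absorption: one cannot afford the factor $2$ (or $C$) in front of $\|\nabla^2 u\|_2^2$ coming from $\|\Delta u\|_2^2 \le 2\|\nabla\times(\nabla\times u)\|_2^2 + 2\|\nabla(\nabla\cdot u)\|_2^2$. I would resolve it by handling $\nabla\cdot u$ separately and sharply. Let $f=\nabla\cdot u$; then $f$ solves an elliptic problem on $\Omega$ whose right-hand side is $\Delta f$, and since $\nabla\cdot u$ vanishes when $u$ is one of our solenoidal fields, for the general $H^2$ field one has the clean decomposition $\Delta u = -\nabla\times(\nabla\times u) + \nabla f$ with $\nabla\times(\nabla\times u)$ and $\nabla f$ \emph{$L^2$-orthogonal up to boundary terms} (integrate by parts: $\int_\Omega \langle \nabla\times(\nabla\times u), \nabla f\rangle = \int_\Gamma (\cdots)$, a pure boundary term controlled by $\ep\|\nabla^2 u\|_2^2 + C_\ep\|u\|_{H^1}^2$). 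Hence
\begin{equation*}
\|\Delta u\|_2^2 = \|\nabla\times(\nabla\times u)\|_2^2 + \|\nabla f\|_2^2 + (\text{boundary terms}) \le \|\nabla\times(\nabla\times u)\|_2^2 + \ep\|\nabla^2 u\|_2^2 + C_\ep\|u\|_{H^1}^2,
\end{equation*}
and now the $\ep\|\nabla^2 u\|_2^2$ can be absorbed on the left of \eqref{in-5-04}. After absorbing, and bounding $\|u\|_{H^1}^2$ by the right-hand side of \eqref{5-1-03} as above, we obtain
\begin{equation*}
\|\nabla^2 u\|_2^2 + \tfrac1\zeta\|\nabla^\Gamma u\|_{L^2(\Gamma)}^2 \le C\|(\nabla\times(\nabla\times u),\,\nabla\times u,\,u)\|_2^2 ,
\end{equation*}
and adding $\|u\|_{H^1}^2 \le C\|(\nabla\times(\nabla\times u),\nabla\times u,u)\|_2^2$ to the left upgrades $\|\nabla^2 u\|_2^2$ to the full $\|u\|_{H^2}^2$, completing the proof. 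I expect the only delicate point to be a careful bookkeeping of the boundary terms in the cross-integral $\int_\Omega\langle\nabla\times(\nabla\times u),\nabla f\rangle$, all of which involve at most first derivatives of $u$ on $\Gamma$ and are therefore absorbable by \eqref{sobolev-embedding}.
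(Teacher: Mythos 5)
There is a genuine gap at the step you yourself flag as the crux. You claim
$\|\Delta u\|_{2}^{2}=\|\nabla\times(\nabla\times u)\|_{2}^{2}+\|\nabla f\|_{2}^{2}+(\text{boundary terms})$
with $f=\nabla\cdot u$, and then bound the right-hand side by
$\|\nabla\times(\nabla\times u)\|_{2}^{2}+\ep\|\nabla^{2}u\|_{2}^{2}+C_{\ep}\|u\|_{H^{1}}^{2}$.
The integration by parts does dispose of the cross term, but the diagonal term $\|\nabla f\|_{2}^{2}=\|\nabla(\nabla\cdot u)\|_{2}^{2}$ survives, and it is a full second-derivative quantity: it is comparable to (a fixed portion of) $\|\nabla^{2}u\|_{2}^{2}$ with a constant of order one, not of order $\ep$. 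There is no smallness to exploit, so it cannot be absorbed into the left side of \eqref{in-5-04}. The same unresolved issue appears earlier in your argument, where $\|\nabla\cdot u\|_{2}^{2}$ is left sitting in your $H^{1}$ bound and is never controlled by the right-hand side of \eqref{5-1-03}. Neither defect is repairable in the generality you are working in: for $u=\nabla\phi$ with $\phi\in C_{c}^{\infty}(\Omega)$ highly oscillatory, both boundary conditions hold vacuously, $\nabla\times u=0$, so the right-hand side of \eqref{5-1-03} reduces to $C\|u\|_{2}^{2}$, while the left-hand side contains $\|\nabla^{2}u\|_{2}^{2}$; the inequality fails. So the estimate genuinely requires $\nabla\cdot u=0$, which is how the corollary is used everywhere in the paper (all fields involved lie in $K_{2}(\Omega)$; see \eqref{24-00-1} and the applications in Section 4).

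Once $\nabla\cdot u=0$ is in hand, the corollary is the two-line consequence of Theorem \ref{th-66-1} that the paper intends: the identity $\Delta u=-\nabla\times(\nabla\times u)+\nabla(\nabla\cdot u)$ becomes $\Delta u=-\nabla\times(\nabla\times u)$ pointwise, so $\|\Delta u\|_{2}=\|\nabla\times(\nabla\times u)\|_{2}$ exactly, with no cross term and no absorption needed; and \eqref{07june06-01} (or \eqref{est-june26-01a} plus the trace inequality, as you correctly argue) gives $\|u\|_{H^{1}}^{2}\leq C\|(\nabla\times u,\,u)\|_{2}^{2}$. Substituting both into \eqref{in-5-04} and adding the $H^{1}$ bound to upgrade $\|\nabla^{2}u\|_{2}^{2}$ to $\|u\|_{H^{2}}^{2}$ yields \eqref{5-1-03}. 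Your machinery for the cross term is not wrong, it is simply unnecessary, and it does not save the step that actually fails.
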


Therefore, for any \emph{divergence-free} vector field $u$ on
$\Omega$ satisfying \eqref{kinematic-1}--\eqref{nav-f},
\begin{eqnarray}
C\|(\Delta u,\,\nabla\times u,\, u)\|_{2}^{2} \leq \|u\|_{H^{2}}^{2}
\leq C^{-1} \|(\Delta u,\, \nabla\times u,\, u)\|_{2}^{2}
\label{24-00-1}
\end{eqnarray}
for some constant $C>0$ depending only on the domain $\Omega$, but
independent of $\zeta>0$.

\section{The Stokes operator with the Navier Boundary Condition}

In this section, we develop a theory of the Stokes operator acting
on divergence-free vector fields in a general domain $\Omega$
subject to the kinematic and Navier boundary conditions
\eqref{kinematic-1}--\eqref{nav-f}.

Note that the divergence operator $\nabla\cdot$ defined for smooth
vector fields with compact supports in $\Omega$ is closable in
$L^{2}(\Omega)$.
The kernel, $\ker(\nabla\cdot)$, is a closed subspace of
$L^{2}(\Omega)$, denoted by $K_{2}(\Omega)$. Any vector field $u\in
K_{2}(\Omega)\cap H^{1}(\Omega)$ is divergence-free: $\nabla \cdot
u=0$, and satisfies the kinematic condition \eqref{kinematic-1}.
The orthogonal complement of $K_{2}(\Omega)$ is a closed subspace of
$L^{2}(\Omega)$, denoted by $G_{2}(\Omega)$, and the decomposition
\begin{equation*}
L^{2}(\Omega)=K_{2}(\Omega)\oplus G_{2}(\Omega)
\end{equation*}
is called the Helmholtz decomposition. Any element in
$G_{2}(\Omega)$ can be identified with the gradient of a scalar
function, that is,
$$
G_{2}(\Omega)=\{\nabla p\in L^{2}(\Omega)\,:\, p\in L_{\text{loc}}^{2}(\Omega)\}.
$$
Let
$$
P_{\infty}\, :\, L^{2}(\Omega)\rightarrow K_{2}(\Omega)
$$
be the projection from $L^{2}(\Omega)$ onto $K_{2}(\Omega)$. The
following fact is easy but important.

\begin{lemma}\label{le-0-1}
Let $u\in H^{1}(\Omega)$, and let $u=P_{\infty}(u)+\nabla q_{u}$ be
the Helmholtz decomposition of $u$. Then
$$
\nabla\times P_{\infty}(u)=\nabla \times u, \qquad \nabla\cdot
P_{\infty}(u)=0, \qquad \left.
P_{\infty}(u)^{\bot}\right|_{\Gamma}=0.
$$
\end{lemma}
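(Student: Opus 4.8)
The plan is to verify the three assertions in turn: the first two follow from elementary distribution theory, and the third from the normal-trace (Gauss--Green) formula for $L^2$ vector fields with $L^2$ divergence. Throughout, write $v:=P_{\infty}(u)\in K_{2}(\Omega)$, so that $u=v+\nabla q_{u}$ with $\nabla q_{u}\in G_{2}(\Omega)$, where $q_{u}\in L^{2}_{\mathrm{loc}}(\Omega)$ and $\nabla q_{u}\in L^{2}(\Omega)$.

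For the curl identity, observe that $\nabla\times(\nabla q_{u})=0$ in the sense of distributions: its components are $\varepsilon_{ijk}\partial_{j}\partial_{k}q_{u}$, which vanish by the symmetry of distributional second derivatives, valid even though $q_{u}$ is only locally $L^{2}$. Hence $\nabla\times v=\nabla\times u-\nabla\times(\nabla q_{u})=\nabla\times u$ as distributions, and as $L^{2}$ fields since $\nabla\times u\in L^{2}(\Omega)$ (because $u\in H^{1}(\Omega)$). For the divergence identity, recall that $K_{2}(\Omega)$ is the kernel of the $L^{2}$-closure of $\nabla\cdot$ acting on $C^{\infty}_{c}(\Omega;\R^{3})$; thus there exist $\psi_{n}\in C^{\infty}_{c}(\Omega;\R^{3})$ with $\psi_{n}\to v$ and $\nabla\cdot\psi_{n}\to 0$ in $L^{2}(\Omega)$. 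For any $\phi\in C^{\infty}_{c}(\Omega)$, integration by parts gives $\int_{\Omega}(\nabla\cdot\psi_{n})\phi=-\int_{\Omega}\psi_{n}\cdot\nabla\phi$; letting $n\to\infty$ yields $\int_{\Omega}v\cdot\nabla\phi=0$, i.e. $\nabla\cdot v=0$ in $\mathcal{D}'(\Omega)$, hence in $L^{2}(\Omega)$.

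For the boundary condition, use that $v\in L^{2}(\Omega)$ has divergence $\nabla\cdot v=0\in L^{2}(\Omega)$, so $v$ possesses a well-defined normal trace $v\cdot\nu\in H^{-1/2}(\Gamma)$ characterized by the Gauss--Green identity
\begin{equation*}
\langle v\cdot\nu,\,\gamma\phi\rangle_{H^{-1/2}(\Gamma),\,H^{1/2}(\Gamma)}=\int_{\Omega}v\cdot\nabla\phi+\int_{\Omega}(\nabla\cdot v)\,\phi=\int_{\Omega}v\cdot\nabla\phi
\end{equation*}
for every $\phi\in H^{1}(\Omega)$, where $\gamma$ denotes the trace operator. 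Now for any $\phi\in H^{1}(\Omega)$ we have $\phi\in L^{2}_{\mathrm{loc}}(\Omega)$ and $\nabla\phi\in L^{2}(\Omega)$, so $\nabla\phi\in G_{2}(\Omega)$; since $v\in K_{2}(\Omega)$ is orthogonal to $G_{2}(\Omega)$, the right-hand side vanishes. Thus $\langle v\cdot\nu,\,\gamma\phi\rangle=0$ for all $\phi\in H^{1}(\Omega)$, and since $\gamma:H^{1}(\Omega)\to H^{1/2}(\Gamma)$ is surjective, $v\cdot\nu=0$, i.e. $\left.P_{\infty}(u)^{\bot}\right|_{\Gamma}=0$.

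The only steps requiring more than bookkeeping are (i) the validity of the Gauss--Green formula and the existence of the $H^{-1/2}$ normal trace for $L^{2}$ fields with $L^{2}$ divergence, and (ii) the surjectivity of the trace onto $H^{1/2}(\Gamma)$; both are classical for domains with smooth compact boundary, so the lemma is genuinely soft and I do not expect a real obstacle. A variant that avoids (i) is to invoke elliptic regularity: $q_{u}$ solves the Neumann problem $\Delta q_{u}=\nabla\cdot u$ in $\Omega$, $\partial_{\nu}q_{u}=\langle u,\nu\rangle$ on $\Gamma$, with $\nabla\cdot u\in L^{2}(\Omega)$ and $\langle u,\nu\rangle\in H^{1/2}(\Gamma)$, hence $q_{u}\in H^{2}(\Omega)$ and $v=u-\nabla q_{u}\in H^{1}(\Omega)$; then $\langle v,\nu\rangle=\langle u,\nu\rangle-\partial_{\nu}q_{u}=0$ is an identity of genuine $H^{1/2}(\Gamma)$ traces, and this $H^{1}$-regularity of $v$ also renders $\nabla\times v=\nabla\times u$ an equality of honest $L^{2}$ fields.
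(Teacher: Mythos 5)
Your proof is correct. Note that the paper itself offers no proof of this lemma (it is introduced only with the remark that ``the following fact is easy but important''), so there is no argument to compare against; what you supply is the standard one, and all three steps check out: the distributional identity $\nabla\times\nabla q_u=0$, the approximation of $P_\infty(u)$ by compactly supported fields $\psi_n$ with $\nabla\cdot\psi_n\to 0$ coming from the definition of $K_2(\Omega)$ as the kernel of the closed divergence operator, and the $H^{-1/2}(\Gamma)$ normal trace via Gauss--Green combined with the orthogonality $K_2(\Omega)\perp G_2(\Omega)$ and surjectivity of the trace map. The elliptic-regularity variant at the end is also sound, though it quietly uses that $q_u$ solves the Neumann problem $\Delta q_u=\nabla\cdot u$, $\partial_\nu q_u=\langle u,\nu\rangle$, which follows from the variational characterization of the orthogonal projection onto $G_2(\Omega)$.
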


\begin{proposition}\label{pr-23-1}
Let $u\in H^{1}(\Omega)$. Then
\begin{equation}
\|\nabla P_{\infty }(u)\|_{2}^{2}=\|\nabla\times
u\|_{2}^{2}-\int_{\Gamma}\pi (P_{\infty}(u), P_{\infty}(u)),
\label{23-3-2}
\end{equation}
and
\begin{equation}
\|\nabla P_{\infty}(u)\|_{2}\leq C\|(\nabla\times u, \,u)\|_{2}
\label{23-3-1}
\end{equation}
for some constant $C>0$ depending only on $\Omega$.
\end{proposition}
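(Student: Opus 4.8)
The plan is to deduce both assertions from the integration-by-parts identity \eqref{est-june26-01a} of Lemma \ref{le-66-1}, applied not to $u$ but to its Helmholtz projection $w:=P_{\infty}(u)$. The first step is to check that $w\in H^{1}(\Omega)$, so that the identity (and the boundary trace appearing in it) are meaningful. Writing $u=w+\nabla q_{u}$, the potential $q_{u}$ solves the Neumann problem $\Delta q_{u}=\nabla\cdot u$ in $\Omega$ with $\partial_{\nu}q_{u}=\langle u,\nu\rangle$ on $\Gamma$, the latter because $w^{\bot}|_{\Gamma}=0$ by Lemma \ref{le-0-1}. Since $\nabla\cdot u\in L^{2}(\Omega)$ and $\langle u,\nu\rangle\in H^{1/2}(\Gamma)$, elliptic regularity on the smooth domain $\Omega$ gives $q_{u}\in H^{2}(\Omega)$, hence $\nabla q_{u}\in H^{1}(\Omega)$ and $w=u-\nabla q_{u}\in H^{1}(\Omega)$. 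By Lemma \ref{le-0-1}, $w$ is divergence-free, satisfies the kinematic condition \eqref{kinematic-1}, and $\nabla\times w=\nabla\times u$.

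Next I would apply \eqref{est-june26-01a} to $w$. Since that identity is stated for $H^{2}$ fields, I would first approximate $w$ in $H^{1}(\Omega)$ by smooth fields obeying the kinematic condition and pass to the limit, using that every term in \eqref{est-june26-01a}, including the boundary integral $\int_{\Gamma}\pi(\cdot,\cdot)$ via continuity of the trace map $H^{1}(\Omega)\to L^{2}(\Gamma)$, is continuous in the $H^{1}$ norm. Because $\nabla\cdot w=0$ and $\nabla\times w=\nabla\times u$, this gives exactly
\[
\|\nabla P_{\infty}(u)\|_{2}^{2}=\|\nabla\times u\|_{2}^{2}-\int_{\Gamma}\pi\big(P_{\infty}(u),P_{\infty}(u)\big),
\]
which is \eqref{23-3-2}.

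For \eqref{23-3-1} I would estimate the boundary term. Since $\Gamma$ is compact and smooth, $\pi$ is bounded, so $\big|\int_{\Gamma}\pi(w,w)\big|\le C\|w\|_{L^{2}(\Gamma)}^{2}$. Applying the interpolation trace inequality $\|v\|_{L^{2}(\Gamma)}^{2}\le\ep\|\nabla v\|_{2}^{2}+C_{\ep}\|v\|_{2}^{2}$, valid because $\Gamma$ has bounded geometry (in the same spirit as \eqref{sobolev-embedding}), with $v=w$, and using $\|w\|_{2}=\|P_{\infty}(u)\|_{2}\le\|u\|_{2}$ since $P_{\infty}$ is an orthogonal projection, \eqref{23-3-2} becomes
\[
\|\nabla P_{\infty}(u)\|_{2}^{2}\le\|\nabla\times u\|_{2}^{2}+C\ep\|\nabla P_{\infty}(u)\|_{2}^{2}+CC_{\ep}\|u\|_{2}^{2}.
\]
Choosing $\ep$ with $C\ep\le\tfrac12$ and absorbing the gradient term on the left yields $\|\nabla P_{\infty}(u)\|_{2}^{2}\le 2\|\nabla\times u\|_{2}^{2}+2CC_{\ep}\|u\|_{2}^{2}$, and taking square roots gives \eqref{23-3-1}.

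The only non-routine step is the first one: guaranteeing $P_{\infty}(u)\in H^{1}(\Omega)$ and that \eqref{est-june26-01a} is available for it, which rests on Neumann elliptic regularity together with a density argument. Once that is in place, \eqref{23-3-2} is immediate and \eqref{23-3-1} follows from the standard trace inequality and an absorption; the rest is bookkeeping.
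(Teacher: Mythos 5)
Your argument is correct, and it reaches the two identities in the opposite order from the paper. The paper first invokes Lemma \ref{le-0-1} to get $\nabla\times P_{\infty}(u)=\nabla\times u$ and the kinematic condition for $P_{\infty}(u)$, then cites the packaged elliptic estimate \eqref{07june06-01} to obtain \eqref{23-3-1} directly, and only afterwards deduces the exact identity \eqref{23-3-2} by integration by parts. You instead go straight to the integration-by-parts identity \eqref{est-june26-01a} applied to $w=P_{\infty}(u)$, obtaining \eqref{23-3-2} first, and then recover \eqref{23-3-1} by bounding the curvature boundary term with the trace interpolation inequality and absorbing. Since the paper itself notes that \eqref{07june06-01} is a consequence of Lemma \ref{le-66-1} by exactly this kind of absorption, the two routes rest on the same underlying computation; yours simply unpacks the cited estimate rather than quoting it. A genuine added value of your write-up is the opening step: you justify that $P_{\infty}(u)\in H^{1}(\Omega)$ via Neumann elliptic regularity for the potential $q_{u}$ (with the compatibility condition $\int_{\Omega}\nabla\cdot u=\int_{\Gamma}\langle u,\nu\rangle$ holding by the divergence theorem), together with the density argument needed to apply \eqref{est-june26-01a} to an $H^{1}$ field. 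The paper leaves both points implicit. One small presentational caveat: you invoke Lemma \ref{le-0-1} to assert $w^{\bot}|_{\Gamma}=0$ while simultaneously characterizing $q_{u}$ by the Neumann data $\partial_{\nu}q_{u}=\langle u,\nu\rangle$; it is cleaner to define $q_{u}$ by that Neumann problem outright and then verify that $u-\nabla q_{u}$ lies in $K_{2}(\Omega)$ and that $\nabla q_{u}\perp K_{2}(\Omega)$, which identifies $u-\nabla q_{u}$ with $P_{\infty}(u)$. This is a matter of ordering, not a gap.
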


\begin{proof}
By Lemma \ref{le-0-1}, we have $\nabla\times
P_{\infty}(u)=\nabla\times u$. Following the elliptic estimate
(\ref{07june06-01}),
\begin{eqnarray*}
\|\nabla P_{\infty}(u)\|_{2} \leq C\|(\nabla\times P_{\infty}(u),\,
P_{\infty}(u))\|_{2}\leq C\|(\nabla\times u, u)\|_{2},
\end{eqnarray*}
which gives (\ref{23-3-1}). Then (\ref{23-3-2}) follows from
integration by parts and (\ref{23-3-1}).
\end{proof}

The Stokes operator $S$ can be defined to be the composition
$S=P_{\infty }\circ \Delta $ with domain $H^2(\Omega)$. We often
restrict the Stokes operator on the Hilbert space $K_{2}(\Omega)$,
hence with domain $H^{2}(\Omega)\cap K_{2}(\Omega)$, but we will use
the same notation $S$ if no confusion may arise.

\subsection{The Stokes operator with the Navier boundary condition \eqref{nav-f}}

Let $\zeta >0$ be a constant, and let $D_{0,\zeta}(S)$ be the space
of all vector fields $u\in K_{2}(\Omega)\cap C^{\infty}(\Omega)$ (so
that $\nabla\cdot u=0$ and $u^{\bot}\big|_{\Gamma}=0$) which satisfy
the Navier's $\zeta$-condition \eqref{nav-f}.
Then $D_{0,\zeta}(S)$ is dense in $K_{2}(\Omega)$, and
$(S,D_{0,\zeta}(S))$ is a densely defined linear operator on the
Hilbert space $K_{2}(\Omega)$.

\begin{lemma}\label{le-0-2}
Let $u\in D_{0,\zeta}(S)$, and let $\Delta u=S(u)+\nabla p$ be the
Helmholtz decomposition of $\Delta u$. Then $p$ is the unique
solution (up to a constant) of the Neumann boundary problem:
\begin{equation}
\Delta p=\nabla\cdot\left(\Delta u\right)\text{, } \qquad
\partial_\nu p\big|_{\Gamma}
=\frac{1}{\zeta}\nabla^{\Gamma}\cdot
u-2\nabla^{\Gamma}\cdot\pi(u)\text{.}
\label{p-30-02}
\end{equation}
\end{lemma}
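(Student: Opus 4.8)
The plan is to recover the equation for $p$ by taking the divergence and the normal trace of the Helmholtz decomposition $\Delta u = S(u) + \nabla p$. Since $S(u) = P_\infty(\Delta u) \in K_2(\Omega)$, applying $\nabla\cdot$ formally gives $\nabla\cdot(\Delta u) = \Delta p$, which is the interior equation; the subtlety is only that $S(u)$ need not be divergence-free pointwise unless it lies in $H^1$, but for $u \in D_{0,\zeta}(S) \subset C^\infty(\Omega)$ one has $\Delta u$ smooth, hence $\nabla q := \nabla p \in H^1_{\mathrm{loc}}$ and the elliptic regularity for the Neumann problem makes this rigorous; I would simply record that $p$ solves $\Delta p = \nabla\cdot(\Delta u)$ in the weak sense. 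Uniqueness up to a constant is the standard solvability statement for the Neumann problem on $\Omega$ (the compatibility condition $\int_\Omega \nabla\cdot(\Delta u) = \int_\Gamma \partial_\nu p$ will come out automatically from the boundary computation below, via the divergence theorem applied to $\Delta u$ and to $S(u)$).

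The substantive part is the boundary condition. Since $S(u)^\perp|_\Gamma = 0$ by Lemma \ref{le-0-1} (applied with $\Delta u$ in place of $u$: note $P_\infty(\Delta u) = S(u)$ and $\nabla q_{\Delta u} = \nabla p$), taking the normal component of $\Delta u = S(u) + \nabla p$ on $\Gamma$ gives
\begin{equation*}
\partial_\nu p\big|_\Gamma = \langle \Delta u, \nu\rangle\big|_\Gamma.
\end{equation*}
So everything reduces to computing $\langle \Delta u, \nu\rangle$ on $\Gamma$ for $u \in D_{0,\zeta}(S)$. The strategy here is to work in a moving frame compatible to $\Gamma$ and use the identity $\Delta u = \nabla(\nabla\cdot u) - \nabla\times(\nabla\times u) = -\nabla\times\omega$ (using $\nabla\cdot u = 0$), so that $\langle \Delta u, \nu\rangle = -\langle \nabla\times\omega, \nu\rangle$. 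The normal component of a curl on $\Gamma$ is the surface divergence of the rotated tangential part: $\langle\nabla\times\omega,\nu\rangle = \nabla^\Gamma\cdot(\ast\,\omega^\parallel)$ — more precisely I would use the identity $\langle\nabla\times v,\nu\rangle|_\Gamma = -\nabla^\Gamma\cdot(\ast v^\parallel)$ (or its sign-convention variant, to be pinned down from \eqref{id-30-01}), valid since $\langle\nabla\times v,\nu\rangle$ on $\Gamma$ depends only on the tangential trace of $v$. Then I substitute the Navier $\zeta$-condition in the form \eqref{nav-f}, namely $\omega^\parallel|_\Gamma = -\frac{1}{\zeta}(\ast u) + 2(\ast\pi(u))$, recalling that $u^\perp|_\Gamma = 0$ so $u = u^\parallel$ on $\Gamma$. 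Using $\ast(\ast v^\parallel) = -v^\parallel$, this yields
\begin{equation*}
\ast\,\omega^\parallel = -\frac{1}{\zeta}\ast(\ast u) + 2\ast(\ast\pi(u)) = \frac{1}{\zeta}u - 2\pi(u) \quad\text{on }\Gamma,
\end{equation*}
and hence $\partial_\nu p = -\langle\nabla\times\omega,\nu\rangle = \nabla^\Gamma\cdot(\ast\omega^\parallel) = \frac{1}{\zeta}\nabla^\Gamma\cdot u - 2\nabla^\Gamma\cdot\pi(u)$, which is \eqref{p-30-02}. (Alternatively, one can avoid the curl-of-curl identity entirely: write $\langle\Delta u,\nu\rangle = \sum_k \Delta u^k\,\nu_k$ in the moving frame, use $\Delta u^3 = \Delta\langle u,\nu\rangle + (\text{lower order in }u,\nabla u)$ type expansions together with the component form \eqref{cceq01} of the Navier condition and the relation \eqref{es-5-03} for $\partial_\nu u^k$ derived in the proof of Theorem \ref{th-66-1}; but the intrinsic computation via $\nabla\times\omega$ is cleaner.)

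The main obstacle is purely bookkeeping: getting the surface-geometry identity $\langle\nabla\times v,\nu\rangle|_\Gamma = \pm\,\nabla^\Gamma\cdot(\ast v^\parallel)$ with the correct sign, and making sure the Christoffel-symbol terms that appear when one differentiates components in the moving frame organize themselves into the intrinsic surface divergence $\nabla^\Gamma\cdot$ and the operator $\pi$ rather than leaving spurious curvature terms. This is exactly the kind of local computation the paper elsewhere attributes to \cite{C-Q1}, so I would carry it out once in the frame $(e_1,e_2,e_3)$ with $e_3 = \nu$, check it against the special cases already recorded in \eqref{june21-045}–\eqref{05-01}, and then state the result. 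The Neumann solvability and uniqueness-up-to-a-constant is then a black-box citation to standard elliptic theory, and the compatibility condition is automatic since $p$ was obtained as the potential in an actual Helmholtz decomposition.
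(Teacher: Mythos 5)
Your proposal is correct and follows essentially the same route as the paper: take the divergence for the interior equation, use $S(u)^{\bot}|_{\Gamma}=0$ to get $\partial_\nu p=\langle\Delta u,\nu\rangle$, write $\Delta u=-\nabla\times\omega$ via $\nabla\cdot u=0$, convert the normal component of the curl into a tangential surface operator acting on $\omega^{\Vert}$ (the paper writes it as $-\nabla^{\Gamma}\times(\nabla\times u)^{\Vert}$, you as $\nabla^{\Gamma}\cdot(\ast\,\omega^{\Vert})$, which agree), and substitute the Navier condition \eqref{nav-f}. The sign bookkeeping you flag works out as you computed, using $\ast\ast=-\mathrm{id}$ on tangent vectors.
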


\begin{proof}
Since $\Delta u=Su+\nabla p$, then, by taking the divergence and
considering the normal components, we can easily see that $p$
satisfies the Poisson equation:
\begin{equation}
\Delta p=\nabla\cdot\left(\Delta u\right)\text{,} \qquad
\partial_\nu p\big|_{\Gamma}=\langle\Delta u, \nu \rangle\text{.}
\label{p-30-01}
\end{equation}
Since $\nabla\cdot u=0$, $\Delta u=-\nabla\times(\nabla\times u)$ so
that
\begin{eqnarray*}
\langle \Delta u, \nu\rangle=-\langle\nabla\times(\nabla\times u),
\nu\rangle =-\nabla^{\Gamma}\times (\nabla\times u)^{\Vert},
\end{eqnarray*}
and (\ref{p-30-02}) follows from the Navier's $\zeta$-condition
\eqref{nav-f}.
\end{proof}

\begin{remark}
For any $p\geq 2$, there exists a constant $C(p)>0$ depending only
on $\Omega $ (e.g., $C(2)=1$) such that
$$
\|\nabla p\|_{p}\leq C\|\Delta u\|_{p}.
$$
Therefore,
$$
\|S(u)\|_{p}\leq C(p)\|\Delta u\|_{p} \qquad\,\, \text{for any $u\in
D_{0,\zeta}(S)$}.
$$
Of course, $\|S(u)\|_{2}=\|P_{\infty}\Delta u\|_{2} \leq \|\Delta
u\|_{2}$, if $u\in H^{2}(\Omega)$.
\end{remark}

\begin{theorem}\label{th-n-01}
Consider the bilinear form $(\mathcal{E},D_{0,\zeta }(S))$ on
$K_{2}(\Omega)$:
\begin{equation}
\mathcal{E}(u,w)=-\int_{\Omega}\langle Su,w\rangle \qquad\text{for
any }\,  u,w\in D_{0,\zeta}(S)\text{.}  \label{b-0-01}
\end{equation}
Then
\begin{enumerate}\renewcommand{\theenumi}{\roman{enumi}}

\item The bilinear form $(\mathcal{E},D_{0,\zeta}(S))$ on the Hilbert space
$K_{2}(\Omega)$ is densely definite, symmetric, and
\begin{equation}
\mathcal{E}(u,w)=\int_{\Omega}\langle \nabla u,\nabla w\rangle
+\frac{1}{\zeta}\int_{\Gamma}\langle u,w\rangle -\int_{\Gamma}\pi
(u,w) \qquad\text{for any}\,\, u,w\in D_{0,\zeta}(S).
\label{bi-0-21}
\end{equation}

\item For any $\varepsilon \in (0,1)$, there exists a constant
$C(\varepsilon,\Omega)$ such that
\begin{equation}
\mathcal{E}(u,u)\geq (1-\varepsilon )\|\nabla
u\|_{2}^{2}-C(\varepsilon, \Omega)\|u\|_{2}^{2}\,\, \qquad\text{for
any}\,\, u\in D_{0,\zeta}(S). \label{0.0.2}
\end{equation}

\item $(\mathcal{E},D_{0,\zeta }(S))$ is closable on $K_{2}(\Omega)$, its
closure is denoted by $(\mathcal{E},D_{\zeta}(\mathcal{E}))$.
Identity {\rm (\ref{bi-0-21})} remains true for any $u,w\in
D_{\zeta}(\mathcal{E})$.

\item If $\pi \leq \frac{1}{\zeta }$, then
\begin{equation}
\mathcal{E}(u,u)\geq \|\nabla u\|_{2}^{2}\qquad \text{for any}\,\,
u\in D_{\zeta }(\mathcal{E})\text{.} \label{0-0-1}
\end{equation}

\item $D_{\zeta}(\mathcal{E})=K_{2}(\Omega)\cap H^{1}(\Omega)$
which is thus independent of $\zeta$ and hence denoted by
$D(\mathcal{E})$.
\end{enumerate}
\end{theorem}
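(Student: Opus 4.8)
The plan is to reduce everything to the integration‑by‑parts identity \eqref{bi-0-21} of part (i); parts (ii)–(iv) then fall out of \eqref{bi-0-21} together with a trace inequality and the abstract theory of closed forms, while part (v) is the only step with real content.

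\emph{Part (i).} First I would prove \eqref{bi-0-21}; density of $D_{0,\zeta}(S)$ in $K_2(\Omega)$ is assumed. Fix $u,w\in D_{0,\zeta}(S)$. Writing $\Delta u=Su+\nabla p$ (Lemma \ref{le-0-2}) and using that $w\in K_2(\Omega)\cap H^1(\Omega)$ is $L^2$‑orthogonal to $G_2(\Omega)\ni\nabla p$, we get $\mathcal{E}(u,w)=-\int_\Omega\langle\Delta u,w\rangle$, and integration by parts (legitimate since $u,w\in H^2$) gives
\[
\mathcal{E}(u,w)=\int_\Omega\langle\nabla u,\nabla w\rangle-\int_\Gamma\langle\partial_\nu u,w\rangle .
\]
Since $u^{\bot}|_\Gamma=w^{\bot}|_\Gamma=0$, in a moving frame compatible to $\Gamma$ the boundary integrand equals $\sum_{k=1,2}(\nabla_{3}u^{k})w^{k}$; by the frame computation performed in the proof of Theorem \ref{th-66-1} (see \eqref{es-5-03}) one has $\nabla_{3}u^{k}=e_{3}(u^{k})+\sum_{j}u^{j}\Gamma_{3j}^{k}=-\tfrac1\zeta u^{k}+\sum_{i}\pi_{ik}u^{i}$ on $\Gamma$, the Christoffel terms cancelling precisely because $u$ obeys the Navier condition \eqref{nav-f}. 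Hence $\langle\partial_\nu u,w\rangle=-\tfrac1\zeta\langle u,w\rangle+\pi(u,w)$ on $\Gamma$, which is \eqref{bi-0-21}; symmetry of $\mathcal{E}$ is then manifest, the right‑hand side of \eqref{bi-0-21} being symmetric in $u,w$.

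\emph{Parts (ii)–(iv).} Put $w=u$ in \eqref{bi-0-21}. Dropping the nonnegative term $\tfrac1\zeta\|u\|_{L^2(\Gamma)}^2$ and bounding $\int_\Gamma\pi(u,u)\le C_\Omega\|u\|_{L^2(\Gamma)}^2$, the trace interpolation inequality $\|u\|_{L^2(\Gamma)}^2\le\delta\|\nabla u\|_2^2+C(\delta,\Omega)\|u\|_2^2$ (valid since $\Gamma$ has bounded geometry, exactly as in \eqref{sobolev-embedding}) with $\delta=\varepsilon/C_\Omega$ gives (ii); note the constant here is independent of $\zeta$. Combined with the upper bound $|\mathcal{E}(u,w)|\le C_\zeta\|u\|_{H^1}\|w\|_{H^1}$ (again from \eqref{bi-0-21} and the trace theorem), (ii) shows that $\mathcal{E}_1:=\mathcal{E}+C_0\|\cdot\|_2^2$ defines, for $C_0=C_0(\Omega)$ large, a form‑norm on $D_{0,\zeta}(S)$ equivalent to $\|\cdot\|_{H^1}$, and that $\mathcal{E}$ is $H^1$‑continuous; closability is then the standard argument (if $u_n\to0$ in $K_2(\Omega)$ with $(u_n)$ $\mathcal{E}_1$‑Cauchy, then $(u_n)$ is $H^1$‑Cauchy, its $H^1$‑limit must be $0$, whence $\mathcal{E}_1(u_n,u_n)\to0$), so $D_\zeta(\mathcal{E})=\overline{D_{0,\zeta}(S)}^{\,H^1}\subseteq K_2(\Omega)\cap H^1(\Omega)$ and \eqref{bi-0-21} extends by continuity, giving (iii). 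For (iv), if $\pi\le\tfrac1\zeta$ then $\int_\Gamma\pi(u,u)\le\tfrac1\zeta\int_\Gamma|u|^2$, so \eqref{bi-0-21} gives $\mathcal{E}(u,u)\ge\|\nabla u\|_2^2$ on $D_{0,\zeta}(S)$ and hence on $D_\zeta(\mathcal{E})$.

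\emph{Part (v), and the main obstacle.} The inclusion $\subseteq$ is contained in (iii); the reverse amounts to showing $D_{0,\zeta}(S)$ is $H^1$‑dense in $K_2(\Omega)\cap H^1(\Omega)$, and the $\zeta$‑independence is then automatic. I would argue by contradiction using $\mathcal{E}_1$ as an inner product on $K_2(\Omega)\cap H^1(\Omega)$: if the closed subspace $D_\zeta(\mathcal{E})$ were proper, there is $v\ne0$ in $K_2(\Omega)\cap H^1(\Omega)$ with $\mathcal{E}_1(v,w)=0$ for all $w\in D_{0,\zeta}(S)$. Testing against divergence‑free $w\in C_c^\infty(\Omega)$ and invoking the de Rham lemma gives $-\Delta v+C_0v=\nabla p$ weakly for some $p\in L^2_{\mathrm{loc}}(\Omega)$; since $\nabla\cdot v=0$ and $v^{\bot}|_\Gamma=0$, the elliptic theory for the Navier boundary value problem --- built on the a priori estimate of Corollary \ref{the01} via a tangential difference‑quotient argument --- yields $v\in H^2(\Omega)$. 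Integrating by parts in $\mathcal{E}_1(v,w)$ for general $w\in D_{0,\zeta}(S)$ and using that the tangential traces $\{w^{\Vert}|_\Gamma:w\in D_{0,\zeta}(S)\}$ are dense in $L^2(\Gamma;T\Gamma)$, one finds $v$ satisfies the kinematic and Navier conditions; then the computation of (i) applies to $v$, so $-C_0\|v\|_2^2=\mathcal{E}(v,v)\ge-\tfrac12C_0\|v\|_2^2$ by (ii) once $C_0$ is large, forcing $v=0$. Hence $D_\zeta(\mathcal{E})=K_2(\Omega)\cap H^1(\Omega)$. The entire difficulty lies here: this is the statement that the Navier condition is a \emph{natural}, not essential, boundary condition for $\mathcal{E}$, which requires (a) the interior‑to‑boundary regularity ``weak $H^1$ solution of the Navier problem $\Rightarrow$ $H^2$ solution'' and (b) the density of boundary traces of admissible divergence‑free test fields --- both forcing one to manage the interaction of the divergence‑free constraint with a first‑order boundary condition, e.g. through a divergence‑correcting boundary‑layer construction near $\Gamma$. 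Alternatively, (v) may be obtained by combining the classical $H^1$‑density of smooth divergence‑free fields tangent to $\Gamma$ in $K_2(\Omega)\cap H^1(\Omega)$ (a consequence of Hodge theory) with the same boundary‑layer correction to install the Navier condition at the cost of an arbitrarily small $H^1$‑error; the boundary‑layer estimate is again the crux, whereas (i)–(iv) are routine.
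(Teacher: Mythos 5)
Your parts (i)--(iv) are correct and essentially reproduce the paper's argument. The only variation is in (i): the paper integrates by parts through $S(u)=-\nabla\times(\nabla\times u)-\nabla p$, evaluates the boundary term $\int_{\Gamma}\langle(\nabla\times u)^{\Vert}\times w,\nu\rangle$ via \eqref{4-30-03}, and then converts $\int_{\Omega}\langle\nabla\times u,\nabla\times w\rangle$ into $\int_{\Omega}\langle\nabla u,\nabla w\rangle+\int_{\Gamma}\pi(u,w)$; you instead integrate $\Delta$ by parts directly and compute $\langle\partial_{\nu}u,w\rangle=-\tfrac{1}{\zeta}\langle u,w\rangle+\pi(u,w)$ from \eqref{es-5-03}. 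Both routes use the Navier condition in the same place and give \eqref{bi-0-21}; your remark that the constant in (ii) is $\zeta$-independent is right and consistent with the paper. The closability argument and (iv) are the paper's.

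The issue is part (v). Your primary route (take $v\neq 0$ in the $\mathcal{E}_1$-orthogonal complement of $D_{\zeta}(\mathcal{E})$ inside $K_2(\Omega)\cap H^1(\Omega)$, upgrade $v$ to $H^2$, show it satisfies the Navier condition, and contradict (ii)) rests on two ingredients that are nowhere available and that you do not supply: (a) $H^2$-regularity up to the boundary for a \emph{weak} $H^1$ solution of the Stokes system with the natural (Navier) boundary condition --- the paper's Theorem \ref{th-66-1} and Corollary \ref{the01} are a priori estimates for fields already known to lie in $H^2$ and satisfy \eqref{nav-f}, so they cannot be invoked here without the full difference-quotient machinery for the constrained system, which is work of the same order as the theorem itself; and (b) the density of the tangential traces $\{w^{\Vert}|_{\Gamma}: w\in D_{0,\zeta}(S)\}$ in $L^2(\Gamma;T\Gamma)$, which is essentially the statement to be proved in disguise: producing divergence-free, tangent fields with prescribed first-order boundary behaviour is exactly the difficulty, and the natural proof of (b) is again a boundary-layer corrector. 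So the orthogonal-complement route does not avoid the corrector; it relocates it into two unproven lemmas, and as written there is a genuine gap.

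Your one-sentence ``alternative'' at the end is in fact the paper's actual proof: starting from a smooth divergence-free field $u$ tangent to $\Gamma$, one adds a corrector supported in a collar of width $\varepsilon$ --- in the flat model $u_{\varepsilon}=u+x_3\mathcal{X}_{\{|x_3|<\varepsilon\}}(h_1,h_2,h_3)$ with $h_j=\tfrac{1}{\zeta}u^j-\partial_{x_3}u^j$ on $\Gamma$ for $j=1,2$ and $h_3$ chosen to compensate the divergence --- so that $u_\varepsilon$ satisfies the Navier $\zeta$-condition and $u_\varepsilon\to u$ in $H^1$ because $\nabla(x_3\mathcal{X}_{\{|x_3|<\varepsilon\}}h)$ has $L^2$-norm $O(\sqrt{\varepsilon})$. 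If you want a complete argument you should carry out this construction (and its localization to a curved boundary) explicitly rather than gesture at it; that, not (i)--(iv), is where the content of the theorem lies.
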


\begin{proof}
Let $u,w\in D_{0,\zeta}(S)$ and write $\Delta u=S(u)+\nabla p$.
Since $\nabla\cdot u=0$, then
\begin{equation}\label{s-eq}
S(u)=-\nabla \times (\nabla \times u)-\nabla p,
\end{equation}
where $p$ solves the Neumann problem (\ref{p-30-02}). Taking inner
product on both sides of \eqref{s-eq} with $w$ and integration by
parts on $\Omega$ yields
\begin{eqnarray*}
\mathcal{E}(u,w) &=&\int_{\Omega}\langle \nabla \times (\nabla\times
u),w\rangle +\int_{\Omega}\langle \nabla p,w\rangle  \\
&=&\int_{\Omega}\langle \nabla \times u,\nabla \times w\rangle
+\int_{\Gamma}\langle (\nabla \times u)^{\Vert }\times w,\nu \rangle  \\
&=&\int_{\Omega}\langle \nabla \times u,\nabla \times w\rangle
-2\int_{\Gamma}\pi (u,w)+\frac{1}{\zeta }\int_{\Gamma}\langle
u,w\rangle,
\end{eqnarray*}
where we have used
\eqref{kinematic-1}--\eqref{nav-f} so that
\begin{equation}
\langle (\nabla\times u)^{\Vert }\times w,\nu \rangle
=\frac{1}{\zeta} \langle u,w\rangle -2\pi (u,w)\text{.}
\label{4-30-03}
\end{equation}
Therefore, $(u,w)\rightarrow \mathcal{E}(u,w)$ is symmetric and
bilinear. Since $\left. u^{\bot}\right|_{\Gamma}=\left.
w^{\bot}\right|_{\Gamma}=0$, then
\begin{equation*}
\int_{\Omega}\langle \nabla u,\nabla w\rangle =\int_{\Omega}\langle
\nabla \times u,\nabla \times w\rangle -\int_{\Gamma}\pi \left(u,
w\right),
\end{equation*}
and hence
\begin{equation}
\mathcal{E}(u,w)=\int_{\Omega }\langle \nabla u,\nabla w\rangle
+\frac{1}{\zeta}\int_{\Gamma }\langle u,w\rangle -\int_{\Gamma }\pi
(u,w)\text{.} \label{4-30-07}
\end{equation}
If $\pi\leq \frac{1}{\zeta}$, then
\begin{equation*}
\mathcal{E}(u,u)\geq \|\nabla u\|_{2}^{2}\text{ .}
\end{equation*}

Let $\lambda_{1}$ be a upper bound of the second fundamental form
$\pi$, i.e., $\pi\leq \lambda_{1}$, then
\begin{eqnarray*}
\mathcal{E}(u,u)
\geq \|\nabla u\|_{2}^{2}-\lambda_{1} \int_{\Gamma}|u|^{2}
\geq\left( 1-\varepsilon \right)\|\nabla u\|_{2}^{2}
  -\frac{C}{\varepsilon}\|u\|_{2}^{2}\text{}
\end{eqnarray*}
for some $C=C(\Omega)>0$, where we have used the trace imbedding
inequality:
\begin{equation}\label{sobolev-trace-embedding}
\int_\Gamma |u|^2\le \varepsilon\|\nabla u\|^2_2+
\frac{C}{\varepsilon}\|u\|_2^2.
\end{equation}

Next, we show that $(\mathcal{E},D_{0,\zeta}(S))$ is closable on
$K_{2}(\Omega)$. Indeed, if $u_{n}\in D_{0,\zeta}(S)$ such that
$\mathcal{E}(u_{n}-u_{m},u_{n}-u_{m})\rightarrow 0$ and
$\|u_{n}-u_{m}\|_{2}\rightarrow 0$, then, since
\begin{eqnarray*}
\frac{1}{2}\|\nabla (u_{n}-u_{m})\|_{2}^{2}\leq
C\|u_{n}-u_{m}\|_{2}^{2} +\mathcal{E}(u_{n}-u_{m},u_{n}-u_{m}),
\end{eqnarray*}
we have
$$
\|\nabla (u_{n}-u_{m})\|_{2}^{2}\rightarrow 0.
$$
That is, $\{u_{n}\}$ is a Cauchy sequence in $H^{1}(\Omega)$, and
hence there exists a unique $u\in H^{1}(\Omega)$ such that
\begin{equation*}
\|u_{n}-u\|_{2}^{2}+\|\nabla (u_{n}-u)\|_{2}^{2}\rightarrow
0\text{.}
\end{equation*}
It follows by the Sobolev imbedding that
\begin{equation*}
\lim_{n\rightarrow \infty}\int_{\Gamma }\langle u_{n},u_{n}\rangle
=\int_{\Gamma }|u|^{2}\text{, } \qquad
\lim_{n\rightarrow\infty}\int_{\Gamma}\pi(u_{n},u_{n})=\int_{\Gamma}\pi
(u,u)
\end{equation*}
so that
\begin{equation*}
\lim_{n\rightarrow \infty }\mathcal{E}(u_{n},u_{n})=\int_{\Omega
}|\nabla u|^{2}+\frac{1}{\zeta }\int_{\Gamma }|u|^{2}-\int_{\Gamma
}\pi (u,u),
\end{equation*}
and $u$ belongs to the closure of $(\mathcal{E},D_{0,\zeta }(S))$.

Finally, we prove that $D_{\zeta}(\mathcal{E})=K_{2}(\Omega)\cap
H^{1}(\Omega)$, which is not surprising, since the Navier's
$\zeta$-condition \eqref{nav-f}
that has to be satisfied for any $u\in D_{0,\zeta}(S)$ will be
``forgot'' when passing to the limit in $H^{1}(\Omega)$ (in which
the boundary values of the first derivative can not be retained).
Therefore, $D_{0,\zeta}(S)$ is dense in $K_{2}(\Omega)\cap
H^{1}(\Omega)$ in the $H^{1}$-norm.

To see this, consider the case that
$\Omega=\{(x_{1},x_{2},x_{3}):x_{3}>0\}$, and
$u=(u^{1},u^{2},u^{3})\in D_{0,\zeta}(S)\cap
C_{0}^{2}(\mathbb{R}^{3})$ such that $\nabla\cdot u=0$ and
$u^{3}(x_{1},x_{2},0)=0$. Then, for every $\varepsilon>0$, choose
\begin{eqnarray*}
(u_{\varepsilon}^{1}, u_\varepsilon^2, u_\varepsilon^3)
=(u^1,u^2,u^3)+x_3\mathcal{X}_{\{|x_{3}|<\varepsilon\}}(h_1,h_2,h_3).
\end{eqnarray*}
Then
\begin{eqnarray*}
\left. (\nabla\times u_{\varepsilon})^{1}\right|_{\Gamma} &=&\left.
\partial_{x_{3}}u^{2}\right|_{\Gamma}
=\partial_{x_3}u^{2}(x_{1},x_{2},0)+h_{2}(x_{1},x_{2})\text{,} \\
\left. (\nabla \times u_{\varepsilon})^{2}\right|_{\Gamma}
&=&-\left. \partial_{x_{3}}u^{1}\right|_{\Gamma}
=-\partial_{x_3}u^{1}(x_{1},x_{2},0)-h_{1}(x_{1},x_{2})\text{,} \\
\nabla\cdot u_{\varepsilon}
&=&\mathcal{X}_{\{|x_{3}|<\varepsilon\}}\left(x_{3}\nabla\cdot
h+h_{3}\right) \text{.}
\end{eqnarray*}
To match the Navier's $\zeta$-condition, we set
\begin{eqnarray*}
h_{j}(x_{1},x_{2})
&=&\frac{1}{\zeta}u^{j}(x_{1},x_{2},0)
-\partial_{x^3}u^{j}(x_{1},x_{2},0),\qquad j=1,2, \\
h_{3}(x_{1},x_{2}) &=&-x_{3}\left(\partial_{x_1} h_{1} +
\partial_{x_2} h_{2}\right) \text{.}
\end{eqnarray*}
Hence, $u_{\varepsilon}\in D_{\zeta}(\mathcal{E})$ and
$u_{\varepsilon}\rightarrow u$ in $H^{1}$, which concludes the
proof.
\end{proof}

\begin{corollary}\label{co-n-01}
$\left(\mathcal{E},D(\mathcal{E})\right)$ is a densely defined,
bounded below, and closed symmetric form on the Hilbert space
$K_{2}(\Omega)$. Moreover,
\begin{equation*}
\mathcal{E}(u,u)=\|\nabla u\|_{2}^{2}+\frac{1}{\zeta
}\|u\|_{L^{2}(\Gamma )}^{2}-\int_{\Gamma}\pi (u,u)\qquad\text{for
any $u\in D_\zeta(\mathcal{E})$},
\end{equation*}
and there exists $M(\varepsilon,\zeta)>0$ such that
\begin{equation}
\|\nabla u\|_{2}^{2}\leq (\mathcal{E}+\Lambda I)(u,u)\leq
(1+\varepsilon )\|\nabla u\|_{2}^{2}+
M(\varepsilon,\zeta)\|u\|_{2}^{2}\ \qquad\text{for all $u\in
D_\zeta(\mathcal{E})$}. \label{00-01}
\end{equation}
\end{corollary}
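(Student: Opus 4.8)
The plan is to harvest everything from Theorem \ref{th-n-01}, since Corollary \ref{co-n-01} is essentially a repackaging of its conclusions with one extra two-sided bound to verify. First I would note that items (iii) and (v) of Theorem \ref{th-n-01} already give that $(\mathcal{E}, D(\mathcal{E}))$ with $D(\mathcal{E}) = K_2(\Omega)\cap H^1(\Omega)$ is a densely defined, closed, symmetric form on $K_2(\Omega)$; and item (ii), applied with any fixed $\varepsilon\in(0,1)$ (say $\varepsilon=\tfrac12$), shows $\mathcal{E}(u,u)\geq \tfrac12\|\nabla u\|_2^2 - C\|u\|_2^2 \geq -C\|u\|_2^2$, so the form is bounded below. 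The explicit identity $\mathcal{E}(u,u) = \|\nabla u\|_2^2 + \tfrac1\zeta\|u\|_{L^2(\Gamma)}^2 - \int_\Gamma \pi(u,u)$ for $u\in D_\zeta(\mathcal{E})$ is exactly the diagonal case $w=u$ of \eqref{bi-0-21}, whose validity on all of $D_\zeta(\mathcal{E})$ is asserted in item (iii).

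It then remains to produce the constant $\Lambda$ and the two-sided bound \eqref{00-01}. For the lower bound $\|\nabla u\|_2^2 \leq (\mathcal{E}+\Lambda I)(u,u)$: from the identity above, $\mathcal{E}(u,u) + \Lambda\|u\|_2^2 = \|\nabla u\|_2^2 + \tfrac1\zeta\|u\|_{L^2(\Gamma)}^2 - \int_\Gamma\pi(u,u) + \Lambda\|u\|_2^2$. With $\lambda_1$ an upper bound for $\pi$ as in the proof of Theorem \ref{th-n-01}, one has $-\int_\Gamma\pi(u,u) \geq -\lambda_1\|u\|_{L^2(\Gamma)}^2$, so if $\lambda_1 \leq \tfrac1\zeta$ the boundary terms are already nonnegative and $\Lambda=0$ works; in general, using the trace inequality \eqref{sobolev-trace-embedding} to absorb $-\lambda_1\|u\|_{L^2(\Gamma)}^2 \geq -\varepsilon_0\|\nabla u\|_2^2 - \tfrac{C}{\varepsilon_0}\|u\|_2^2$ with a small enough $\varepsilon_0$ and choosing $\Lambda$ (depending on $\zeta$ and $\Omega$) to dominate $\tfrac{C}{\varepsilon_0}$, one gets $(\mathcal{E}+\Lambda I)(u,u) \geq (1-\varepsilon_0)\|\nabla u\|_2^2 + (\Lambda - \tfrac{C}{\varepsilon_0})\|u\|_2^2 \geq \|\nabla u\|_2^2$ after a harmless rescaling of $\Lambda$ (or, more cleanly, keep the nonnegative term $\tfrac1\zeta\|u\|_{L^2(\Gamma)}^2$ and only spend $\tfrac{\lambda_1\zeta}{1} $-worth of it, giving directly $(\mathcal{E}+\Lambda I)(u,u) \geq \|\nabla u\|_2^2$ with $\Lambda$ independent of $\varepsilon$).

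For the upper bound: again from the identity, $(\mathcal{E}+\Lambda I)(u,u) \leq \|\nabla u\|_2^2 + \tfrac1\zeta\|u\|_{L^2(\Gamma)}^2 + \Lambda\|u\|_2^2$ (dropping $-\int_\Gamma\pi(u,u) \leq |\lambda_1|\|u\|_{L^2(\Gamma)}^2$ into the boundary term, or bounding $|\pi|$ by a constant), and then the trace inequality \eqref{sobolev-trace-embedding} applied to the $\Gamma$-norm with the given $\varepsilon$ yields $\tfrac1\zeta\|u\|_{L^2(\Gamma)}^2 \leq \varepsilon\|\nabla u\|_2^2 + \tfrac{C}{\varepsilon\zeta}\|u\|_2^2$, whence $(\mathcal{E}+\Lambda I)(u,u) \leq (1+\varepsilon)\|\nabla u\|_2^2 + M(\varepsilon,\zeta)\|u\|_2^2$ with $M(\varepsilon,\zeta) = \Lambda + \tfrac{C}{\varepsilon\zeta}$ (absorbing the $-\int\pi$ contribution into $M$ as well). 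I do not anticipate a genuine obstacle here: the only mild subtlety is bookkeeping the dependence of $\Lambda$ and $M$ on $\zeta$ correctly — $\Lambda$ may be taken uniform in $\varepsilon$ but will depend on $\zeta$ through $\lambda_1\zeta$, while $M$ depends on both $\varepsilon$ and $\zeta$ — and making sure the density of $D_{0,\zeta}(S)$ in $D(\mathcal{E})$ (item (v)) is invoked so that the identity and inequalities, first proved on $D_{0,\zeta}(S)$, extend by continuity to all of $D(\mathcal{E})$.
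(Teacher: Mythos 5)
Your treatment of the structural claims and of the upper bound in \eqref{00-01} is exactly the paper's: density, symmetry, closedness and the identity for $\mathcal{E}(u,u)$ are read off from Theorem \ref{th-n-01} (ii), (iii), (v), and the upper bound is obtained by bounding $-\int_\Gamma\pi(u,u)$ by $C_0\|u\|^2_{L^2(\Gamma)}$ (a lower bound on $\pi$) and then absorbing the whole boundary term $\big(\tfrac1\zeta+C_0\big)\|u\|^2_{L^2(\Gamma)}$ via the trace inequality \eqref{sobolev-trace-embedding} with the given $\varepsilon$. That part is correct and matches the paper's proof.

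The one place you go beyond the paper --- the lower bound $\|\nabla u\|_2^2\le(\mathcal{E}+\Lambda I)(u,u)$ --- is also the one place your argument does not close. After writing $-\int_\Gamma\pi(u,u)\ge-\varepsilon_0\|\nabla u\|_2^2-\tfrac{C}{\varepsilon_0}\|u\|_2^2$ you arrive at $(1-\varepsilon_0)\|\nabla u\|_2^2+\big(\Lambda-\tfrac{C}{\varepsilon_0}\big)\|u\|_2^2$, and no ``harmless rescaling of $\Lambda$'' turns this into $\|\nabla u\|_2^2$: the deficit is $\varepsilon_0\|\nabla u\|_2^2$, which cannot be dominated by any multiple of $\|u\|_2^2$ (consider $u$ concentrating near the boundary with $\|u\|_2$ small and $\|\nabla u\|_2$ large). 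Your ``cleaner'' alternative of spending part of the nonnegative term $\tfrac1\zeta\|u\|^2_{L^2(\Gamma)}$ works only when $\pi\le\tfrac1\zeta$, which is precisely the case the paper singles out as $\Lambda=0$ in Theorem \ref{th-n-01}(iv). For general $\pi$ the constant-$1$ lower bound is genuinely delicate: the paper's own proof of the corollary establishes only the upper half of \eqref{00-01} and in effect takes the lower half as the defining property of $\Lambda$ (see the remark following Definition \ref{4.7-a} and its use in the proof of Theorem \ref{th-n-07}). So you should either restrict to $\pi\le\tfrac1\zeta$, or content yourself with $(1-\varepsilon_0)\|\nabla u\|_2^2\le(\mathcal{E}+\Lambda(\varepsilon_0) I)(u,u)$, which is what Theorem \ref{th-n-01}(ii) actually delivers and what the subsequent arguments really require.
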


\begin{proof}
Suppose that $\pi \geq -C_{0}$ for some $C_{0}\geq 0$. Then
\begin{equation*}
\mathcal{E}(u,u)\leq \|\nabla
u\|_{2}^{2}+\big(\frac{1}{\zeta}+C_{0}\big)\|u\|_{L^{2}(\Gamma)}^{2}\text{.}
\end{equation*}
The Sobolev imbedding yields that, for every $\varepsilon\in (0,1)$,
there exists  $C_{0}>0$ such that
\begin{equation*}
\big(\frac{1}{\zeta }+C_{0}\big)\|u\|_{L^{2}(\Gamma )}^{2}\leq
\varepsilon \|\nabla u\|_{2}^{2}+M(\varepsilon,\zeta)\|u\|_{2}^{2}
\end{equation*}
so that (\ref{00-01}) follows.
\end{proof}

\begin{definition}\label{4.7-a}
Let $\zeta>0$. Then the unique self-adjoint operator on
$K_{2}(\Omega)$ associated with the closed symmetric form
$(\mathcal{E}, D_\zeta(\mathcal{E}))$ is denoted again by $S$, with
its domain $D_{\zeta}(S)$, called the Stokes operator with Navier's
$\zeta$-condition, or simply the Stokes operator if no confusion may
arise.
\end{definition}

According to Definition \ref{4.7-a}, $(S, D_{\zeta}(S))$ is the
unique self-adjoint operator on $K_{2}(\Omega)$ such that
\begin{equation*}
\mathcal{E}(u,w)=-\int_{\Omega}\langle Su,w\rangle\qquad\text{for
any}\,\,\, u\in D_{\zeta}(S), \,\,  w\in D_\zeta(S)\text{,}
\end{equation*}
and
\begin{equation*}
D_{0,\zeta}(S)\subset D_{\zeta}(S)\subset H^{1}(\Omega)\cap
K_{2}(\Omega)\text{.}
\end{equation*}
Moreover, if $u\in D_{\zeta}(S)$, then $u\in H^{1}(\Omega)$ with
$\nabla\cdot u=0$ and $\left. u^{\bot}\right|_{\Gamma}=0$. In
particular, there exists $\Lambda\ge 0$ such that
 $-S+\Lambda I$ is positive definite (when
$\pi\leq\frac{1}{\zeta}$, $\Lambda=0$).

To end this section, we establish an $L^{2}$-estimate for the total
derivative of $S(u)$.

\begin{lemma}\label{23-01-1}
Let $p_{u}$ be the unique solution (up to a constant) of the Neumann
problem:
\begin{equation}
\Delta p_{u}=0\text{,}\qquad \partial_\nu
p_{u}\big|_{\Gamma}=\frac{1}{\zeta}\nabla^{\Gamma}\cdot u
-2\nabla^{\Gamma}\cdot \pi(u) \label{23-4-2}
\end{equation}
for $u\in H^{2}(\Omega)$ satisfying $\left.
u^{\bot}\right|_{\Gamma}=0$. Then, for every $\varepsilon >0$, there
exists a constant $M(\varepsilon, \zeta)$ depending only on
$\varepsilon$, $\zeta$, and the domain $\Omega $ such that
\begin{equation}
\|\nabla p_{u}\|_{2}\leq \varepsilon \|\nabla\times\nabla\times
u\|_{2}+ M(\varepsilon, \zeta)\|u\|_{2}\text{.}  \label{23-02-2}
\end{equation}
\end{lemma}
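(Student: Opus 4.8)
The plan is to reduce the estimate to the boundary, to exploit that the Neumann datum of $p_{u}$ is a tangential divergence — hence one derivative smoother than a generic $H^{1/2}(\Gamma)$-trace — and finally to convert the resulting $H^{1}(\Omega)$-bound on $u$ into the desired form by interpolation together with the $\zeta$-uniform elliptic estimate \eqref{24-00-1}.

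First I would normalize $\int_{\Omega}p_{u}=0$. Since $u^{\bot}|_{\Gamma}=0$, the trace $u|_{\Gamma}$ is tangential, so the Neumann datum in \eqref{23-4-2} is
\[
g:=\frac1\zeta\nabla^{\Gamma}\!\cdot u-2\nabla^{\Gamma}\!\cdot\pi(u)
=\nabla^{\Gamma}\!\cdot\Big(\tfrac1\zeta u|_{\Gamma}-2\pi(u|_{\Gamma})\Big),
\]
the surface divergence of a tangential field lying in $H^{1/2}(\Gamma)$; in particular $\int_{\Gamma}g=0$ (the solvability condition for \eqref{23-4-2}), and $\|g\|_{H^{-1/2}(\Gamma)}\le C(1+\tfrac1\zeta)\|u|_{\Gamma}\|_{H^{1/2}(\Gamma)}$ with $C=C(\Omega)$. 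Testing the weak form of $\Delta p_{u}=0,\ \partial_{\nu}p_{u}=g$ against $p_{u}$, and using the trace and Poincaré inequalities (the latter since $\int_{\Omega}p_{u}=0$), gives $\|\nabla p_{u}\|_{2}^{2}=\langle g,p_{u}|_{\Gamma}\rangle\le C\|g\|_{H^{-1/2}(\Gamma)}\|\nabla p_{u}\|_{2}$, hence
\[
\|\nabla p_{u}\|_{2}\le C\Big(1+\frac1\zeta\Big)\|u|_{\Gamma}\|_{H^{1/2}(\Gamma)}.
\]
(Equivalently, and closer in spirit to Sections 2--3: integrate by parts on the closed surface $\Gamma$ in $\|\nabla p_{u}\|_{2}^{2}=\int_{\Gamma}g\,p_{u}$ to obtain $-\int_{\Gamma}\langle\tfrac1\zeta u|_{\Gamma}-2\pi(u|_{\Gamma}),\nabla^{\Gamma}p_{u}\rangle$, bound it by $C(1+\tfrac1\zeta)\|u\|_{L^{2}(\Gamma)}\|\nabla^{\Gamma}p_{u}\|_{L^{2}(\Gamma)}$, and use the trace-interpolation inequality $\|v\|_{L^{2}(\Gamma)}^{2}\le C\|v\|_{H^{1}(\Omega)}\|v\|_{L^{2}(\Omega)}$ for $v=u$ and $v=\nabla p_{u}$ together with the elliptic bound $\|p_{u}\|_{H^{2}(\Omega)}\le C\|g\|_{H^{1/2}(\Gamma)}$; one lands on the same conclusion.)

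Next I would produce the smallness. The trace theorem gives $\|u|_{\Gamma}\|_{H^{1/2}(\Gamma)}\le C\|u\|_{H^{1}(\Omega)}$, and the interpolation $\|u\|_{H^{1}(\Omega)}\le C\|u\|_{H^{2}(\Omega)}^{1/2}\|u\|_{L^{2}(\Omega)}^{1/2}$ then yields, for every $\eta>0$,
\[
\|\nabla p_{u}\|_{2}\le C\Big(1+\frac1\zeta\Big)\|u\|_{H^{2}(\Omega)}^{1/2}\|u\|_{L^{2}(\Omega)}^{1/2}
\le\eta\,\|u\|_{H^{2}(\Omega)}+C_{\eta}\Big(1+\frac1\zeta\Big)^{2}\|u\|_{L^{2}(\Omega)}.
\]
Finally, for such $u$ (divergence-free, satisfying \eqref{kinematic-1}--\eqref{nav-f}), estimate \eqref{24-00-1} gives $\|u\|_{H^{2}}^{2}\le C\|(\nabla\times\nabla\times u,\,\nabla\times u,\,u)\|_{2}^{2}$; moreover the boundary identity \eqref{4-30-03}, used as in the proof of Theorem \ref{th-n-01}, gives $\|\nabla\times u\|_{2}^{2}=\int_{\Omega}\langle\nabla\times\nabla\times u,u\rangle-\tfrac1\zeta\|u\|_{L^{2}(\Gamma)}^{2}+2\int_{\Gamma}\pi(u,u)\le\|\nabla\times\nabla\times u\|_{2}\|u\|_{2}+C\|u\|_{L^{2}(\Gamma)}^{2}$, and absorbing $\|u\|_{L^{2}(\Gamma)}^{2}$ via \eqref{sobolev-trace-embedding} and \eqref{est-june26-01a} yields $\|u\|_{H^{2}(\Omega)}\le C(\|\nabla\times\nabla\times u\|_{2}+\|u\|_{2})$ with $C=C(\Omega)$. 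Substituting this and choosing $\eta=\eta(\varepsilon,\zeta)$ small enough produces \eqref{23-02-2}, with $M(\varepsilon,\zeta)$ depending only on $\varepsilon$, $\zeta$ and $\Omega$.

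I expect the crux to be exactly this last absorption: the factor $\varepsilon$ in front of $\|\nabla\times\nabla\times u\|_{2}$ is available only because the first part of the argument controls $\|\nabla p_{u}\|_{2}$ by $\|u\|_{H^{1}(\Omega)}$ rather than by $\|u\|_{H^{2}(\Omega)}$ — precisely the half-derivative gained from writing the Neumann datum as a tangential divergence — and this gain is converted into smallness by interpolation against the $\zeta$-uniform estimate \eqref{24-00-1}. The remaining ingredients (the energy estimate for the Neumann problem, the trace and interpolation inequalities, and the integration by parts on the closed surface $\Gamma$) are routine.
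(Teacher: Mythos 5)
Your argument is correct and takes essentially the same route as the paper's: both reduce $\|\nabla p_{u}\|_{2}^{2}$ to the boundary pairing $\int_{\Gamma}p_{u}\,\partial_{\nu}p_{u}$, exploit that the Neumann datum involves only first tangential derivatives of $u$ (so trace interpolation yields a bound of the form $\varepsilon\|\nabla^{2}u\|_{2}+M\|u\|_{H^{1}}$), and then convert to $\varepsilon\|\nabla\times\nabla\times u\|_{2}+M\|u\|_{2}$ via the elliptic estimates of Section 3. Your explicit final absorption step, which uses \eqref{24-00-1} and \eqref{4-30-03} and hence the divergence-free, kinematic, and Navier conditions rather than only $u^{\bot}|_{\Gamma}=0$, supplies exactly what the paper compresses into ``follows from \eqref{07june06-01}''.
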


\begin{proof}
By integration by parts, one obtains
\begin{eqnarray*}
\|\nabla p_{u}\|_{2}^{2}&=&-\int_{\Omega}p_{u}\Delta
p_{u}+\int_{\Gamma}p_{u}\partial_\nu p_{u} \\
&=&\frac{1}{\zeta}\int_{\Gamma}p_{u}\nabla^{\Gamma}\cdot
u-2\int_{\Gamma}p_{u}\nabla^{\Gamma}\cdot \pi(u) \\
&\leq & \|p_{u}\|_{L^{2}(\Gamma)}M\big(\varepsilon\|\nabla^2u\|_{2}
  +\|u\|_{H^{1}}\big) \\
&\leq &\|\nabla p_{u}\|_{L^{2}(\Gamma)}\big(\varepsilon
\|\nabla^2u\|_{2}+\|u\|_{H^{1}}\big)
\end{eqnarray*}
for some constant $M>0$ which may depend on $\zeta$ and
$\varepsilon$. Then (\ref{23-02-2}) follows from
(\ref{07june06-01}).
\end{proof}

\begin{proposition}\label{prop-23-41}
Let $u\in D_{0,\zeta}(S)$, $\omega =\nabla\times u$, and
$\psi=\nabla\times\omega =-\Delta u$. Then
\begin{equation}
\|S(u)\|_{H^{1}}\leq M\big(\|\nabla \times \psi\|_{2}
+\|(\psi,u)\|_{2}\big), \label{23-02-1}
\end{equation}
where $M(\varepsilon, \zeta)>0$ depend only on $\varepsilon$,
$\zeta$, and $\Omega$.
\end{proposition}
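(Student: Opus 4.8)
The plan is to reduce the statement to the elliptic estimate \eqref{07june06-01} applied to the vector field $S(u)$ itself. First I would record that, since $u\in D_{0,\zeta}(S)$ is divergence-free, $\Delta u=-\nabla\times(\nabla\times u)=-\psi$, so $\nabla\cdot(\Delta u)=-\nabla\cdot\psi=0$. Writing the Helmholtz decomposition $\Delta u=S(u)+\nabla p_{u}$, Lemma \ref{le-0-2} then collapses to $\Delta p_{u}=0$ with $\partial_{\nu}p_{u}|_{\Gamma}=\frac{1}{\zeta}\nabla^{\Gamma}\cdot u-2\nabla^{\Gamma}\cdot\pi(u)$; that is, $p_{u}$ is exactly the harmonic function of Lemma \ref{23-01-1}. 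Since $u$ is smooth, this Neumann datum is smooth on $\Gamma$, so by elliptic regularity $p_{u}\in C^{\infty}(\overline{\Omega})$ and hence $S(u)=-\psi-\nabla p_{u}\in C^{\infty}(\overline{\Omega})\subset H^{2}(\Omega)$. This qualitative membership is what makes \eqref{07june06-01} applicable to $S(u)$ below.

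Next I would assemble the three ingredients of \eqref{07june06-01} for the field $S(u)$. Because $S(u)=P_{\infty}(\Delta u)\in K_{2}(\Omega)$, Lemma \ref{le-0-1} gives $\nabla\cdot S(u)=0$ and $S(u)^{\bot}|_{\Gamma}=0$, so $S(u)$ satisfies the kinematic condition required in \eqref{07june06-01} (with $\Gamma_{2}=\emptyset$). Moreover $\nabla\times S(u)=\nabla\times\Delta u=-\nabla\times\psi$, using $\nabla\times\nabla p_{u}=0$ and $\Delta u=-\psi$. Hence \eqref{07june06-01} yields
$$
\|S(u)\|_{H^{1}}\leq C\big(\|\nabla\times S(u)\|_{2}+\|\nabla\cdot S(u)\|_{2}+\|S(u)\|_{2}\big)=C\big(\|\nabla\times\psi\|_{2}+\|S(u)\|_{2}\big),
$$
with $C=C(\Omega)$.

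It remains to absorb $\|S(u)\|_{2}$. Writing $S(u)=-\psi-\nabla p_{u}$ and applying Lemma \ref{23-01-1} (i.e. estimate \eqref{23-02-2}) with any fixed $\varepsilon>0$ gives $\|S(u)\|_{2}\leq\|\psi\|_{2}+\|\nabla p_{u}\|_{2}\leq(1+\varepsilon)\|\psi\|_{2}+M\|u\|_{2}$ with $M=M(\varepsilon,\zeta)$; alternatively one may simply use $\|S(u)\|_{2}=\|P_{\infty}(\Delta u)\|_{2}\leq\|\Delta u\|_{2}=\|\psi\|_{2}$. Either way $\|S(u)\|_{2}\leq M\,\|(\psi,u)\|_{2}$, and substituting into the displayed inequality yields $\|S(u)\|_{H^{1}}\leq M\big(\|\nabla\times\psi\|_{2}+\|(\psi,u)\|_{2}\big)$, as claimed. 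I do not expect a genuine obstacle here: the only point requiring care is the first step — checking that $S(u)$ actually belongs to $H^{2}(\Omega)$ and inherits the kinematic boundary condition, so that \eqref{07june06-01} may legitimately be invoked — after which the bound is an immediate consequence of \eqref{07june06-01}, the contraction property of the Helmholtz projection $P_{\infty}$, and Lemma \ref{23-01-1} (which supplies the $\zeta$-dependence of the constant).
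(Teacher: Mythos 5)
Your argument is correct and follows essentially the same route as the paper: apply the elliptic estimate \eqref{07june06-01} to $S(u)$ (which is divergence-free and tangent to $\Gamma$), identify $\nabla\times S(u)=-\nabla\times\psi$ via the decomposition $S(u)=-\psi-\nabla p_{u}$, and control $\|S(u)\|_{2}$ by $\|(\psi,u)\|_{2}$ using Lemma \ref{23-01-1}. Your added observation that one can bypass Lemma \ref{23-01-1} entirely with the contraction bound $\|S(u)\|_{2}=\|P_{\infty}(\Delta u)\|_{2}\leq\|\psi\|_{2}$ is a small but genuine simplification, and your preliminary check that $S(u)\in H^{2}(\Omega)$ is a point of rigor the paper leaves implicit.
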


\begin{proof}
Since $\nabla\cdot S(u)=0$ and $\left.
S(u)^{\bot}\right|_{\Gamma}=0$, according to (\ref{07june06-01}),
\begin{equation*}
\|S(u)\|_{H^{1}}^{2}\leq C\|(S(u), \nabla\times S(u))\|_{2}^{2},
\end{equation*}
where $C>0$ depends only on $\Omega$.

Let $S(u)=-\psi -\nabla p$, where $p$ solves (\ref{23-4-2}). Then
$\nabla\times S(u)=-\nabla \times \psi$ so that
\begin{equation*}
\|S(u)\|_{H^{1}}^{2}=C\|(\nabla\times\psi, S(u))\|_{2}^{2}.
\end{equation*}
Together with (\ref{23-02-2}), (\ref{23-02-1}) follows immediately.
\end{proof}

\subsection{Spectral theory of the Stokes operator subject to the kinematic and Navier
boundary conditions \eqref{kinematic-1}--\eqref{nav-f}}
To establish other important properties of the Stokes operator
$(S,D_{\zeta }(S))$, we study the boundary problem of the Stokes
equation:
\begin{equation}
\lambda u+\Delta u-\nabla p=f\text{,}\qquad \nabla\cdot u=0
\label{1.0.8}
\end{equation}
subject to the boundary conditions
\eqref{kinematic-1}--\eqref{nav-f},
where $f\in K_{2}(\Omega )\cap C^{\infty }(\Omega)$ and $\lambda \in
\mathbb{R}$ is a constant.

Taking the divergence to both sides of equation (\ref{1.0.8}) yields
that the scalar function $p$ satisfies the Neumann problem of the
Laplace equation:
\begin{equation}
\Delta p=0\text{,} \qquad  \partial_\nu p\big|_{\Gamma}
=\frac{1}{\zeta}\nabla^{\Gamma}\cdot u-2\nabla^{\Gamma}\cdot\pi(u)
\text{.}  \label{1.0.10}
\end{equation}

Let $\Lambda>0$ be the constant such that $-S+\Lambda I\ge 0$. Then,
for $\lambda >\Lambda$, let $R_{\lambda}$ denote the resolvent,
i.e., $R_{\lambda}=\left(\lambda I-S\right)^{-1}$, which is a
bounded linear operator on $K_{2}(\Omega)$.

\begin{theorem}\label{th-n-07}
For any $\lambda >\Lambda$, $R_{\lambda}$ is a compact operator on
$K_{2}(\Omega)$.
\end{theorem}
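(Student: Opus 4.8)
The plan is to establish compactness of $R_{\lambda}$ by showing that it maps bounded sets of $K_{2}(\Omega)$ into relatively compact sets, and the natural route is to prove that $R_{\lambda}$ is in fact bounded from $K_{2}(\Omega)$ into $H^{1}(\Omega)\cap K_{2}(\Omega)$, after which the Rellich--Kondrachov theorem (the boundary $\Gamma$ is compact and smooth, so $\Omega$ is bounded with smooth boundary) gives that the embedding $H^{1}(\Omega)\hookrightarrow L^{2}(\Omega)$ is compact and hence $R_{\lambda}$ is compact. So the real content is the a priori estimate $\|R_{\lambda}f\|_{H^{1}}\le C_{\lambda}\|f\|_{2}$.

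First I would fix $f\in K_{2}(\Omega)$ and set $u=R_{\lambda}f\in D_{\zeta}(S)$, so that $\lambda u-Su=f$; testing against $u$ in $K_{2}(\Omega)$ and using the definition of the form gives $\lambda\|u\|_{2}^{2}+\mathcal{E}(u,u)=\int_{\Omega}\langle f,u\rangle$. By part (ii) of Theorem~\ref{th-n-01} (or Corollary~\ref{co-n-01}), $\mathcal{E}(u,u)\ge(1-\varepsilon)\|\nabla u\|_{2}^{2}-C(\varepsilon,\Omega)\|u\|_{2}^{2}$; choosing $\varepsilon=\tfrac12$ and absorbing, together with $\lambda>\Lambda$, yields $\|u\|_{2}^{2}+\|\nabla u\|_{2}^{2}\le C_{\lambda}\|f\|_{2}^{2}$ after a Cauchy--Schwarz and Young step on the right-hand side. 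This already shows $R_{\lambda}\colon K_{2}(\Omega)\to H^{1}(\Omega)\cap K_{2}(\Omega)$ is bounded.

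Then the compactness follows: if $\{f_{n}\}$ is bounded in $K_{2}(\Omega)$, then $\{u_{n}=R_{\lambda}f_{n}\}$ is bounded in $H^{1}(\Omega)$, hence (by the compact Rellich embedding $H^{1}(\Omega)\hookrightarrow\hookrightarrow L^{2}(\Omega)$) has a subsequence converging in $L^{2}(\Omega)$, and the limit lies in $K_{2}(\Omega)$ since $K_{2}(\Omega)$ is closed in $L^{2}(\Omega)$. Thus $R_{\lambda}$ is a compact operator on $K_{2}(\Omega)$.

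The step I expect to be the only genuine obstacle — and it is mild here — is being careful that the energy identity $\lambda\|u\|_{2}^{2}+\mathcal{E}(u,u)=\int_{\Omega}\langle f,u\rangle$ is legitimate for $u\in D_{\zeta}(S)$ rather than merely for $u\in D_{0,\zeta}(S)$; this is exactly the content of Definition~\ref{4.7-a} and the remark following it (the closed form $\mathcal{E}$ on $D_{\zeta}(\mathcal{E})=D(\mathcal{E})$ represents $-S$), so no approximation argument is needed beyond invoking that. Everything else — the coercivity estimate and the Rellich embedding — is quoted directly from earlier in the paper or is standard, so the proof is short.
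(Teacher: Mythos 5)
Your proof is correct. The energy estimate at its core is the same as the paper's: test $\lambda u-Su=f$ against $u$, use $-S+\Lambda I\ge 0$ to control $\|u\|_2$ and the G\aa rding-type lower bound for $\mathcal{E}$ to control $\|\nabla u\|_2$, arriving at the paper's inequality \eqref{e-1.-1}. Where you diverge is that you stop at the $H^1$ level and invoke the Rellich embedding $H^1(\Omega)\hookrightarrow\hookrightarrow L^2(\Omega)$, whereas the paper continues to a full $H^2$ bound $\|u\|_{H^2}\le M\|f\|_2$ via the elliptic estimate of Theorem \ref{th-66-1} together with the pressure estimate for the Neumann problem \eqref{1.0.10}. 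For the compactness assertion alone your truncation is a genuine simplification, and it has the added virtue of working entirely at the level of the closed form on $D_\zeta(\mathcal{E})$ --- you avoid the paper's somewhat awkward ``suppose in addition that $u\in D_{0,\zeta}(S)$'' step, which is needed to legitimize applying the boundary-condition-dependent elliptic estimate to $u=R_\lambda f$. What the paper's extra work buys is the $H^2$ regularity of the resolvent, which is what underlies the later claim (Theorem \ref{th-n-21}) that the eigenfunctions $a_n$ lie in $D_{0,\zeta}(S)$ and genuinely satisfy the boundary conditions \eqref{kinematic-1}--\eqref{nav-f}; so the $H^2$ estimate is not wasted, it is just not needed for compactness. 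One small point to tighten: when you ``absorb'' using \eqref{0.0.2} you should make explicit that the $L^2$ bound $\|u\|_2\le(\lambda-\Lambda)^{-1}\|f\|_2$ is obtained first from $-S+\Lambda I\ge 0$, since the constant $C(\varepsilon,\Omega)$ in \eqref{0.0.2} need not be dominated by $\lambda$; with that ordering the argument closes cleanly.
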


\begin{proof}
Let $f\in K_{2}(\Omega )$ and $u=R_{\lambda }f$. Then $u\in D_{\zeta
}(S)$ and
\begin{equation*}
\left( \lambda I-S\right) u=f\text{. }
\end{equation*}
Suppose in addition that $u\in D_{0,\zeta}(S)$, so that $\nabla\cdot
u=0$, and $u$ satisfies \eqref{kinematic-1}--\eqref{nav-f}. Let
$\Delta u=Su+\nabla p$. Then
\begin{equation}
\lambda u-\Delta u+\nabla p=f\text{, }  \label{0.0.-4}
\end{equation}
and $p$ solves the Neumann problem (\ref{1.0.10}). Hence, for every
$\varepsilon >0$,
\begin{equation*}
\|\nabla p\|_{2}\leq \varepsilon \|\nabla^2u\|_{2} +M(\varepsilon,
\zeta)\|u\|_{H^{1}}
\end{equation*}
for some constant $M(\varepsilon, \zeta)$. It is easy to devise the
energy estimate for $u$. Indeed, since
\begin{equation*}
\lambda \langle u,  u\rangle-\langle u,Su\rangle =\langle
u,f\rangle,
\end{equation*}
then
\begin{equation*}
\lambda \|u\|_{2}^{2}-\int_{\Omega}\langle u, Su\rangle
=\int_{\Omega}\langle u, f\rangle \leq \|u\|_{2}\|f\|_{2}\text{.}
\end{equation*}
Furthermore, since $-S+\Lambda\ge 0$, i.e., $-\int_{\Omega}\langle
u, Su\rangle \geq \|\nabla u\|_{2}^{2}-\Lambda \|u\|_{L^2}$, we have
\begin{equation*}
(\lambda-\Lambda)\|u\|_{2}^{2}+\|\nabla u\|_{2}^{2}\leq
\|u\|_{2}\|f\|_{2}.
\end{equation*}
Therefore, we have
\begin{equation}
\frac{3(\lambda-\Lambda)}{4}\|u\|_{2}^{2}+\|\nabla u\|_{2}^{2} \leq
\frac{1}{\lambda-\Lambda}\|f\|_{2}^{2}\text{ .}  \label{e-1.-1}
\end{equation}
Next, we  estimate the second-order derivative of $u$. Since $u$
satisfies \eqref{kinematic-1}--\eqref{nav-f}, according to the
elliptic estimate (Theorem 3.1):
\begin{equation}
\|\nabla^2
u\|_{2}^{2}+\frac{2}{\zeta}\|\nabla^{\Gamma}u\|_{L^{2}(\Gamma)}^{2}
\leq C\left( \|\Delta u\|_{2}^{2}+\|u\|_{H^{1}}^{2}\right) \text{,}
\label{0.0.3-1}
\end{equation}
then
\begin{eqnarray*}
\|\nabla^2
u\|_{2}^{2}+\frac{2}{\zeta}\|\nabla^{\Gamma}u\|_{L^{2}(\Gamma)}^{2}
&\leq &C\left(\|\Delta u\|_{2}^{2}+\|u\|_{H^{1}}^{2}\right)
\\
&\leq &C\left(\|\nabla
p\|_{2}^{2}+\|f\|_{2}^{2}+\|u\|_{H^{1}}^{2}\right)
\\
&\leq &\varepsilon \|\nabla^2 u\|_{2}^{2}+M(\varepsilon,
\zeta)\left(\|u\|_{H^{1}}^{2}+\|f\|_{2}^{2}\right) \text{.}
\end{eqnarray*}
It follows that
\begin{eqnarray*}
\|\nabla^2 u\|_{2}^{2}\leq M(\varepsilon, \zeta, \lambda)\left(
\|u\|_{H^{1}}^{2}+\|f\|_{2}^{2}\right) \leq M(\varepsilon, \zeta,
\lambda,\Lambda)\|f\|_{2}^{2}\text{,}
\end{eqnarray*}
so that
\begin{equation}
\|u\|_{H^{2}}\leq M(\zeta, \lambda, \Lambda)\|f\|_{2}^{2}
\label{0.0.6}
\end{equation}
for some constant $M>0$ depending only on $\zeta, \lambda$, and
$\Lambda$. Therefore, $R_{\lambda}$ is compact.
\end{proof}

\begin{theorem}\label{th-n-21}
The spectrum of the Stokes operator $(S, D_{\zeta}(S))$ with
Navier's $\zeta$-condition \eqref{nav-f} is discrete and belongs to
$(-\infty, \Lambda]$ for some constant
$\Lambda=\Lambda(\Omega,\zeta)$. The eigenvalues
 $\lambda_{j}\le \Lambda$ can be ordered as
\begin{equation*}
\Lambda\geq \lambda_{0}\geq \lambda_{1}\geq \cdots \geq
\lambda_{n}\geq \cdots \text{, \ } \quad \lambda_{n}\downarrow
-\infty.
\end{equation*}
Moreover, there are eigenfunctions $a_n$: $Sa_{n}=\lambda_{n}a_{n}$,
where $a_{n}\in D_{0,\zeta }(S)$ (so that $a_{n}$ satisfy
\eqref{kinematic-1}--\eqref{nav-f}) such that $\{a_{n}\,:\,n\geq
0\}$ is a complete orthonormal basis of $K_{2}(\Omega)$. In
particular, when $\pi\le \frac{1}{\zeta}$, $\Lambda=0$, i.e.,
$\lambda_j\leq 0$ for $j=0,1,2,\cdots$.
\end{theorem}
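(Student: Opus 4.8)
The plan is to deduce Theorem \ref{th-n-21} from the standard spectral theorem for self-adjoint operators with compact resolvent, now that Theorem \ref{th-n-07} has established compactness of $R_{\lambda}$ for $\lambda>\Lambda$. First I would recall that, by Definition \ref{4.7-a} and Corollary \ref{co-n-01}, the Stokes operator $(S,D_{\zeta}(S))$ is the self-adjoint operator on the Hilbert space $K_{2}(\Omega)$ associated with the closed, densely defined, symmetric, bounded-below form $(\mathcal{E},D(\mathcal{E}))$; in particular $-S+\Lambda I$ is a nonnegative self-adjoint operator for the $\Lambda\ge 0$ produced there (with $\Lambda=0$ when $\pi\le\frac{1}{\zeta}$, by part (iv) of Theorem \ref{th-n-01}). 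Hence for $\lambda>\Lambda$ the resolvent $R_{\lambda}=(\lambda I-S)^{-1}$ exists as a bounded self-adjoint operator, and by Theorem \ref{th-n-07} it is compact.

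Next I would invoke the spectral theorem for compact self-adjoint operators applied to $R_{\lambda_0}$ for one fixed $\lambda_0>\Lambda$: there is a complete orthonormal basis $\{a_n\}_{n\ge 0}$ of $K_{2}(\Omega)$ consisting of eigenfunctions of $R_{\lambda_0}$, with real eigenvalues $\mu_n\to 0$. Since $R_{\lambda_0}$ is injective (it is a resolvent), all $\mu_n\neq 0$, and moreover $\mu_n>0$ because $\lambda_0 I-S\ge (\lambda_0-\Lambda)I>0$, so $R_{\lambda_0}$ is positive. Translating back via $S a_n=(\lambda_0-\mu_n^{-1})a_n$, we get $S a_n=\lambda_n a_n$ with $\lambda_n:=\lambda_0-\mu_n^{-1}$; since $\mu_n\downarrow 0^{+}$ through a sequence, $\lambda_n\downarrow-\infty$, and after reordering we obtain the monotone enumeration $\Lambda\ge\lambda_0\ge\lambda_1\ge\cdots\downarrow-\infty$. (The bound $\lambda_n\le\Lambda$ is exactly the statement $-S+\Lambda I\ge 0$, i.e.\ $\langle(\Lambda I-S)a_n,a_n\rangle=(\Lambda-\lambda_n)\|a_n\|_2^2\ge 0$.) The discreteness of the spectrum and the absence of essential spectrum below $\Lambda$ is the standard consequence of compact resolvent. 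Finally, to see that the eigenfunctions lie in the smooth core $D_{0,\zeta}(S)$, I would run the elliptic bootstrap: $a_n\in D_{\zeta}(S)\subset H^1(\Omega)\cap K_2(\Omega)$ solves $\lambda_n a_n-\Delta a_n+\nabla p=0$ weakly with $p$ solving the Neumann problem \eqref{1.0.10}; the interior and boundary elliptic regularity for this Stokes-type system — using the estimates of Section 3 (Theorem \ref{th-66-1}, Corollary \ref{the01}) iterated together with the Neumann estimate of Lemma \ref{23-01-1} and Proposition \ref{prop-23-41} — upgrades $a_n$ to $H^k(\Omega)$ for every $k$, hence $a_n\in C^{\infty}(\Omega)$, and the regained boundary traces show $a_n$ satisfies \eqref{kinematic-1}--\eqref{nav-f} in the classical sense, i.e.\ $a_n\in D_{0,\zeta}(S)$.

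The main obstacle is the last step: the a priori estimates derived so far (Theorem \ref{th-n-07}) only give $a_n\in H^2(\Omega)$, whereas membership in $D_{0,\zeta}(S)$ requires $C^{\infty}$ regularity up to the boundary and the \emph{pointwise} validity of the Navier condition \eqref{nav-f}. Carrying out the higher regularity requires differentiating the Stokes system and the Neumann problem for $p$ tangentially along $\Gamma$ and controlling the extra curvature terms that the Navier boundary condition generates — exactly the kind of third-derivative estimate the introduction flags (Theorem \ref{le-23-6}, Corollary \ref{co-23-31}). I would therefore either cite the forthcoming Section-4 estimates for this bootstrap, or, more economically, note that the equation $S a_n=\lambda_n a_n$ together with $\|S a_n\|_{H^1}\le M(\|\nabla\times\psi_n\|_2+\|(\psi_n,a_n)\|_2)$ from Proposition \ref{prop-23-41} and induction on the order of differentiation yields $a_n\in\bigcap_k H^k$, the boundary conditions being preserved under these $H^k$-convergent approximations since traces of all derivatives are now continuous. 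The positivity/monotonicity bookkeeping and the reduction to the compact self-adjoint spectral theorem are routine; the regularity of eigenfunctions is where the real work lies.
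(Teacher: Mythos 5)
Your proposal is correct and follows essentially the same route as the paper, whose own proof is just two sentences: it invokes ``the standard spectral theory for self-adjoint operators'' (resting on the compact resolvent of Theorem \ref{th-n-07}) to get the discrete spectrum in $(-\infty,\Lambda]$ and the orthonormal eigenbasis, and then asserts that ``the standard elliptic theory'' upgrades $a_n\in D_{\zeta}(S)$ to $a_n\in D_{0,\zeta}(S)$. Your expansion — reducing to the compact self-adjoint spectral theorem for $R_{\lambda_0}$, translating eigenvalues via $\lambda_n=\lambda_0-\mu_n^{-1}$, and identifying the elliptic bootstrap for the Stokes system with the Neumann problem for $p$ as the place where the real work lies — is a faithful and more explicit version of exactly what the paper does.
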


\begin{proof}
The standard spectral theory for self-adjoint operators yields that
the spectrum of $(S,D_{\zeta}(S))$ belongs to $(-\infty, \Lambda]$
and is discrete, and there exists an orthonormal basis $\{a_{n}\,:\,
n\geq 0\}$, where each $a_{n}\in D_{\zeta}(S)$ and
$Sa_{n}=\lambda_{n}a_{n}$ for the corresponding eigenvalues
$\lambda_{n}$. The standard elliptic theory then implies that
$a_{n}\in D_{0,\zeta}(S)$.
\end{proof}

\subsection{Several facts about the
Navier's $\protect\zeta$-condition \eqref{nav-f}}

In what follows, we will assume that
$\{a_{n}\,:\, n\geq 0\}$ is the
orthonormal basis of $K_{2}(\Omega)$ constructed in Theorem
\ref{th-n-21}.

Let $N$ is an integer, which may be infinity.  Let $X_{N}$ be the
Hilbert space spanned by $\{a_{k}\,:\,k\leq N\}$, and let $P_{N}:
L^{2}(\Omega)\rightarrow X_{N}$ be the projection. That is, for
every $u\in L^{2}(\Omega)$,
\begin{equation}
P_{N}u=\sum_{k=0}^{N}a_{k}\int_{\Omega }\langle a_{k},u\rangle
\text{. }
\label{20-01}
\end{equation}
Of course, $P_{\infty}u=\sum_{k=0}^{\infty}a_{k}\int_{\Omega}\langle
a_{k},u\rangle$ is the projection from $L^{2}(\Omega)$ onto
$K_2(\Omega)$. If $N<\infty$, $P_{N}(u)\in D_{0,\zeta}(S)$ for any
$u\in L^{2}(\Omega)$.

\begin{proposition}\label{le-s-0}
Let $u\in \cup_{N\in \mathbb{N}}X_N$ the vector space spanned by
elements in $X_N$ where $N$ runs over all natural numbers, $\omega
=\nabla \times u$, and $\psi =\nabla\times\omega =-\Delta u$. Then
\begin{equation}
\left. \langle \psi, \nu \rangle \right|_{\Gamma}
=-\frac{1}{\zeta}\nabla^{\Gamma}\cdot u+2\nabla^{\Gamma}\cdot\pi(u),
\label{ss-0-2}
\end{equation}
and
\begin{equation}
\left. \left(\nabla\times\psi\right)^{\Vert}\right|_{\Gamma}
=\frac{1}{\zeta}\big(\ast S(u)\big)-2\big(\ast\pi(S(u))\big)\text{.}
\label{ss-0-7}
\end{equation}
\end{proposition}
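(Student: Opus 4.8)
The plan is to use the fact that the Stokes operator $S$ acts diagonally on the orthonormal basis $\{a_n\}$ of Theorem \ref{th-n-21}, hence maps each finite-dimensional space $X_N$ into itself. Concretely, if $u=\sum_{k=0}^{N}c_k a_k\in X_N$, then $S(u)=\sum_{k=0}^{N}c_k\lambda_k a_k\in X_N$; and since every $a_k$ lies in $D_{0,\zeta}(S)$ (Theorem \ref{th-n-21}) and $D_{0,\zeta}(S)$ is a linear space — the kinematic condition \eqref{kinematic-1}, the constraint $\nabla\cdot u=0$, and the Navier $\zeta$-condition \eqref{nav-f} are all linear — we get $X_N\subset D_{0,\zeta}(S)$. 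Thus \emph{both} $u$ and $S(u)$ are smooth, divergence-free, tangent to $\Gamma$, and satisfy \eqref{nav-f}. This observation, together with $\nabla\cdot u=0$, the vanishing of the curl of a gradient, and Lemma \ref{le-0-2}, is essentially the whole proof; it is also why the hypothesis is $u\in\cup_N X_N$ rather than $u\in D_{0,\zeta}(S)$, since for a general smooth divergence-free field obeying \eqref{nav-f} the image $S(u)$ need not obey \eqref{nav-f}.

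To prove \eqref{ss-0-2}: write the Helmholtz decomposition $\Delta u=S(u)+\nabla p$; by Lemma \ref{le-0-2}, $p$ solves the associated Neumann problem with $\partial_\nu p|_{\Gamma}=\frac{1}{\zeta}\nabla^{\Gamma}\cdot u-2\nabla^{\Gamma}\cdot\pi(u)$. Since $S(u)$ is tangent to $\Gamma$, taking normal components gives $\langle\Delta u,\nu\rangle|_{\Gamma}=\partial_\nu p|_{\Gamma}$, and because $\psi=-\Delta u$ we obtain $\langle\psi,\nu\rangle|_{\Gamma}=-\frac{1}{\zeta}\nabla^{\Gamma}\cdot u+2\nabla^{\Gamma}\cdot\pi(u)$, which is \eqref{ss-0-2}. (Equivalently, one reruns the chain of identities in the proof of Lemma \ref{le-0-2}: $\langle\psi,\nu\rangle=\langle\nabla\times(\nabla\times u),\nu\rangle=\nabla^{\Gamma}\times\omega^{\Vert}$, then insert \eqref{nav-f} for $\omega^{\Vert}$ and use $\nabla^{\Gamma}\times(\ast V)=\nabla^{\Gamma}\cdot V$ for tangent fields $V$ on $\Gamma$.)

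To prove \eqref{ss-0-7}: from $\psi=-\Delta u=-S(u)-\nabla p$ and $\nabla\times\nabla p=0$ we get $\nabla\times\psi=-\nabla\times S(u)$. Restricting the tangential part to $\Gamma$ and applying the Navier $\zeta$-condition \eqref{nav-f} to the field $S(u)\in D_{0,\zeta}(S)$ gives $(\nabla\times S(u))^{\Vert}|_{\Gamma}=-\frac{1}{\zeta}(\ast S(u))+2(\ast\pi(S(u)))$, whence $(\nabla\times\psi)^{\Vert}|_{\Gamma}=\frac{1}{\zeta}(\ast S(u))-2(\ast\pi(S(u)))$, which is \eqref{ss-0-7}. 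There is no genuine analytic obstacle once the spectral theory of Section 4 is in hand; the only step demanding attention is the bookkeeping behind $S(X_N)\subset X_N\subset D_{0,\zeta}(S)$, which is precisely what lets $S(u)$ inherit the Navier boundary condition.
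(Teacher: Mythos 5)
Your proof is correct and follows essentially the same route as the paper: equation \eqref{ss-0-2} via Lemma \ref{le-0-2} and the tangency of $S(u)$, and equation \eqref{ss-0-7} by applying the Navier $\zeta$-condition to $S(u)$ after observing $\nabla\times\psi=-\nabla\times S(u)$. The paper reaches the latter by expanding $S(u)=\sum_k\lambda_k a_k\int_\Omega\langle a_k,u\rangle$ and applying \eqref{nav-f} termwise to each eigenfunction before resumming, which is exactly your observation that $S(X_N)\subset X_N\subset D_{0,\zeta}(S)$ combined with the linearity of the boundary condition, so the two arguments coincide.
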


\begin{proof}
Let $u\in X_N$ for some $N$.
By definition, $S(u)=\Delta u-\nabla p$ so that $\psi =-S(u)-\nabla
p$. Since $\nabla\cdot S(u)=0$ and $\left.
S(u)^{\bot}\right|_{\Gamma}=0$, then $p$ is the solution of the
Neumann problem:
\begin{equation}
\Delta p=0\text{,}\qquad
\partial_\nu p\big|_{\Gamma}=\frac{1}{\zeta}\nabla^{\Gamma}\cdot u
-2\nabla^{\Gamma}\cdot\pi(u),
\label{ss-0-3}
\end{equation}
and $\left. \langle \psi, \nu\rangle\right|_{\Gamma}=-\left.
\partial_\nu p\right|_{\Gamma}$,  which yields (\ref{ss-0-2}). To
see the tangent component of $\nabla \times \psi $, we note that
$\nabla \times \psi =-\nabla \times S(u)$. Since
$u=\sum_{k=0}^{N}a_{k}\int_{\Omega}\langle a_{k}, u\rangle$,
then
\begin{equation*}
S(u)=\sum_{k=0}^{N}S(a_{k})\int_{\Omega}\langle a_{k},u\rangle
=-\sum_{k=0}^{N}\lambda_{k}a_{k}\int_{\Omega }\langle
a_{k},u\rangle\, .
\end{equation*}
Therefore,
\begin{equation*}
\nabla\times \psi =-\nabla\times
S(u)=\sum_{k=0}^{N}\lambda_{k}\left(\nabla\times a_{k}\right)
\int_{\Omega}\langle a_{k},u\rangle,
\end{equation*}
and it follows that
\begin{eqnarray*}
\left. \left(\nabla\times\psi \right)^{\Vert}\right|_{\Gamma}
&=&-\sum_{k=0}^{N}\lambda_{k}\left(\nabla \times
a_{k}\right)^{\Vert}\int_{\Omega}\langle a_{k},u\rangle \\
&=&\sum_{k=0}^{N}\lambda_{k}\Big(\frac{1}{\zeta}(\ast
a_{k})-2\big(\ast\pi(a_{k})\big)\Big)\int_{\Omega }\langle a_{k},u\rangle \\
&=&\frac{1}{\zeta}\Big(\ast\big(\sum_{k=0}^{N
}\lambda_{k}a_{k}\int_{\Omega }\langle a_{k},u\rangle\big)\Big)
-2\big(\ast \pi (\sum_{k=0}^{N }\lambda
_{k}a_{k}\int_{\Omega }\langle a_{k},u\rangle)\big) \\
&=&\frac{1}{\zeta }\big(\ast S(u)\big)-2\big(\ast \pi (S(u))\big),
\end{eqnarray*}
which yields the claim.
\end{proof}

\begin{lemma}\label{le-23-4}
For every $\varepsilon>0$, there exists  $M(\varepsilon, \zeta)>0$
such that
\begin{equation}
\frac{1}{2}\int_{\Gamma}\partial_\nu\big(|\psi|^{2}\big)\leq
\varepsilon \|\nabla^3u\|_{2}^{2}+ M\|(\psi, u)\|_{2}^{2}
\label{23-5-03}
\end{equation}
for any $u\in \cup_{N\in \mathbb{N}}X_N$, $\omega =\nabla\times u$,
and $\psi =\nabla\times\omega =-\Delta u$.
\end{lemma}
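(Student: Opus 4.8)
The plan is to start from the pointwise identity \eqref{june-063} applied to $\psi$ (legitimate since $u\in X_N$ is smooth): on $\Gamma$ one has $\tfrac12\partial_\nu(|\psi|^{2})=\langle \psi\times(\nabla\times\psi),\nu\rangle+\langle \psi\cdot\nabla\psi,\nu\rangle$, hence $\tfrac12\int_\Gamma\partial_\nu(|\psi|^{2})=I_1+I_2$ with $I_1=\int_\Gamma\langle \psi\times(\nabla\times\psi),\nu\rangle$ and $I_2=\int_\Gamma\langle \psi\cdot\nabla\psi,\nu\rangle$. Both are handled by the same mechanism: a boundary integral can only be tolerated with an $\varepsilon$ in front of $\|\nabla^{3}u\|_2^2$, and the trace--interpolation inequalities \eqref{sobolev-embedding}--\eqref{sobolev-trace-embedding} convert $\|\nabla^{j}v\|_{L^{2}(\Gamma)}^{2}$ into $\delta\|\nabla^{j+1}v\|_2^2+C_\delta\|\nabla^{j}v\|_2^2$; so the whole point is that, after using the reformulated Navier condition (Proposition \ref{le-s-0}), every boundary quantity can be written so that it carries at most two derivatives of $u$ (equivalently, at most one derivative of $S(u)$), never three or four. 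Throughout I will use the auxiliary estimate $\|u\|_{H^{2}}\le M\|(\psi,u)\|_2$ for $u\in X_N$, which follows from the elliptic estimate \eqref{24-00-1} (note $\Delta u=-\psi$ and, since $\nabla\cdot u=0$, $\nabla\times\nabla\times u=\psi$) once $\|\nabla\times u\|_2$ is absorbed by integrating $\|\nabla\times u\|_2^2$ by parts, invoking \eqref{4-30-03}, and using \eqref{sobolev-trace-embedding}.

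For $I_1$, note that the $\nu$-component of a cross product depends only on the tangential parts of the factors, so $\langle \psi\times(\nabla\times\psi),\nu\rangle=\langle \psi^{\Vert}\times(\nabla\times\psi)^{\Vert},\nu\rangle$ on $\Gamma$. Substituting \eqref{ss-0-7}, $(\nabla\times\psi)^{\Vert}|_{\Gamma}=\ast\big(\tfrac1\zeta S(u)-2\pi(S(u))\big)$, and using the defining relation \eqref{id-30-01} of the Hodge operator turns this into $\langle \tfrac1\zeta S(u)-2\pi(S(u)),\psi^{\Vert}\rangle$ on $\Gamma$, whence $|I_1|\le M\|S(u)\|_{L^{2}(\Gamma)}\|\psi\|_{L^{2}(\Gamma)}$ ($\pi$ bounded). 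Now $\|\psi\|_{L^{2}(\Gamma)}^{2}\le\delta\|\nabla^{3}u\|_2^2+C_\delta\|\psi\|_2^2$ (using $|\nabla\psi|\le C|\nabla^{3}u|$), while by Proposition \ref{prop-23-41}, $\|S(u)\|_2\le\|\psi\|_2$, and $\|\nabla\times\psi\|_2\le\|\nabla\psi\|_2\le C\|\nabla^{3}u\|_2$ one gets $\|S(u)\|_{L^{2}(\Gamma)}^{2}\le\delta\|S(u)\|_{H^{1}}^{2}+C_\delta\|S(u)\|_2^2\le\delta M\|\nabla^{3}u\|_2^2+M_\delta\|(\psi,u)\|_2^2$. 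A Cauchy--Schwarz and Young step with $\delta$ small then gives $|I_1|\le\varepsilon\|\nabla^{3}u\|_2^2+M\|(\psi,u)\|_2^2$.

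For $I_2$, apply \eqref{june21-045} to $\psi$: the term $\langle \psi,\nu\rangle\,\nabla\cdot\psi$ vanishes because $\nabla\cdot\psi=\nabla\cdot(\nabla\times\omega)=0$, and $\int_\Gamma\nabla^{\Gamma}\cdot\big(\langle \psi,\nu\rangle\psi^{\Vert}\big)=0$ by the divergence theorem on the closed surface $\Gamma$; the curvature terms $-\pi(\psi^{\Vert},\psi^{\Vert})$ and $-H|\psi^{\bot}|^{2}$ are $\le M|\psi|^{2}$ pointwise and hence contribute $\le M\|\psi\|_{L^{2}(\Gamma)}^{2}\le\varepsilon\|\nabla^{3}u\|_2^2+M\|\psi\|_2^2$. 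What remains, $2\int_\Gamma\langle \psi^{\Vert},\nabla^{\Gamma}\langle \psi,\nu\rangle\rangle$, is the main obstacle: a naive bound would require controlling a boundary trace of $\nabla\psi$, i.e.\ of fourth-order derivatives of $u$, which is unavailable uniformly. The resolution is \eqref{ss-0-2}: since $\langle \psi,\nu\rangle|_{\Gamma}=-\tfrac1\zeta\nabla^{\Gamma}\cdot u+2\nabla^{\Gamma}\cdot\pi(u)$ contains only one tangential derivative of $u|_\Gamma$, $\nabla^{\Gamma}\langle \psi,\nu\rangle$ contains at most $(\nabla^{\Gamma})^{2}u$; expressing $(\nabla^{\Gamma})^{2}u$ through the restriction of $\nabla^{2}u$ plus lower-order terms (Gauss--Weingarten relations, $\pi$ and its derivatives bounded) and applying \eqref{sobolev-embedding}--\eqref{sobolev-trace-embedding} at each order together with $\|u\|_{H^{2}}\le M\|(\psi,u)\|_2$ yields $\|\nabla^{\Gamma}\langle \psi,\nu\rangle\|_{L^{2}(\Gamma)}\le\delta^{1/2}\|\nabla^{3}u\|_2+M_\delta\|(\psi,u)\|_2$. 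Combining this with $\|\psi^{\Vert}\|_{L^{2}(\Gamma)}\le\|\psi\|_{L^{2}(\Gamma)}\le\delta^{1/2}\|\nabla^{3}u\|_2+C_\delta\|\psi\|_2$ and Young's inequality gives $\big|2\int_\Gamma\langle \psi^{\Vert},\nabla^{\Gamma}\langle \psi,\nu\rangle\rangle\big|\le\varepsilon\|\nabla^{3}u\|_2^2+M\|(\psi,u)\|_2^2$.

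Adding the bounds for $I_1$ and $I_2$ and relabelling $\varepsilon$ yields \eqref{23-5-03}. The crux is the step just described: every boundary integral that a priori involves a derivative of $\psi$ (three derivatives of $u$ on $\Gamma$) must first be rewritten, via Proposition \ref{le-s-0}, as the trace of a quantity carrying at most $\nabla^{2}u$ or at most $\nabla S(u)$; only then do the interpolation--trace inequalities convert it into $\varepsilon\|\nabla^{3}u\|_2^2+M\|(\psi,u)\|_2^2$, and verifying that no genuinely higher-order term survives this rewriting is the delicate part.
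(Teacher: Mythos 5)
Your proposal is correct and follows essentially the same route as the paper: the paper likewise starts from the combination of \eqref{june-063} and \eqref{june21-045} applied to $\psi$ (your $I_1+I_2$ written as one identity), kills the tangential divergence term by Stokes' theorem on $\Gamma$, substitutes the boundary traces \eqref{ss-0-2} and \eqref{ss-0-7} from Proposition \ref{le-s-0} so that every surviving boundary quantity carries at most $\nabla^2 u$ or $S(u)$, and then closes with the trace--interpolation inequalities together with \eqref{23-02-1}. The only difference is organizational (treating the two pieces separately and making the derivative-counting rationale explicit), not mathematical.
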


\begin{proof}
Recall (cf. (\ref{june21-045})) that
\begin{eqnarray*}
\frac{1}{2}\partial_\nu\big(|\psi|^{2}\big)&=&-\nabla^{\Gamma}\cdot\big(
\langle\psi, \nu\rangle \psi^{\Vert}\big) -\pi(\psi^{\Vert}, \psi^{\Vert})
 -H|\psi^{\bot}|^{2} \\
&&+2\langle \psi^{\Vert}, \nabla^{\Gamma}\langle \psi, \nu\rangle
\rangle +\langle\psi^{\Vert}\times
(\nabla\times\psi)^{\Vert},\nu\rangle \text{.}
\end{eqnarray*}
Integrating the equation over $\Omega$ and using the boundary data
in Lemma \ref{le-s-0} yield
\begin{eqnarray*}
\frac{1}{2}\int_{\Gamma}\partial_\nu\big(|\psi|^{2}\big)
&=&-\int_{\Gamma}\nabla^{\Gamma}\cdot\big(\langle\psi, \nu\rangle
\psi^{\Vert}\big)-\int_{\Gamma}\pi(\psi^{\Vert}, \psi^{\Vert})
-\int_{\Gamma}H|\psi^{\bot}|^{2} \\
&&+\frac{1}{\zeta}\int_{\Gamma}\langle \psi^{\Vert}, S(u)\rangle
-2\int_{\Gamma}\pi(\psi^{\Vert}, S(u)) \\
&&-\frac{2}{\zeta}\int_{\Gamma}\langle \psi^{\Vert},
\nabla^{\Gamma}\left(\nabla^{\Gamma}\cdot u\right)\rangle
+4\int_{\Gamma}\langle \psi^{\Vert},
\nabla^{\Gamma}\left(\nabla^{\Gamma}\cdot \pi(u)\right)
\rangle\text{.}
\end{eqnarray*}
The first integral vanishes by Stokes' theorem applying to the
surface $\Gamma$. Using the H\"{o}lder inequality, it follows that
\begin{eqnarray}
\frac{1}{2}\int_{\Gamma}\partial_\nu\big(|\psi|^{2}\big)\leq
M\|\psi^{\Vert}\|_{L^{2}(\Gamma)}^{2}\big(1+
\|S(u)\|_{L^{2}(\Gamma)}+\|(\nabla^2 u, \nabla u,
u)\|_{L^{2}(\Gamma)}\big). \label{4.26-b}
\end{eqnarray}
All the boundary integrals on the right-hand side can be estimated
via the Sobolev imbedding:
\begin{eqnarray*}
\|(\nabla^2 u, \nabla u, u)\|_{L^{2}(\Gamma)}\leq \varepsilon
\|\nabla^3 u\|_{2}+M\|(\psi,u)\|_{2}
\end{eqnarray*}
and
\begin{eqnarray*}
\|S(u)\|_{L^{2}(\Gamma)} &\leq &\varepsilon \|\nabla
S(u)\|_{2}+M\|S(u)\|_{2}
\leq \varepsilon \|\nabla\times\psi\|_{2}+ M\|(\psi, u)\|_{2},
\end{eqnarray*}
where the second inequality follows from (\ref{23-02-1}). Again, by
the Sobolev imbedding,
\begin{equation*}
\|\psi^{\Vert}\|_{L^{2}(\Gamma)}^{2}\leq \varepsilon
\|\nabla\psi\|_{2}^{2}+M\|\psi\|_{2}^{2}\text{.}
\end{equation*}
Plugging these estimates into \eqref{4.26-b},
we conclude
\begin{eqnarray*}
\frac{1}{2}\int_{\Gamma}\partial_\nu\big(|\psi|^{2}\big)\leq \big(
\varepsilon \|\nabla\psi\|_{2}+C_{2}\|\psi\|_{2}\big)\, \big(
\varepsilon \|\nabla^3 u\|_{2}+C\|(\psi, u)\|_{2}+1\big),
\end{eqnarray*}
and (\ref{23-5-03}) follows.
\end{proof}

\begin{theorem}\label{le-23-6}
Let $u\in \cup_{N\in \mathbb{N}}X_N$, $\omega =\nabla\times u$, and
$\psi=-\Delta u$. Then
\begin{equation*}
\|\nabla^3 u\|_{2}\leq M\big(\|\nabla\psi\|_2+\|u\|_{H^{2}}\big)
\end{equation*}
so that
\begin{equation}
\|u\|_{H^{3}}\leq M\|(\nabla\psi, \psi, u)\|_{2}, \label{23-6-01}
\end{equation}
where $M>0$ is a constant depending only on $\zeta$ and the domain
$\Omega$, which may be different in each occurrence.
\end{theorem}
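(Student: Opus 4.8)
The plan is to reduce everything to the first displayed inequality, $\|\nabla^{3}u\|_{2}\le M(\|\nabla\psi\|_{2}+\|u\|_{H^{2}})$, and to prove that by a Bochner (integration–by–parts) identity applied to the first derivatives of $u$. The passage from the first inequality to the second is routine: $\|u\|_{H^{3}}^{2}=\|u\|_{H^{2}}^{2}+\|\nabla^{3}u\|_{2}^{2}$, and for a divergence-free $u$ obeying \eqref{kinematic-1}--\eqref{nav-f} the bound \eqref{24-00-1} (with $\Delta u=-\psi$) together with the interpolation $\|\nabla\times u\|_{2}\le\|\nabla u\|_{2}\le\varepsilon\|\nabla^{2}u\|_{2}+C_{\varepsilon}\|u\|_{2}$ gives, after absorbing, $\|u\|_{H^{2}}\le C\|(\psi,u)\|_{2}$; hence $\|u\|_{H^{3}}\le M\|(\nabla\psi,\psi,u)\|_{2}$ once the first inequality is known.

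For the first inequality I would apply the Bochner identity of Lemma~\ref{le-66-02} to each scalar $g=\partial_{l}u^{k}$ in an orthonormal frame ($l,k=1,2,3$) and sum. Since $\Delta(\partial_{l}u^{k})=\partial_{l}(\Delta u^{k})=-\partial_{l}\psi^{k}$, the interior terms add up to exactly $\|\nabla\psi\|_{2}^{2}$, so that
\[
\|\nabla^{3}u\|_{2}^{2}=\|\nabla\psi\|_{2}^{2}-\sum_{l,k}\int_{\Gamma}\pi\big((\nabla\partial_{l}u^{k})^{\Vert},(\nabla\partial_{l}u^{k})^{\Vert}\big)-\sum_{l,k}\int_{\Gamma}H\,|\partial_{\nu}\partial_{l}u^{k}|^{2}+2J,
\]
where $J:=\sum_{l,k}\int_{\Gamma}\langle\nabla^{\Gamma}(\partial_{l}u^{k}),\nabla^{\Gamma}(\partial_{\nu}\partial_{l}u^{k})\rangle$. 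The two curvature boundary integrals are $O\big(\int_{\Gamma}|\nabla^{2}u|^{2}\big)$, and the trace–interpolation inequality $\int_{\Gamma}|\nabla^{2}u|^{2}\le\varepsilon\|\nabla^{3}u\|_{2}^{2}+C_{\varepsilon}\|\nabla^{2}u\|_{2}^{2}$ (cf. \eqref{sobolev-embedding}, valid since $\Gamma$ has bounded geometry) together with $\|\nabla^{2}u\|_{2}\le C(\|\psi\|_{2}+\|u\|_{H^{1}})$ from Theorem~\ref{th-66-1} bounds them by $\varepsilon\|\nabla^{3}u\|_{2}^{2}+M\|u\|_{H^{2}}^{2}$, to be absorbed at the end.

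The heart of the matter — and the step I expect to be the real obstacle — is the boundary integral $J$, which carries a third-order derivative of $u$ (one of them normal) on $\Gamma$; this is precisely where the Navier boundary condition must enter, in the same spirit as the term $I$ in the proof of Theorem~\ref{th-66-1} and the estimate of $\tfrac12\int_{\Gamma}\partial_{\nu}(|\psi|^{2})$ in Lemma~\ref{le-23-4}. Working in a frame compatible with $\Gamma$, I would split the index $l$ into its tangential and normal parts and repeatedly use: (i) the kinematic condition $u^{3}|_{\Gamma}=0$ together with $\nabla\cdot u=0$, which turns $\nabla_{l}u^{3}|_{\Gamma}$ and $\partial_{\nu}u^{3}|_{\Gamma}$ into first- or second-order tangential expressions; (ii) the Navier condition in the form $\partial_{\nu}u^{k}|_{\Gamma}=-\tfrac1\zeta u^{k}+\sum_{i}\pi_{ik}u^{i}-\sum_{j}u^{j}\Gamma_{3j}^{k}$ for $k=1,2$ (that is, \eqref{es-5-03}), so that $\nabla_{l}\partial_{\nu}u^{k}|_{\Gamma}$ is again a lower-order boundary expression; and (iii) the boundary-normal-coordinate identity $\partial_{\nu}^{2}u^{k}|_{\Gamma}=\Delta u^{k}-H\partial_{\nu}u^{k}-\Delta^{\Gamma}u^{k}+(\mathrm{curv})\cdot\nabla u=-\psi^{k}+(\text{lower order})$ to eliminate the one genuinely third-order piece. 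After these substitutions $J$ decomposes into: non-positive terms $-\tfrac1\zeta\int_{\Gamma}|\nabla^{\Gamma}(\text{first derivative of }u)|^{2}$ (discarded); products of two second-order boundary traces, which are $O\big(\int_{\Gamma}|\nabla^{2}u|^{2}\big)$ and handled as above; and terms pairing $\nabla^{\Gamma}\psi^{k}$ against a second-order trace which, after one tangential integration by parts on the closed surface $\Gamma$ (moving $\nabla^{\Gamma}$ off $\psi$), become $\psi^{k}$ against a second-order trace, controlled by $\|\psi\|_{L^{2}(\Gamma)}\,\|(\nabla^{2}u,\nabla u,u)\|_{L^{2}(\Gamma)}\le(\varepsilon\|\nabla\psi\|_{2}+M\|\psi\|_{2})(\varepsilon\|\nabla^{3}u\|_{2}+M\|u\|_{H^{2}})$ via trace interpolation and $\|\nabla\psi\|_{2}\le\|\nabla^{3}u\|_{2}$. (A few $u^{3}$-contributions produce, through $\nabla\cdot u=0$, apparent third-order \emph{tangential} derivatives; a further tangential integration by parts turns these back into second-order traces.) Equivalently, one may rewrite $J$ in terms of the boundary data of Proposition~\ref{le-s-0} and invoke Lemma~\ref{le-23-4} almost verbatim.

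Collecting all contributions gives $\|\nabla^{3}u\|_{2}^{2}\le\|\nabla\psi\|_{2}^{2}+\varepsilon\|\nabla^{3}u\|_{2}^{2}+M(\varepsilon,\zeta)\big(\|u\|_{H^{2}}^{2}+\|\psi\|_{2}^{2}\big)$; choosing $\varepsilon$ small, absorbing the $\|\nabla^{3}u\|_{2}^{2}$ term, and using $\|\psi\|_{2}\le\|u\|_{H^{2}}$ yields $\|\nabla^{3}u\|_{2}\le M(\|\nabla\psi\|_{2}+\|u\|_{H^{2}})$, whence \eqref{23-6-01}. The one delicate point throughout is $J$: one must arrange the Navier condition to interact with three derivatives on $\Gamma$ (one of them normal) so that no uncontrolled third-order boundary trace — in particular no $\|\nabla^{3}u\|_{L^{2}(\Gamma)}$-type quantity, which cannot be absorbed in the absence of $H^{4}$ control — survives.
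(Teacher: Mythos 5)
Your proposal is correct and follows essentially the same route as the paper: the paper obtains the analogue of your starting identity by integrating by parts twice in Cartesian coordinates (its \eqref{23-01}, whose boundary term $\tfrac12\int_{\Gamma}\partial_\nu\big(|\nabla^2u|^{2}\big)$ is exactly the Bochner-type boundary data you produce via Lemma~\ref{le-66-02} applied to $g=\partial_l u^k$), and then, just as you describe, reduces the one delicate third-order boundary trace by combining the kinematic and divergence-free conditions, the Navier condition in the form \eqref{es-5-03} together with the boundary data of Proposition~\ref{le-s-0}, the relation $\nabla_3^2u^l=-\psi^l-\sum_{a}\nabla_a^2u^l$, and tangential integration by parts on $\Gamma$, before absorbing $\varepsilon\|\nabla^3u\|_2^2$. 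Your identification of $J$ as the critical term, and your mechanism for killing it, match the paper's treatment of $\tfrac12\int_{\Gamma}\partial_\nu\big(|\nabla^2u|^{2}\big)$ step for step, so the difference is only in the packaging of the initial integration by parts.
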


\begin{proof}
First, we need to apply integration by parts to reduce the
$L^{2}$-norm of the third total derivative $\nabla^3 u$ into the
$L^{2}$-norm of the total derivative $\nabla \psi$, plus some
boundary integrals, which can be dominated by the use of boundary
conditions on $u$ and the facts that $\nabla\cdot u=0$ and
$\nabla\cdot\psi =0$. Let us carry out this step in the ordinary
coordinate system, and thus $\partial_{j}=\frac{\partial}{\partial
x^{j}}$ are the coordinate differentiations. Then
\begin{equation*}
|\nabla^3u|^{2}=\big(\partial_{i}\partial_{j}\partial_{k}u^{l}\big)
\big(\partial_{i}\partial_{j}\partial_{k}u^{l}\big),
\end{equation*}
where, as usual, the repeated indices are summed up from $1$ to $3$.
Therefore, by using integration by parts twice,
\begin{eqnarray}
\|\nabla^3 u\|_{2}^{2}
&=&\int_{\Omega}\big(\partial_{k}\psi^{l}\big)\big(\partial_{k}\psi^{l}\big)
+\int_{\Gamma}\big(\partial_{k}\psi^{l}\big)
\big(\partial_{j}\partial_{k}u^{l}\big)\langle\partial_{j}, \nu\rangle
+\int_{\Gamma}\big(\partial_{j}\partial_{k}u^{l}\big)
\big(\partial_{i}\partial_{j}\partial_{k}u^{l}\big) \langle
\partial_{i},\nu \rangle  \notag \\
&=&\int_{\Omega}|\nabla\psi|^{2}
+\int_{\Gamma}\big(\partial_{k}\psi^{l}\big)
\big(\partial_{j}\partial_{k}u^{l}\big)\langle\partial_{j}, \nu\rangle
+\frac{1}{2}\int_{\Gamma}\partial_\nu\big(|\nabla^2u|^{2}\big),
\label{23-01}
\end{eqnarray}
where $\psi^{l}=-\Delta u^{l}$ has been used.
We first handle the last boundary integral
$\frac{1}{2}\int_{\Gamma}\partial_\nu\big(|\nabla^2 u|^{2})$. To
this end, we use a moving frame $(\nabla_{1},\nabla_2,\nabla_{3})$
so that $\nabla_{3}$ coincides with the unit normal $\nu$ and
$\nabla_{i}$, $i=1,2$, are (local) tangent vector fields, so that we
may perform integration by parts on the surface $\Gamma$ for
$\nabla_{i}$, $i=1,2$. Then
\begin{equation*}
J\equiv \frac{1}{2}\partial_\nu\big(|\nabla^2 u|^{2}\big)\,\sim\,
\big(\nabla_{j}\nabla_{k}u^{l}\big)
\big(\nabla_{j}\nabla_{k}\nabla_{3}u^{l}\big),
\end{equation*}
where $\sim$ means that the difference between the two sides
contains only the quadratic terms involving $u$ and its 1st and 2nd
order derivatives, the second fundamental form $\pi$ and its
derivatives. These terms come from the exchange of orders by
applying the derivatives $\nabla_{i}$. According to the Ricci
identity, the difference after the exchange of
$\nabla_{i}\nabla_{j}$ to $\nabla_{j}\nabla_{i}$ is a term which has
order $0$. More precisely,
\begin{equation*}
\nabla_{i}\nabla_{j}T^{\mathbf{l}}-\nabla_{j}\nabla_{i}T^{\mathbf{l}}=\sum_{l=1}^3
C_{ij}^{l}T^{\mathbf{l}},
\end{equation*}
where $T$ is a vector field and $\mathbf{l}$ can be a multi-index,
depending on the order of the vector field $T$. The boundary
integrals of these lower-order terms can be controlled by
\begin{equation*}
C\|(\nabla^2 u, \nabla u, u)\|_{L^{2}(\Gamma)}^{2},
\end{equation*}
which, in turn, can be dominated by
\begin{equation*}
\varepsilon \|\nabla^3 u\|_{2}^{2}+M\|(\psi, u)\|_{2}^{2}\text{.}
\end{equation*}
Note that, in this step, no boundary condition on $u$ is required.
With this principle, without using any boundary condition on $u$, we
may re-group
$\left(\nabla_{j}\nabla_{k}u^{l}\right)
\left(\nabla_{j}\nabla_{k}\nabla_{3}u^{l}\right)$ into the following
items: $\left(\nabla_{j}\nabla_{k}u^{3}\right)
\left(\nabla_{j}\nabla_{k}\nabla_{3}u^{3}\right)$,
$\left(\nabla_{k}\nabla_{3}u^{3}\right)
\left(\nabla_{k}\nabla_{3}^2u^{3}\right)$,
$\left(\nabla_{3}^2u^{3}\right) \left(\nabla_{3}^3u^{3}\right)$,
$\left(\nabla_{j}\nabla_{k}u^{l}\right)
\left(\nabla_{j}\nabla_{k}\nabla_{3}u^{l}\right)$,
$\left(\nabla_{j}\nabla_{3}u^{l}\right)
\left(\nabla_{j}\nabla_{3}^2u^{l}\right)$, and
$\left(\nabla_{3}^2u^{l}\right) \left(\nabla_{3}^3u^{l}\right)$,
where all the other repeated indices run through from $1$ to $2$.
Indeed,
\begin{eqnarray*}
J &\sim &\sum_{j,k=1,2}\big(\nabla_{j}\nabla_{k}u^{3}\big)
\big(\nabla_{j}\nabla_{k}\nabla_{3}u^{3}\big)
+\sum_{l,j,k=1,2}\big(\nabla_{j}\nabla_{k}u^{l}\big) \big(
\nabla_{j}\nabla_{k}\nabla_{3}u^{l}\big) \\
&&+2\sum_{l,j=1,2}\big(\nabla_{j}\nabla_{3}u^{l}\big)
\big(\nabla_{j}\nabla_{3}^2u^{l}\big)
+2\sum_{k=1,2}\big(\nabla_{k}\nabla_{3}u^{3}\big) \big(
\nabla_{k}\nabla_{3}^2u^{3}\big) \\
&&+\big(\nabla_{3}^2u^{3}\big) \big(
\nabla_{3}\nabla_{3}^2u^{3}\big)
+\sum_{l=1,2}\big(\nabla_{3}^2u^{l}\big) \big(
\nabla_{3}^3u^{l}\big).
\end{eqnarray*}
To estimate the boundary integrals of the right-hand side, we use
the kinematic condition \eqref{kinematic-1}, the divergence-free
condition:
\begin{equation}
\nabla_{3}u^{3}=-\sum_{a=1,2}\nabla_{a}u^{a}\text{,}\qquad
\nabla_{3}\psi^{3}=-\sum_{a=1,2}\nabla_{a}\psi^{a}, \label{24-1}
\end{equation}
and the definition of $\psi $:
\begin{eqnarray}
\nabla_{3}^2 u^{b}=\Delta u^{b}-\sum_{a=1,2}\nabla_{a}^2u^{b}
=-\psi^{b}-\sum_{a=1,2}\nabla_{a}^2u^{b}. \label{24-2}
\end{eqnarray}
Both (\ref{24-1}) and (\ref{24-2}) hold on $\Omega$. By using these
relations, we can rewrite $J$ as follows:
\begin{eqnarray*}
J &\sim &-2(\nabla_{j}\nabla_{a}^2u^{l}
+\nabla_{j}\psi^{l})\nabla_{j}\big(\nabla_{3}u^{l}-\nabla_{l}u^{3}\big)
-(\nabla_{a}^2\nabla_{b}u^{b}+\nabla_{a}\psi^{a})[\psi^{3}]
\\
&&+\big(2\nabla_{l}\nabla_{a}u^{a}+\nabla_{a}^2u^{l} +\psi^{l}\big)
\big(\nabla_{l}\psi^{3}\big)
+\big(\nabla_{b}^2u^{l}+\psi^{l}\big)\big(\nabla_{3}\psi^{l}
-\nabla_{l}\psi^{3}\big)\\
&&+\psi^{l}\nabla_{a}^2\big(\nabla_{3}u^{l}-\nabla_{l}u^{3}\big)
+\sum_{l,j,k=1,2}\big(\nabla_{j}\nabla_{k}u^{l}\big) \big(
\nabla_{j}\nabla_{k}(\nabla_{3}u^{l}-\nabla _{l}u^{3}) \big)
\\
&&+\sum_{l,b,a=1,2}\big(\nabla_{b}^2u^{l}\big)
\nabla_{a}^2\big(\nabla_{3}u^{l}-\nabla_{l}u^{3}\big) \text{,}
\end{eqnarray*}
where the other repeated indices are added up through $1$ to $2$.
The terms in the square brackets are to be replaced via the
corresponding boundary conditions, so that the orders of taking
derivatives for these terms are reduced by $1$. Therefore, all the
terms, except the first two, are quadratic forms of $u$ and its 1st
and 2nd order derivatives, and those of $\pi$, so that these terms
are dominated by
\begin{equation*}
C|(\nabla^2 u, \nabla u, u)|^{2}.
\end{equation*}
For the first two terms, we perform integration by parts on
$\Gamma$. The second boundary integral on the right-hand side of
\eqref{23-01} can be estimated similarly.

Let
$I=\frac{1}{2}\int_{\Gamma}\partial_\nu\big(|\nabla^2u|^{2}\big)$.
Integrating the above equation and using integration by parts on the
surface $\Gamma$ yield
\begin{eqnarray*}
I &\leq
&\int_{\Gamma}\big(2\psi^{k}+3\nabla_{k}\nabla_{a}u^{a}+\nabla_{a}^2u^{k}\big)
  \nabla_{k}\psi^{3}+\int_{\Gamma}\big(\psi^{l}+\nabla_{b}^2u^{l}\big)
   \big(\nabla_{3}\psi^{l}-\nabla_{l}\psi^{3}\big) \\
&&+3\int_{\Gamma}\big(\psi^{l}+\nabla_{a}^2u^{l}\big)
  \nabla_{j}^2\big(\nabla_{3}u^{l}-\nabla_{l}u^{3}\big)
+\int_{\Gamma}\big(\nabla_{j}\nabla_{k}u^{l}\big)
  \big(\nabla_{j}\nabla_{k}(\nabla_{3}u^{l}-\nabla_{l}u^{3})\big) \\
&&+\varepsilon \|\nabla^3 u\|_{2}^{2}+M\|(\psi,u)\|_{2}^{2} \text{.}
\end{eqnarray*}
Therefore, using the boundary conditions for $\psi^{3}=\langle\psi,
\nu\rangle$, $\nabla_{3}\psi^{l}-\nabla_{l}\psi^{3}$ (which is
$(\nabla\times\psi)^{\Vert}$), $\nabla_{3}u^{l}-\nabla_{l}u^{3}$
(which is $\left(\nabla\times u\right)^{\Vert}$), together with the
Sobolev imbedding, we have
\begin{equation*}
I\leq \varepsilon\|\nabla^3 u\|_{2}^{2}+M(\varepsilon,\zeta)\|(\psi,
u)\|_{2}^{2}\text{.}
\end{equation*}
\end{proof}

\begin{corollary}
\label{co-23-31} There exists $M(\zeta)>0$ depending only on $\zeta$
and $\Omega$ such that
\begin{eqnarray}
M\|(\nabla\psi, \psi, u)\|_{2}\leq \|u\|_{H^{3}} \leq
M^{-1}\|(\nabla \psi, \psi, u)\|_{2} \label{23-5-6}
\end{eqnarray}
for any $u\in \cup_{N\in \mathbb{N}}X_N$, where $\psi =-\Delta u$.
\end{corollary}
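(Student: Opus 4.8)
The plan is to obtain both inequalities in \eqref{23-5-6} essentially for free, since the substantive estimate has already been carried out in Theorem \ref{le-23-6}.

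For the right-hand inequality $\|u\|_{H^{3}}\leq M^{-1}\|(\nabla\psi,\psi,u)\|_{2}$, this is precisely estimate \eqref{23-6-01} of Theorem \ref{le-23-6}: after relabelling the constant there as $M^{-1}$, there is nothing further to do. Note that any $u\in\cup_{N\in\mathbb{N}}X_{N}$ lies in $D_{0,\zeta}(S)$ and is therefore smooth up to $\Gamma$, so that $\psi=-\Delta u$, $\nabla\psi$, and the third derivatives $\nabla^{3}u$ are all well defined and the hypotheses of Theorem \ref{le-23-6} are met.

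For the left-hand inequality I would simply dominate each of the three pieces by $\|u\|_{H^{3}}$. Working in ordinary coordinates, $\psi^{l}=-\Delta u^{l}=-\sum_{j=1}^{3}\partial_{j}^{2}u^{l}$, so pointwise $|\psi|\leq C|\nabla^{2}u|$ and $|\nabla\psi|\leq C|\nabla^{3}u|$ for a purely dimensional constant $C$; integrating gives $\|\psi\|_{2}\leq C\|\nabla^{2}u\|_{2}\leq C\|u\|_{H^{3}}$ and $\|\nabla\psi\|_{2}\leq C\|\nabla^{3}u\|_{2}\leq C\|u\|_{H^{3}}$, while $\|u\|_{2}\leq\|u\|_{H^{3}}$ is immediate. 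Adding these, $\|(\nabla\psi,\psi,u)\|_{2}\leq C'\|u\|_{H^{3}}$, which is the asserted bound with $M=1/C'$.

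Since both constants depend only on $\zeta$ and $\Omega$ (the $\zeta$-dependence entering solely through Theorem \ref{le-23-6}), one may, after shrinking, use a single constant $M$ in \eqref{23-5-6}. I do not expect any genuine obstacle: the only nontrivial ingredient is Theorem \ref{le-23-6}, and the reverse inequality is the elementary observation that $\psi$ is a constant-coefficient second-order differential operator applied to $u$, so that $\|(\nabla\psi,\psi,u)\|_{2}$ is automatically controlled by the full $H^{3}$-norm of $u$.
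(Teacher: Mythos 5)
Your proposal is correct and matches the paper's (implicit) argument: the paper states Corollary \ref{co-23-31} without further proof precisely because the nontrivial direction is estimate \eqref{23-6-01} of Theorem \ref{le-23-6}, while the reverse inequality is the elementary pointwise bound $|\psi|\leq C|\nabla^2 u|$, $|\nabla\psi|\leq C|\nabla^3 u|$ that you give. Nothing is missing.
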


\subsection{The Stokes semigroup}

The self-adjoint operator $S$ on $K_{2}(\Omega)$ has a spectral
decomposition
\begin{equation*}
S=\int_{-\infty}^\Lambda\lambda\,\text{d}E_{\lambda},
\end{equation*}
where $\{E_{\lambda}\,:\,\lambda<\Lambda\}$ is the left-continuous
family of the projection operator $E_{\lambda}$ on the space spanned
by $\{a_{k}\,:\, \lambda_{k}>\lambda\}$. According to the spectral
theory of self-adjoint operators, $u\in K_{2}(\Omega)$ belongs to
$D_{\zeta}(S)$ if and only if
\begin{equation*}
\int_{-\infty}^\Lambda\lambda^{2}\,\text{d}\langle E_{\lambda}u,
u\rangle
=\sum_{k=0}^{\infty}\lambda_{k}^{2}\Big(\int_{\Omega}\langle a_{k},
u\rangle \Big)^{2}<\infty,
\end{equation*}
and $D(\mathcal{E}+\Lambda I)=D\left(\sqrt{-S+\Lambda
I}\right)=K_{2}(\Omega)\cap H^{1}(\Omega)$; and $u\in K_{2}(\Omega)$
is in $D(\mathcal{S})$ if and only if
\begin{equation*}
\int_{-\infty}^\Lambda\lambda\, \text{d}\langle E_{\lambda}u,
u\rangle =\sum_{k=0}^{\infty}\lambda_{k}\Big(\int_{\Omega}\langle
a_{k},u\rangle \Big)^{2}<\infty.
\end{equation*}
In this case,
\begin{equation}
\mathcal{E}(u,u)=-\sum_{k=0}^{\infty}\lambda_{k}\Big(\int_{\Omega}\langle
a_{k},u\rangle \Big)^{2}\text{.} \label{bi-0-41}
\end{equation}

We are going to show the following estimate that plays an important
role in the proof of the existence of strong solutions to the
Navier-Stokes equations.

\begin{theorem}\label{th-n4}
For any $\varepsilon>0$,  there exists $M(\varepsilon, \zeta)$ such
that
\begin{equation}
\|\nabla \times P_{N}(u)\|_{2}^{2}\leq
(1+\varepsilon)\mathcal{E}(P_{\infty}(u),
P_{\infty}(u))+M\|u\|_{2}^{2} \label{20-02}
\end{equation}
for any $u\in L^{2}(\Omega)$ and any integer $N$.
\end{theorem}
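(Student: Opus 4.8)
The plan is to reduce to a single‑function estimate for $v:=P_N(u)$ and to carry everything out for the nonnegative form $\widetilde{\mathcal E}:=\mathcal E+\Lambda I\ge 0$, on which $P_N$ is a genuine contraction, so that the factor $1+\varepsilon$ costs nothing. Throughout I take $N$ finite; the case $N=\infty$ then follows by letting $N\to\infty$, since $P_N(u)\to P_\infty(u)$ in $H^1$ when $P_\infty(u)\in D(\mathcal E)$ (and the asserted bound is vacuous otherwise, as explained next).

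\emph{Reduction.} First observe $P_N(u)=P_N\big(P_\infty(u)\big)$, because each $a_k\in K_2(\Omega)$ and hence $\int_\Omega\langle a_k,u\rangle=\int_\Omega\langle a_k,P_\infty(u)\rangle$ by Helmholtz orthogonality; also $\|P_\infty(u)\|_2\le\|u\|_2$. So we may replace $u$ by $P_\infty(u)$ and assume $u\in K_2(\Omega)$. If $u\notin H^1(\Omega)$, then $u\notin D(\mathcal E)=K_2(\Omega)\cap H^1(\Omega)$ and $\mathcal E(u,u)=+\infty$ by the spectral characterization of $D(\mathcal E)$, so \eqref{20-02} holds trivially; hence assume $u\in D(\mathcal E)$. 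Writing $c_k=\int_\Omega\langle a_k,u\rangle$, the spectral decomposition gives $\widetilde{\mathcal E}(u,u)=\sum_{k\ge0}(\Lambda-\lambda_k)c_k^2$ with every summand $\ge0$ (since $\lambda_k\le\Lambda$), and $v=\sum_{k\le N}c_ka_k$ yields the truncated sum, so $\widetilde{\mathcal E}(v,v)\le\widetilde{\mathcal E}(u,u)$ and $\|v\|_2\le\|u\|_2$. Since $\mathcal E(u,u)=\widetilde{\mathcal E}(u,u)-\Lambda\|u\|_2^2$, it suffices to prove, for every $v\in D_{0,\zeta}(S)$,
$$\|\nabla\times v\|_2^2\le(1+\varepsilon)\,\widetilde{\mathcal E}(v,v)+M'\|v\|_2^2 ,$$
with $M'=M'(\varepsilon,\zeta)$; then $M=M'+(1+\varepsilon)\Lambda$ does the job.

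\emph{The single‑function estimate.} For $v\in D_{0,\zeta}(S)$ I would use the two identities available from the proof of Theorem \ref{th-n-01} (with \eqref{est-june26-01a}) and Corollary \ref{co-n-01}:
$$\mathcal E(v,v)=\|\nabla\times v\|_2^2+\tfrac1\zeta\|v\|_{L^2(\Gamma)}^2-2\!\int_\Gamma\pi(v,v)=\|\nabla v\|_2^2+\tfrac1\zeta\|v\|_{L^2(\Gamma)}^2-\!\int_\Gamma\pi(v,v).$$
From the first, discarding the nonnegative terms $\tfrac1\zeta\|v\|_{L^2(\Gamma)}^2$ and $\Lambda\|v\|_2^2$ and bounding $\pi\le\lambda_1^+:=\max(\sup_\Gamma\pi,0)$, one gets $\|\nabla\times v\|_2^2\le\widetilde{\mathcal E}(v,v)+2\lambda_1^+\|v\|_{L^2(\Gamma)}^2$. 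From the second, combined with the trace inequality \eqref{sobolev-trace-embedding} used with a small parameter and a standard absorption, one gets $\|\nabla v\|_2^2\le 2\,\widetilde{\mathcal E}(v,v)+C_1\|v\|_2^2$. Inserting this into $\|v\|_{L^2(\Gamma)}^2\le\delta\|\nabla v\|_2^2+\tfrac{C}{\delta}\|v\|_2^2$ and then into the previous display yields $\|\nabla\times v\|_2^2\le(1+4\lambda_1^+\delta)\,\widetilde{\mathcal E}(v,v)+M'(\delta,\zeta)\|v\|_2^2$; picking $\delta$ with $4\lambda_1^+\delta\le\varepsilon$ (no constraint if $\lambda_1^+=0$) finishes, the coefficient inflation being harmless because $\widetilde{\mathcal E}(v,v)\ge0$.

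\emph{Main obstacle.} The reason for passing to $\widetilde{\mathcal E}=\mathcal E+\Lambda I$ instead of working with $\mathcal E$ directly is that $\mathcal E(u,u)$ need not be nonnegative — it is not when the curvature of $\Gamma$ exceeds $1/\zeta$ somewhere, i.e. when $\Lambda>0$ — so the factor $1+\varepsilon$ multiplying $\mathcal E$ in \eqref{20-02} is not automatically ``an error in our favour''; on $\widetilde{\mathcal E}\ge0$, which $P_N$ contracts and with respect to which $\|\nabla v\|_2$, $\|\nabla\times v\|_2$ and the boundary trace are all comparable up to $\|v\|_2^2$, every absorption is legitimate. The only mildly delicate point is the self‑improving estimate $\|\nabla v\|_2^2\le 2\,\widetilde{\mathcal E}(v,v)+C_1\|v\|_2^2$, for which one must use the trace inequality with a small constant to absorb a fraction of $\|\nabla v\|_2^2$; the remainder is bookkeeping with the two form identities and with the spectral truncation $\widetilde{\mathcal E}(P_N u,P_N u)\le\widetilde{\mathcal E}(u,u)$.
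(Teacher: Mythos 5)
Your proof is correct and rests on the same ingredients as the paper's: the two form identities from the proof of Theorem \ref{th-n-01}, the trace inequality \eqref{sobolev-trace-embedding} with absorption, and the termwise monotonicity of the spectral truncation. The only real difference is organizational: by shifting to $\widetilde{\mathcal E}=\mathcal E+\Lambda I\ge 0$ at the outset you get $\widetilde{\mathcal E}(P_Nu,P_Nu)\le\widetilde{\mathcal E}(P_\infty u,P_\infty u)$ in one step, which neatly replaces the paper's two-case analysis ($N\ge N_0$ versus $N\le N_0-1$) of the finitely many positive eigenvalues, at the harmless cost of an extra $(1+\varepsilon)\Lambda\|u\|_2^2$.
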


\begin{proof}
Recall that
\begin{equation*}
P_{N}(u)=\sum_{k=0}^{N}a_{k}\int_{\Omega}\langle a_{k},u\rangle
=\sum_{k=0}^{N}a_{k}\int_{\Omega}\langle a_{k},P_{\infty}(u)\rangle
\end{equation*}
so that $P_{N}(u)\in D_{0,\zeta}(S)$. According to
(\ref{est-june26-01a}),
\begin{eqnarray*}
\|\nabla \times P_{N}(u)\|_{2}^{2} &=&\|\nabla
P_{N}(u)\|_{2}^{2}+\int_{\Gamma }\pi (P_{N}(u),P_{N}(u)) \\
&\leq &(1+\varepsilon)\|\nabla P_{N}(u)\|_{2}^{2}
+M(\varepsilon,\zeta) \|P_{N}(u)\|_{2}^{2} \\
&\leq & (1+\varepsilon)\mathcal{E}(P_{N}(u),P_{N}(u))+M\|u\|_{2}^{2} \\
&=&-(1+\varepsilon) \int_{\Omega} \langle S\left(
P_{N}(u)\right),P_{N}(u)\rangle +M\|u\|_{2}^{2},
\end{eqnarray*}
where the second inequality follows from (\ref{00-01}). However,
\begin{equation*}
S\left(P_{N}(u)\right)
=\sum_{k=0}^{N}\lambda_{k}a_{k}\int_{\Omega}\langle a_{k},u\rangle.
\end{equation*}
Denote integer $N_0>0$ such that $\Lambda\ge \lambda_0\ge\cdots\ge
\lambda_{N_0}> 0\ge \lambda_{N_0+1}\ge \cdots$. Then we find that,
when $N\ge N_0$,
\begin{eqnarray*}
-\int_{\Omega}\langle S\left(P_{N}(u)\right), P_{N}(u)\rangle
&=&-\sum_{k=0}^{N}\lambda_{k}\int_\Omega\langle a_{k},
P_{N}(u)\rangle
\int_{\Omega}\langle a_{k}, u\rangle  \\
&=&-\sum_{k=0}^{N}\lambda_{k}\big(\int_{\Omega}\langle
a_{k},P_{\infty}(u)\rangle \big)^{2} \\
&\leq &-\sum_{k=0}^{\infty}\lambda_{k}
\big(\int_{\Omega}\langle a_{k},P_{\infty}(u)\rangle \big)^{2} \\
&=&\mathcal{E}(P_{\infty}(u), P_{\infty}(u))\text{,}
\end{eqnarray*}
and, while $N\le N_0-1$,
\begin{eqnarray*}
-\int_{\Omega}\langle S\left(P_{N}(u)\right), P_{N}(u)\rangle
&=&-\sum_{k=0}^{N}\lambda_{k}\big(\int_{\Omega}\langle
a_{k},P_{\infty}(u)\rangle \big)^{2} \\
&\leq &-\sum_{k=0}^{\infty}\lambda_{k}
\big(\int_{\Omega}\langle a_{k},P_{\infty}(u)\rangle \big)^{2}
  +\sum_{k=N+1}^{N_0}\lambda_k\big(\int_{\Omega}\langle a_{k},P_{\infty}(u)\rangle \big)^{2}
 \\
&=&\mathcal{E}(P_{\infty}(u), P_{\infty}(u))+M\|u\|_2^2\text{.}
\end{eqnarray*}
This arrives the result expected.
\end{proof}

\begin{corollary}\label{coro-9-2}
For any $\varepsilon>0$, there exists $M(\varepsilon,\zeta)>0$
depending only on $\varepsilon$, $\zeta$, and $\Omega$ such that
\begin{equation}
\|(\nabla \times P_{N}(u), \nabla P_{N}(u))\|_{2}^{2}
 \leq M\|(\nabla\times u, u)\|_{2}^{2}   \label{00.42}
\end{equation}
for any $u\in H^{2}(\Omega)$ and any integer $N$.
\end{corollary}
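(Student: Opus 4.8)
The plan is to assemble \eqref{00.42} directly from Theorem \ref{th-n4}, the identities for the symmetric form $\mathcal{E}$, and the elliptic identity \eqref{est-june26-01a}; there is essentially no new difficulty here, the substance having already been extracted in Theorem \ref{th-n4}. First I would record the two structural facts used throughout: since $P_N$ maps $L^2(\Omega)$ onto $X_N\subseteq K_2(\Omega)$, we have $P_N(u)=P_N(P_\infty(u))$, and, for \emph{finite} $N$, $P_N(u)\in D_{0,\zeta}(S)$, so that $P_N(u)$ is smooth, divergence-free, and satisfies \eqref{kinematic-1}--\eqref{nav-f}. The case $N=\infty$ is immediate and can be dispatched at once: by Lemma \ref{le-0-1}, $\nabla\times P_\infty(u)=\nabla\times u$, and by \eqref{23-3-1} in Proposition \ref{pr-23-1}, $\|\nabla P_\infty(u)\|_2\le C\|(\nabla\times u,u)\|_2$; hence only finite $N$ needs to be treated.

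For the curl term, Theorem \ref{th-n4} gives, for every $\varepsilon>0$,
\[
\|\nabla\times P_N(u)\|_2^2\le (1+\varepsilon)\,\mathcal{E}\big(P_\infty(u),P_\infty(u)\big)+M(\varepsilon,\zeta)\|u\|_2^2,
\]
so it suffices to bound $\mathcal{E}(P_\infty(u),P_\infty(u))$ by $\|(\nabla\times u,u)\|_2^2$. Since $u\in H^2(\Omega)\subset H^1(\Omega)$ (indeed only $H^1$ is needed), Lemma \ref{le-0-1} and Proposition \ref{pr-23-1} give $P_\infty(u)\in K_2(\Omega)\cap H^1(\Omega)=D(\mathcal{E})$ with $\|\nabla P_\infty(u)\|_2\le C\|(\nabla\times u,u)\|_2$; then the second inequality in \eqref{00-01} of Corollary \ref{co-n-01} yields $\mathcal{E}(P_\infty(u),P_\infty(u))\le (1+\varepsilon)\|\nabla P_\infty(u)\|_2^2+M(\varepsilon,\zeta)\|P_\infty(u)\|_2^2\le M(\zeta)\|(\nabla\times u,u)\|_2^2$, using also $\|P_\infty(u)\|_2\le\|u\|_2$. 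Combining, $\|\nabla\times P_N(u)\|_2^2\le M(\zeta)\|(\nabla\times u,u)\|_2^2$.

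For the gradient term, I would use that $P_N(u)$ is divergence-free with $P_N(u)^{\bot}|_\Gamma=0$, so \eqref{est-june26-01a} reads $\|\nabla P_N(u)\|_2^2=\|\nabla\times P_N(u)\|_2^2-\int_\Gamma\pi(P_N(u),P_N(u))$. Since $\Gamma$ is compact, $-\pi\le C_0$ for some $C_0=C_0(\Omega)$, and applying the trace inequality \eqref{sobolev-trace-embedding} to $P_N(u)$ gives $-\int_\Gamma\pi(P_N(u),P_N(u))\le C_0\|P_N(u)\|_{L^2(\Gamma)}^2\le \frac{1}{2}\|\nabla P_N(u)\|_2^2+M(\Omega)\|P_N(u)\|_2^2$; absorbing the gradient term on the left and invoking the curl bound just proved, $\|\nabla P_N(u)\|_2^2\le 2\|\nabla\times P_N(u)\|_2^2+M\|u\|_2^2\le M(\zeta)\|(\nabla\times u,u)\|_2^2$. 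Adding the curl and gradient estimates yields \eqref{00.42}.

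The only point requiring care is the bookkeeping of which estimate applies to $P_N(u)$ and which to $P_\infty(u)$: the quantity $\|\nabla\times P_N(u)\|_2$ is controlled through the \emph{global} form $\mathcal{E}(P_\infty(u),P_\infty(u))$, and it is precisely Theorem \ref{th-n4} that bridges the finite-rank projection $P_N$ and the full form despite the presence of finitely many positive eigenvalues $\lambda_0\ge\cdots\ge\lambda_{N_0}>0$; once that is in hand there is no further obstacle, and the free parameter $\varepsilon$ appearing in the statement is vestigial — it may be fixed once and for all (e.g.\ $\varepsilon=1$) and absorbed into the final constant $M(\zeta)$.
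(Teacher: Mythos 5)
Your proposal is correct and follows essentially the same route as the paper's own (much terser) proof: Theorem \ref{th-n4} combined with the upper bound \eqref{00-01} on $\mathcal{E}(P_\infty(u),P_\infty(u))$ and the estimate \eqref{23-3-1} handles the curl term, and \eqref{est-june26-01a} plus the trace inequality handles the gradient term. Your observation that the $\varepsilon$ in the statement is vestigial is also accurate.
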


\begin{proof}
Note that $P_{\infty}(u)\in K_{2}(\Omega)\cap H^{2}(\Omega)$ so that
\begin{equation*}
\nabla\times P_{\infty}(u)=\nabla\times P_{\infty}(u)=\nabla\times u
\end{equation*}
and the estimate follows from
\begin{equation*}
\mathcal{E}(P_{\infty}(u),P_{\infty}(u)) \leq M\|(\nabla
P_{\infty}(u), u)\|_{2}^{2},
\end{equation*}
which yields (\ref{00.42}). Similarly, the estimate for $\nabla
P_N(u)$ follows from (\ref{00.42}) and (\ref{est-june26-01a}).
\end{proof}

\section{Existence of Weak and Strong Solutions}

In this section, we consider the initial-boundary value problem
\eqref{23-n-1}--\eqref{nav-f},
where $\zeta>0$ is a constant.
The minimal requirement on the initial data is that $u_{0}\in
K_{2}(\Omega)$.

First we introduce the notion of weak solutions to the
initial-boundary problem \eqref{23-n-1}--\eqref{nav-f}.
We say that a vector field $u(t,x)$ is a weak solution of
\eqref{23-n-1}--\eqref{nav-f}
with slip length $\zeta$, provided that $u(t,x)$ satisfies the
following conditions:

\begin{enumerate}\renewcommand{\theenumi}{\roman{enumi}}
\item  For each $t>0$, $u(t,\cdot)\in K_{2}(\Omega)$, and $u\in
L^{2}([0,T]; H^{1}(\Omega))$ for any $T>0$;

\item  For any smooth vector field $\varphi(t,x)$
with $\varphi(t,\cdot)\in K_{2}(\Omega)$,
\begin{eqnarray}
&&\int_{\Omega}\langle u(T, \cdot), \varphi(T, \cdot)\rangle\notag\\
&&=\langle u_0, \varphi_{0}\rangle+\int_{0}^{T}\int_{\Omega}\langle
u(t,\cdot),\partial_t\varphi(t,\cdot)\rangle\notag\\
&&\,\,\,\, -\int_{0}^{T}\int_{\Omega}\langle\nabla\times u,
\left(u\times\varphi+\mu\nabla\times\varphi\right)\rangle
-\frac{\mu }{\zeta }\int_{0}^{T}\int_{\Gamma}\langle u,\varphi
\rangle +2\mu \int_{0}^{T}\int_{\Gamma}\pi(u,\varphi);\qquad
\label{n-3-03}
\end{eqnarray}

\item The energy inequality:
\begin{equation}
\|u(T,\cdot)\|_{2}^{2}+2\mu \int_{0}^{T}\|\nabla u\|_{2}^{2}
+2\mu\int_{0}^{T}\int_{\Gamma}\Big(\frac{1}{\zeta }|u|^{2}-\pi
(u,u)\Big) \leq \|u_{0}\|_{2}^{2}\text{.} \label{24-2-02}
\end{equation}
\end{enumerate}

Equation \eqref{n-3-03} is obtained by integrating (\ref{23-n-1})
and performing formally integration by parts.

\subsection{Construction of global weak solutions}

Notice that $(S, D_{\zeta}(S))$ has
the eigenvalues $\Lambda\geq\lambda_0\geq\lambda_{1}\geq
\cdots\geq\lambda_{n}\rightarrow -\infty$ and the eigenvector
functions $\{a_{n}\,:\, n=1,2,\cdots\}$, which form an orthonormal
basis of $K_{2}(\Omega)$. Then $Sa_{n}=\lambda_{n}a_{n}$, and
$a_{n}$ solves the Stokes equation:
\begin{equation}
\Delta a_{n}-\nabla p_{n}=\lambda _{n}a_{n}\text{,} \qquad
\nabla\cdot a_{n}=0, \label{2-0-1}
\end{equation}
subject to the kinematic condition \eqref{kinematic-1} and the
Navier's $\zeta$-condition \eqref{nav-f}.

Let
$$
u(t,\cdot)=\sum_{k=1}^{\infty}c_{k}(t)a_{k}\qquad\text{with}\,\,\,
c_{k}(t)=\int_{\Omega}\langle a_{k},u\rangle,
$$
be a solution of the
Navier-Stokes equation (\ref{23-n-1}) with initial data $u_{0}$.
Multiplying by $a_{k}$ and integrating over $\Omega$, we obtain
\begin{eqnarray*}
\partial_t c_{k} =\mu\int_{\Omega}\langle a_{k},\Delta
u\rangle -\int_{\Omega}\langle a_{k}, u\cdot \nabla u\rangle =\mu
\lambda_{k}c_{k}-\sum_{i,j=0}^{\infty}c_{i}c_{j}\int_{\Omega}\langle
a_{k},a_{i}\cdot \nabla a_{j}\rangle \text{.}
\end{eqnarray*}
Thus, for each integer $N$, we solve the Cauchy problem for the
system of differential equations:
\begin{eqnarray}
&&\frac{d}{dt}c_{k}=\mu
\lambda_{k}c_{k}-\sum_{i,j=0}^{N}c_{i}c_{j}\int_{\Omega}\langle
a_{k},a_{i}\cdot\nabla a_{j}\rangle  \label{ord-30-01}\\
&&c_k|_{t=0}=\int_{\Omega}\langle a_k, u_{0}\rangle.
\end{eqnarray}

Define
$$
u^{N}(t,\cdot)=\sum_{k=0}^{N}c_{k}(t)a_{k}.
$$
Then $u^N(t,\cdot)\in D_{0,\zeta}(S)$ for $t>0$ and satisfies the
evolution equation:
\begin{equation}
\partial_t u^N
=\mu S(u^N)-\sum_{k=1}^{N}a_{k}\int_{\Omega}\langle a_{k},
u^N\cdot\nabla u^N\rangle. \label{ord-30-02}
\end{equation}
Therefore, $\nabla\cdot u^N=0$, $\left.
(u^N)^{\bot}\right|_{\Gamma}=0$, and $\left. (\nabla \times
u^N)^{\Vert}\right|_{\Gamma}=-\frac{1}{\zeta}(\ast u^N)+2\big(\ast
\pi(u^N)\big)$ for $t>0$.

Now we make the energy estimates for $u^N$ and $u^N_{t}$. First, it
is easy to see that
\begin{eqnarray*}
\frac{d}{dt}\|u^N\|_{2}^{2} &=&2\sum_{k=1}^{N}c_{k}\frac{d}{dt}c_{k}
=2\mu \sum_{k=1}^{N}\lambda_{k}c_{k}^{2}-2\int_{\Omega}\langle u^N,
u^N\cdot\nabla u^N\rangle \text{.}
\end{eqnarray*}
Since
\begin{eqnarray*}
\sum_{k=1}^{N}\lambda_{k}c_{k}^{2} =\int_{\Omega}\langle
S(u^N),u^N\rangle =-\int_{\Omega}|\nabla
u^N|^{2}-\frac{1}{\zeta}\int_{\Gamma}|u^N|^{2}+\int_{\Gamma}\pi(u^N,u^N),
\end{eqnarray*}
we have
\begin{eqnarray*}
\frac{d}{dt}\|u^N\|_{2}^{2} =-2\mu \int_{\Omega}|\nabla
u^N|^{2}-\int_{\Omega}u^N\cdot \nabla (|u^N|^{2})
-\frac{2\mu}{\zeta}\int_{\Gamma}|u^N|^{2}+2\mu \int_{\Gamma}\pi(u^N,
u^N).
\end{eqnarray*}
Since $\nabla\cdot u^N=0$ and $(u^N)^{\bot}=0$, which implies
$\int_{\Omega}u^N\cdot\nabla(|u^N|^{2})=0$, we obtain the energy
balance identity:
\begin{equation}
\frac{d}{dt}\|u^N\|_{2}^{2}+2\mu \|\nabla
u^N\|_{2}^{2}=-\frac{2\mu}{\zeta} \|u^N\|_{L^{2}(\Gamma)}^{2}
+2\mu\int_{\Gamma}\pi(u^N,u^N). \label{en-01}
\end{equation}
Therefore, we have
\begin{equation}
\|u^N(T,\cdot)\|_{2}^{2}+2\mu \int_{0}^{T}\|\nabla u^N\|_{2}^{2}
+2\mu\int_{0}^{T}\int_{\Gamma}\Big(\frac{1}{\zeta}|u^N|^{2}
-\pi(u^N,u^N)\Big) =\|u_{0}^N\|_{2}^{2}\le \|u_0\|_2^2.
\label{24-2-01}
\end{equation}

Using the Sobolev embedding inequality:
$$
\int_{\Gamma} |u|^2 \le \epsilon \|\nabla u\|^2_2 + C(\epsilon)
\|u\|^2_2,
$$
we have
$$
\int_\Gamma\pi(u^N,u^N) \le \frac{1}{2}\|\nabla u^N\|^2_2
+C(\varepsilon)\|u^N\|^2_2.
$$
Then, from \eqref{24-2-01}, we have
\begin{equation}\label{energy-est}
\|u^N(T,\cdot)\|_{2}^{2}+\mu \int_{0}^{T}\|\nabla u^N\|_{2}^{2}
+\frac{2\mu}{\zeta}\int_{0}^{T}\int_{\Gamma}|u^N|^{2} \le
\|u_{0}\|_{2}^{2}+C\int_0^T\|u^N(s,\cdot)\|_2^2.
\end{equation}
The Gronwall inequality and \eqref{energy-est} imply that
$\|u^{N}(t,\cdot)\|_{2}^{2}$ and $ \int_{0}^{t}\|\nabla
u^{N}\|_{2}^{2}$ are uniformly bounded in $t$, $\zeta$, and $N$. The
\emph{apriori} estimate (\ref{24-2-01}) also ensures that, for each
integer $N$, system (\ref{ord-30-01}) has a unique solution for all
$t>0$. Then we conclude

\begin{theorem}\label{w-th-01} Let $u_0\in K_2(\Omega)$. Then, for any $T>0$,
the family $\{u^{N}(t,x)\}, 0\le t\leq T$, is weakly compact in the
space $L^{2}([0,T]; K_{2}(\Omega))$ so that it has a convergent
subsequence that converges to a vector field $u\in L^2([0, T];
K_2(\Omega))$, and the limit function $u(t,x)$ is a weak solution to
problem \eqref{23-n-1}--\eqref{nav-f}.
\end{theorem}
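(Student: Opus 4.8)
The plan is to carry out the classical Galerkin--Leray compactness scheme; the only features specific to the present problem are the boundary integrals in the weak formulation \eqref{n-3-03} and in the energy identity \eqref{24-2-01}. From \eqref{energy-est} and Gronwall's inequality, the family $\{u^N\}$ is bounded in $L^{\infty}([0,T];K_2(\Omega))$ and in $L^2([0,T];H^1(\Omega))$, and, by the trace inequality \eqref{sobolev-trace-embedding}, $\{u^N|_{\Gamma}\}$ is bounded in $L^2([0,T];L^2(\Gamma))$, all uniformly in $N$ (and in $\zeta$ and $t$). In particular $\{u^N\}$ is weakly relatively compact in $L^2([0,T];K_2(\Omega))$, and after extracting a subsequence we have $u^N\rightharpoonup u$ weakly in $L^2([0,T];H^1(\Omega))$ and weak-$*$ in $L^{\infty}([0,T];L^2(\Omega))$, with $u(t,\cdot)\in K_2(\Omega)$ for a.e.\ $t$ since $K_2(\Omega)$ is closed.

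To pass to the limit in the convection term one needs strong $L^2$ convergence, which I would obtain from the Aubin--Lions--Simon lemma after bounding $\partial_t u^N$. Testing the evolution equation \eqref{ord-30-02} against $w\in X_N$ gives $\int_{\Omega}\langle\partial_t u^N,w\rangle = -\mu\,\mathcal{E}(u^N,w)-\int_{\Omega}\langle u^N\cdot\nabla u^N,w\rangle$; by \eqref{bi-0-21} and the trace inequality, $|\mathcal{E}(u^N,w)|\le C\|u^N\|_{H^1}\|w\|_{H^1}$, while Ladyzhenskaya's interpolation $u^N\in L^{\infty}_tL^2_x\cap L^2_tH^1_x\hookrightarrow L^4_tL^3_x$ together with H\"{o}lder's inequality gives $u^N\cdot\nabla u^N$ bounded in $L^{4/3}([0,T];L^{6/5}(\Omega))\hookrightarrow L^{4/3}([0,T];(H^1(\Omega))^{*})$. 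Since $P_N$ is bounded on $K_2(\Omega)\cap H^1(\Omega)$ uniformly in $N$ (which follows from \eqref{est-june26-01a} and Theorem \ref{th-n4}) and $\cup_N X_N$ is dense in $K_2(\Omega)\cap H^1(\Omega)$ in the $H^1$-norm (Theorem \ref{th-n-01}), this yields a uniform bound for $\partial_t u^N$ in $L^{4/3}([0,T];(K_2(\Omega)\cap H^1(\Omega))^{*})$. Applying Aubin--Lions--Simon with the chain $K_2\cap H^1\hookrightarrow\hookrightarrow K_2\cap H^s\hookrightarrow(K_2\cap H^1)^{*}$ for some $s\in(\tfrac12,1)$ gives $u^N\to u$ strongly in $L^2([0,T];H^s(\Omega))$, hence strongly in $L^2([0,T];L^2(\Omega))$ and, via the trace embedding $H^s(\Omega)\hookrightarrow L^2(\Gamma)$ ($s>\tfrac12$), strongly in $L^2([0,T];L^2(\Gamma))$. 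The same time-derivative bound also gives $u^N\to u$ in $C([0,T];(K_2\cap H^1)^{*})$, so $u$ has a representative weakly continuous into $L^2(\Omega)$ and $u^N(t,\cdot)\rightharpoonup u(t,\cdot)$ in $L^2(\Omega)$ for every $t$.

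With these convergences the passage to the limit is routine. The Galerkin fields satisfy \eqref{n-3-03} with $u^N,u_0^N$ in place of $u,u_0$ (as one checks from \eqref{ord-30-02} using the vector identity \eqref{v-24-1}, the identity \eqref{est-june26-01a}, and the boundary conditions for $u^N$), and it suffices to verify the limiting identity for $\varphi$ in the dense class $\cup_M X_M$ and then invoke density: the terms $\int_{\Omega}\langle u^N,\varphi\rangle$, $\int_0^T\!\int_{\Omega}\langle u^N,\partial_t\varphi\rangle$ and $\int_0^T\!\int_{\Omega}\langle\nabla\times u^N,\nabla\times\varphi\rangle$ converge by weak $L^2([0,T];H^1)$ convergence; the boundary terms $\int_0^T\!\int_{\Gamma}\langle u^N,\varphi\rangle$ and $\int_0^T\!\int_{\Gamma}\pi(u^N,\varphi)$ by the strong $L^2(\Gamma)$ convergence; and the convection term $\int_0^T\!\int_{\Omega}\langle\nabla\times u^N,u^N\times\varphi\rangle$ because $u^N\times\varphi\to u\times\varphi$ strongly in $L^2([0,T];L^2(\Omega))$ while $\nabla\times u^N\rightharpoonup\nabla\times u$ weakly there. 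For the energy inequality \eqref{24-2-02} I would pass to the limit in \eqref{24-2-01}: $\|u(T,\cdot)\|_2^2$ and $\int_0^T\|\nabla u\|_2^2$ are bounded by the $\liminf$ of the corresponding $u^N$-quantities (weak lower semicontinuity), the boundary contribution $2\mu\int_0^T\!\int_{\Gamma}\big(\tfrac{1}{\zeta}|u|^2-\pi(u,u)\big)$ is the actual limit of the $u^N$-term by the strong $L^2([0,T];L^2(\Gamma))$ convergence (the indefinite sign of $\pi$ is harmless precisely because the convergence is strong), and $\|u_0^N\|_2^2\to\|u_0\|_2^2$; combining these gives \eqref{24-2-02}.

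The main obstacle is the compactness step of the second paragraph: one must produce a time-derivative estimate that is simultaneously strong enough to run Aubin--Lions, compatible with the Galerkin projection $P_N$, and --- crucially in this boundary-value setting --- strong enough that the resulting convergence can be upgraded to strong convergence on $\Gamma$, which is what makes the boundary integrals in \eqref{n-3-03} and the boundary term in the energy identity pass to the limit. The rest is the standard Leray--Hopf construction.
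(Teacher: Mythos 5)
Your proposal is correct and follows essentially the same Galerkin--energy-estimate route as the paper, whose own argument stops at the uniform bounds \eqref{24-2-01} and \eqref{energy-est} (via Gronwall) and leaves the compactness and limit-passage implicit. Your second paragraph --- the time-derivative bound, the uniform $H^{1}$-boundedness of $P_{N}$ via \eqref{est-june26-01a} and Theorem \ref{th-n4}, and the Aubin--Lions step yielding strong convergence up to the boundary --- supplies exactly the details the paper omits, and they are the right ones for passing to the limit in the convection term and in the boundary integrals of \eqref{n-3-03} and \eqref{24-2-02}.
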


\medskip
Furthermore, we have
\begin{equation}
\|u_{\zeta}(T,\cdot)\|_{2}^{2}+\mu \int_{0}^{T}\int_{\Omega}\|\nabla
u_{\zeta}(t,\cdot)\|_{2}^{2}+\frac{2\mu}{\zeta}\int_0^T\int_\Gamma|u_\zeta|^2\leq
\|u_{0}\|_{2}^{2}\text{,} \label{24-2-04}
\end{equation}
where we have used $u_{\zeta}$ to indicate the dependence on the
slip length $\zeta>0$. The uniform energy estimate (\ref{24-2-04})
implies that the family $u_{\zeta}(t,x)$
is pre-compact in $L^{2}([0,T]; K_{2}(\Omega))$ and hence there
exists a convergent subsequence (still denoted)
$u_{\zeta}\rightarrow u$, when $\zeta\to 0$. From \eqref{24-2-04},
we have
$$
\int_0^T\int_\Gamma|u_\zeta|^2\le \frac{\zeta}{2\mu}\|u_0\|^2_2,
$$
which leads to
\begin{equation*}
\int_{0}^{T}\int_{\Gamma}|u|^{2}=0.
\end{equation*}
This implies that the limit $u(t,\cdot)$ is subject to the no-slip
condition for almost all time $t$. On the other hand, when $\zeta\to
\infty$, we obtain that there also exists a subsequence (still
denoted) $u_\zeta(t,x)$ converging to $u(t,x)$ in $L^2([0,T];
K_2(\Omega))$ such that $u(t,x)$ is a solution to \eqref{23-n-1}
subject to the complete slip boundary condition:
\begin{equation}\label{complete-slip}
\left. \omega^{\Vert}\right|_{\Gamma} =2\big(\ast\pi(u)\big)
\end{equation}
in the weak sense.

\begin{theorem}\label{w-th-02} Let $u_\zeta(t,x),\, 0\le t\leq T$, be
a weak solution to problem \eqref{23-n-1}--\eqref{nav-f} constructed
in Theorem {\rm \ref{w-th-01}}. Then

{\rm (i)} when $\zeta\to 0$, there exists a subsequence (still
denoted) $u_\zeta(t,x)$ converging to $u(t,x)$ in $L^2([0,T];
K_2(\Omega))$ such that $u(t,x)$ is a solution to \eqref{23-n-1}
subject to the no-slip condition for almost all time $t$;

{\rm (ii)} when $\zeta\to \infty$, there  exists a subsequence
(still denoted) $u_\zeta(t,x)$ converging to $u(t,x)$ in $L^2([0,T];
K_2(\Omega))$ such that $u(t,x)$ is a solution to \eqref{23-n-1}
subject to the complete slip boundary condition
\eqref{complete-slip}.
\end{theorem}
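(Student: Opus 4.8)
The plan is to treat the two limits $\zeta\to0$ and $\zeta\to\infty$ by a single compactness scheme: from the uniform energy bounds one extracts a subsequence converging strongly in $L^{2}([0,T];K_{2}(\Omega))$ and weakly in $L^{2}([0,T];H^{1}(\Omega))$, and then passes to the limit in the weak formulation \eqref{n-3-03}, treating the two $\zeta$-dependent boundary integrals differently in the two regimes. First I would record the needed uniform estimates. Since $\Gamma$ is compact and smooth, $\pi$ is bounded, so the trace inequality \eqref{sobolev-trace-embedding} lets one absorb $2\mu\int_{\Gamma}\pi(u_{\zeta},u_{\zeta})$ into $\mu\|\nabla u_{\zeta}\|_{2}^{2}+C\|u_{\zeta}\|_{2}^{2}$ in the energy balance \eqref{en-01}; Gronwall's inequality then bounds $\sup_{[0,T]}\|u_{\zeta}\|_{2}^{2}+\int_{0}^{T}\|\nabla u_{\zeta}\|_{2}^{2}+\frac{1}{\zeta}\int_{0}^{T}\|u_{\zeta}\|_{L^{2}(\Gamma)}^{2}$ by a constant depending on $T$ but not on $\zeta\in(0,\infty)$ (when $\pi\le\frac{1}{\zeta}$ one has \eqref{24-2-04} directly). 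In particular $\{u_{\zeta}\}$ is bounded in $L^{\infty}([0,T];L^{2})\cap L^{2}([0,T];H^{1})$, hence in $L^{4}([0,T]\times\Omega)$ by interpolation, so the convective term $u_{\zeta}\cdot\nabla u_{\zeta}$ is bounded in $L^{2}([0,T];Y^{*})$ uniformly in $\zeta$, where $Y$ denotes either $H_{0}^{1}(\Omega)\cap K_{2}(\Omega)$ or $H^{1}(\Omega)\cap K_{2}(\Omega)$.

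The compactness is the main point, and the delicate issue is a uniform-in-$\zeta$ bound on $\partial_{t}u_{\zeta}$ despite the factor $1/\zeta$ in \eqref{n-3-03}; the remedy is to pair $\partial_{t}u_{\zeta}$ with different spaces in the two regimes. For $\zeta\to0$ I would use test fields $\varphi(t,\cdot)\in H_{0}^{1}(\Omega)\cap K_{2}(\Omega)$: since $\varphi|_{\Gamma}=0$, both boundary integrals $\frac{\mu}{\zeta}\int_{\Gamma}\langle u_{\zeta},\varphi\rangle$ and $2\mu\int_{\Gamma}\pi(u_{\zeta},\varphi)$ of \eqref{n-3-03} vanish, leaving only $\mu\int_{\Omega}\langle\nabla\times u_{\zeta},\nabla\times\varphi\rangle$ and the convective term, controlled by $\|u_{\zeta}\|_{H^{1}}\|\varphi\|_{H^{1}}$ and $\|u_{\zeta}\|_{L^{4}}^{2}\|\nabla\varphi\|_{2}$. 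For $\zeta\to\infty$ I would use $\varphi(t,\cdot)\in H^{1}(\Omega)\cap K_{2}(\Omega)$: now $\frac{\mu}{\zeta}\int_{\Gamma}\langle u_{\zeta},\varphi\rangle=O(1/\zeta)$ because $\int_{0}^{T}\|u_{\zeta}\|_{L^{2}(\Gamma)}^{2}$ is bounded by the trace inequality and the uniform $H^{1}$ bound, and $2\mu\int_{\Gamma}\pi(u_{\zeta},\varphi)$ is likewise bounded. In either case one reads off from \eqref{n-3-03} that $\partial_{t}u_{\zeta}$ is bounded in $L^{4/3}([0,T];Y^{*})$ uniformly in $\zeta$; since $H^{1}(\Omega)\cap K_{2}(\Omega)$ is compactly embedded in $K_{2}(\Omega)$, which embeds continuously into $Y^{*}$, the Aubin--Lions--Simon lemma yields a subsequence with $u_{\zeta}\to u$ strongly in $L^{2}([0,T];K_{2}(\Omega))$, together with $u_{\zeta}\rightharpoonup u$ weakly in $L^{2}([0,T];H^{1})$, weak-$*$ in $L^{\infty}([0,T];L^{2})$, and (the trace operator being bounded) $u_{\zeta}|_{\Gamma}\rightharpoonup u|_{\Gamma}$ weakly in $L^{2}([0,T];L^{2}(\Gamma))$.

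Finally I would pass to the limit in \eqref{n-3-03}. The interior terms converge in the standard way: the linear ones by weak $L^{2}$-convergence of $\nabla u_{\zeta}$, and $\int_{0}^{T}\int_{\Omega}\langle\nabla\times u_{\zeta},u_{\zeta}\times\varphi\rangle\to\int_{0}^{T}\int_{\Omega}\langle\nabla\times u,u\times\varphi\rangle$ because $\nabla\times u_{\zeta}\rightharpoonup\nabla\times u$ weakly in $L^{2}$ while $u_{\zeta}\times\varphi\to u\times\varphi$ strongly in $L^{2}$. For (i), taking $\varphi$ supported in $\Omega$ so that all boundary integrals drop on both sides, the bound $\frac{1}{\zeta}\int_{0}^{T}\|u_{\zeta}\|_{L^{2}(\Gamma)}^{2}\le\frac{1}{2\mu}\|u_{0}\|_{2}^{2}$ forces $\int_{0}^{T}\int_{\Gamma}|u_{\zeta}|^{2}\to0$, hence $\int_{0}^{T}\int_{\Gamma}|u|^{2}=0$ by weak lower semicontinuity of the trace; thus $u(t,\cdot)\in H_{0}^{1}(\Omega)\cap K_{2}(\Omega)$ for a.e.\ $t$ and $u$ is a Leray--Hopf solution of \eqref{23-n-1} subject to the no-slip condition. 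For (ii), $\frac{\mu}{\zeta}\int_{0}^{T}\int_{\Gamma}\langle u_{\zeta},\varphi\rangle\to0$ and $2\mu\int_{0}^{T}\int_{\Gamma}\pi(u_{\zeta},\varphi)\to2\mu\int_{0}^{T}\int_{\Gamma}\pi(u,\varphi)$ (the latter since $\pi$ is symmetric and $\pi\varphi$ is a fixed element of $L^{2}([0,T];L^{2}(\Gamma))$), so $u$ satisfies \eqref{n-3-03} with $\frac{1}{\zeta}$ replaced by $0$; undoing the integration by parts that produced \eqref{n-3-03} and invoking Proposition \ref{navier-c} with $\zeta=\infty$, this is exactly the weak formulation of \eqref{23-n-1} subject to the complete-slip condition \eqref{complete-slip}. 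In both cases the energy inequality for $u$ follows from the uniform estimates and weak lower semicontinuity of the norms, after discarding the nonnegative term $\frac{2\mu}{\zeta}\int_{0}^{T}\int_{\Gamma}|u_{\zeta}|^{2}$. I expect the compactness step to be the principal obstacle — specifically, securing the $\zeta$-uniform control of $\partial_{t}u_{\zeta}$ in the presence of the $1/\zeta$ boundary term, which is what forces the two different choices of test-function space above.
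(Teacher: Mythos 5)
Your proposal is correct, and in outline it follows the paper's route: the uniform energy estimate \eqref{24-2-04} (with the Gronwall absorption of the $\pi$-term as in \eqref{energy-est}), extraction of a strongly convergent subsequence in $L^2([0,T];K_2(\Omega))$, the bound $\int_0^T\int_\Gamma|u_\zeta|^2\le \frac{\zeta}{2\mu}\|u_0\|_2^2$ forcing the trace of the limit to vanish as $\zeta\to 0$, and passage to the limit in \eqref{n-3-03} with the $\tfrac{1}{\zeta}$ boundary term dropping out as $\zeta\to\infty$. Where you go beyond the paper is the compactness step itself: the paper simply asserts that \eqref{24-2-04} implies pre-compactness in $L^2([0,T];K_2(\Omega))$, whereas an $L^\infty_tL^2_x\cap L^2_tH^1_x$ bound alone gives only weak compactness; some equicontinuity in time is needed, and your uniform-in-$\zeta$ bound on $\partial_t u_\zeta$ in $L^{4/3}([0,T];Y^*)$ -- with the regime-dependent choice $Y=H^1_0\cap K_2$ for $\zeta\to 0$ (so the $\tfrac{1}{\zeta}$ boundary integral vanishes identically on test fields) versus $Y=H^1\cap K_2$ for $\zeta\to\infty$ (where it is $O(1/\zeta)$) -- is exactly the ingredient that makes the Aubin--Lions argument, and hence the paper's claim, rigorous. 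This is a genuine improvement rather than a different method. One cosmetic slip: in $\R^3$ interpolation of $L^\infty_tL^2_x\cap L^2_tH^1_x$ gives $L^{10/3}([0,T]\times\Omega)$ (equivalently $L^{8/3}_tL^4_x$), not $L^4([0,T]\times\Omega)$, and correspondingly the convective term lands in $L^{4/3}([0,T];Y^*)$ rather than $L^2$; since the bound you actually invoke for Aubin--Lions is the $L^{4/3}$ one, this does not affect the argument.
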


The nonhomogeneous vorticity boundary problem related to
\eqref{complete-slip} has been investigated in \cite{C-Q1}.

\subsection{Strong solutions}

In this section, we prove that there exists a strong solution to
problem \eqref{23-n-1}--\eqref{nav-f} for small time. To this end,
we develop the $L^{2}$-estimates for $u^{N}$ up to second-order
derivatives, uniformly in $N$. More precisely, we prove the
following:

\begin{theorem}\label{m-th-24-03} Let $u_0\in K_2(\Omega)\cap
H^2(\Omega)$. Then there exist $T^{\ast}>0$ and $M>0$ depending only
on $\zeta$, $\varepsilon$, $\mu$, $\Omega$, and $\|u_{0}\|_{H^{2}}$
(but independent of $N$) such that
\begin{equation}
\|u^{N}(t,\cdot)\|_{H^{2}}^{2}+\|\partial_{t}u^{N}(t,\cdot)\|_{2}^{2}\leq
M\text{.} \label{e-25-1}
\end{equation}
\end{theorem}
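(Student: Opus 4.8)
The plan is to establish uniform (in $N$) a priori bounds for the Galerkin approximations $u^N$ in $H^2$ together with their time derivatives, by differentiating the evolution equation \eqref{ord-30-02} appropriately and combining the elliptic estimates of Section 3 with the spectral estimate of Theorem \ref{th-n4}. First I would take the inner product of \eqref{ord-30-02} with $S(u^N)$ (equivalently, test against $-\partial_t u^N + \text{nonlinear term}$, but the cleanest route is to pair $\partial_t u^N$ with $S(u^N)$). Since $S$ is self-adjoint on $K_2(\Omega)$ with $-S+\Lambda I\ge 0$, the quantity $\tfrac12\tfrac{d}{dt}\big(-\mathcal E(u^N,u^N)\big) = -\int_\Omega \langle \partial_t u^N, S(u^N)\rangle$ appears, and plugging in \eqref{ord-30-02} gives
\begin{equation*}
-\frac{1}{2}\frac{d}{dt}\mathcal E(u^N,u^N) + \mu\|S(u^N)\|_2^2 = \int_\Omega \langle u^N\cdot\nabla u^N,\, S(u^N)\rangle,
\end{equation*}
where the $P_N$ in the nonlinear term is absorbed because $S(u^N)\in X_N$. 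By \eqref{24-00-1} (applied to the divergence-free field $u^N$ satisfying \eqref{kinematic-1}--\eqref{nav-f}), $\|u^N\|_{H^2}^2$ is comparable to $\|S(u^N)\|_2^2 + \|u^N\|_2^2$ up to lower-order terms, so the dissipative term $\mu\|S(u^N)\|_2^2$ controls $\|u^N\|_{H^2}^2$ modulo the already-established energy bound \eqref{24-2-04}.

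The core work is to bound the nonlinear term $\int_\Omega\langle u^N\cdot\nabla u^N, S(u^N)\rangle$ by $\varepsilon\|S(u^N)\|_2^2$ plus a polynomial in the controlled quantities $\|u^N\|_{H^1}$ and $\|u^N\|_{H^2}$. Using Hölder and the Gagliardo--Nirenberg--Sobolev inequalities in three dimensions (e.g. $\|u^N\|_{L^6}\lesssim \|u^N\|_{H^1}$ and $\|\nabla u^N\|_{L^3}\lesssim \|\nabla u^N\|_2^{1/2}\|u^N\|_{H^2}^{1/2}$), one gets a bound of the form $C\|u^N\|_{H^1}\|u^N\|_{H^2}^{3/2}\|S(u^N)\|_2^{1/2}$ or similar; after Young's inequality this yields $\varepsilon\|S(u^N)\|_2^2 + C\|u^N\|_{H^1}^{4}\|u^N\|_{H^2}^2$ (exponents to be tuned). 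Writing $y(t) = -\mathcal E(u^N(t),u^N(t)) + \Lambda\|u^N(t)\|_2^2 \sim \|u^N\|_{H^1}^2$ is not enough; instead set $z(t) = \|u^N(t)\|_{H^2}^2$, use \eqref{24-00-1} to relate $z$ to $-\mathcal E(u^N,u^N)+$const, and obtain a differential inequality of Riccati type $\tfrac{d}{dt}z \le C(1 + z)^{p}$ with the coefficient involving the uniformly bounded energy $\int_0^T\|\nabla u^N\|_2^2\,dt$. Such an inequality gives a uniform bound on $z(t)$ on a time interval $[0,T^*]$ with $T^*$ depending only on $\|u_0\|_{H^2}$ (and $\mu,\zeta,\Omega$) but not on $N$ — this is exactly the local-in-time nature of the strong solution.

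Once $\|u^N(t,\cdot)\|_{H^2}\le M$ is established on $[0,T^*]$ uniformly in $N$, the bound on $\|\partial_t u^N\|_2$ follows immediately by reading off \eqref{ord-30-02}: $\|\partial_t u^N\|_2 \le \mu\|S(u^N)\|_2 + \|P_N(u^N\cdot\nabla u^N)\|_2 \le \mu\|\Delta u^N\|_2 + \|u^N\cdot\nabla u^N\|_2$, and the last term is controlled by $\|u^N\|_{L^\infty}\|\nabla u^N\|_2 \lesssim \|u^N\|_{H^2}^2$ via Sobolev embedding in $\R^3$. Here I would use the Remark after Lemma \ref{le-0-2} that $\|S(u)\|_2\le\|\Delta u\|_2$, together with $\|\Delta u^N\|_2 \le \|u^N\|_{H^2}$.

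The main obstacle is the nonlinear estimate and the associated choice of exponents so that the resulting differential inequality for $\|u^N\|_{H^2}^2$ actually closes on a short time interval with $N$-independent $T^*$: one must carefully exploit the three-dimensional critical Sobolev scaling (the term $u\cdot\nabla u$ paired with $\Delta u$ is precisely at the threshold where only local-in-time control is available), and one must ensure the boundary contributions hidden in the identity $-\int\langle\partial_t u^N, S(u^N)\rangle = \tfrac12\tfrac{d}{dt}(\|\nabla u^N\|_2^2 + \tfrac1\zeta\|u^N\|_{L^2(\Gamma)}^2 - \int_\Gamma\pi(u^N,u^N))$ from \eqref{bi-0-21} are handled — the $\tfrac1\zeta$-boundary term has a favorable sign, while $\int_\Gamma\pi(u^N,u^N)$ is absorbed by the trace embedding \eqref{sobolev-trace-embedding}, as in Corollary \ref{co-n-01}. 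A secondary technical point is that $S(u^N)$ need not lie in $D_{0,\zeta}(S)$, so the pairing $\int_\Omega\langle u^N\cdot\nabla u^N, S(u^N)\rangle$ should be interpreted via the Galerkin projection, but since $S(u^N)\in X_N$ this causes no difficulty.
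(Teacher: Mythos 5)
There is a genuine gap: your central energy identity controls the wrong derivative level. Pairing \eqref{ord-30-02} with $S(u^N)$ yields
\begin{equation*}
-\frac{1}{2}\frac{d}{dt}\mathcal E(u^N,u^N)+\mu\|S(u^N)\|_2^2=\int_\Omega\langle u^N\cdot\nabla u^N,S(u^N)\rangle,
\end{equation*}
and the quantity whose time derivative you control is $\mathcal E(u^N,u^N)\sim\|u^N\|_{H^1}^2$; the term $\mu\|S(u^N)\|_2^2$ enters only as time-integrated dissipation. After the Riccati argument this gives $u^N\in L^\infty(0,T^*;H^1)\cap L^2(0,T^*;H^2)$, which is strictly weaker than the pointwise-in-time bound $\|u^N(t,\cdot)\|_{H^2}^2\le M$ asserted in the theorem. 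The step where you propose to ``set $z(t)=\|u^N(t)\|_{H^2}^2$ and use \eqref{24-00-1} to relate $z$ to $-\mathcal E(u^N,u^N)+\mathrm{const}$'' cannot work: \eqref{24-00-1} compares $\|u\|_{H^2}$ with $\|\Delta u\|_2$, while $\mathcal E$ is an $H^1$-level form, so no such comparison exists, and your first-order identity produces no differential inequality for $z$. Your final paragraph then derives the $\|\partial_t u^N\|_2$ bound \emph{from} the $H^2$ bound, which was never established, so the argument does not close.

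To reach the stated conclusion one needs a genuinely second-order energy estimate. The paper obtains it by deriving the evolution equations for $u^N_t$ and for $\psi=-\Delta u^N$ (equations \eqref{ord-30-03} and \eqref{vo-30-02}), performing $L^2$ energy estimates on each, and closing a Riccati inequality for $F=\|(\psi,u,u^N_t)\|_2^2$ with $F(0)\le C\|u_0\|_{H^2}^2$. The real difficulty there --- which your proposal does not engage with --- is that the $\psi$-estimate produces the boundary term $\int_\Gamma\partial_\nu(|\psi|^2)$, and controlling it under the Navier condition requires Lemma \ref{le-23-4} together with the third-derivative estimate of Theorem \ref{le-23-6} and Corollary \ref{co-23-31}, so that the resulting $\varepsilon\|\nabla^3u^N\|_2^2$ can be absorbed into the dissipation $-2\mu\|\nabla\psi\|_2^2$. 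This boundary machinery (and likewise Corollary \ref{coro-9-2} for $\|\nabla P_N(u\times\omega)\|_2$) is the substance of the proof; the trace terms you do discuss, coming from \eqref{bi-0-21}, are the easy first-order ones.
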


\begin{proof}
For simplicity, we write $u=u^{N}$ given by (\ref{ord-30-01}).
Let $\omega =\nabla\times u$ and
$\psi=\nabla\times \omega =-\Delta u$ as usual. Recall that $u$
fulfils the evolution equation:
\begin{equation*}
\partial_t u=\mu S(u)-\sum_{k=1}^{N}a_{k}\int_{\Omega}\langle
a_{k},u\cdot \nabla u\rangle,
\end{equation*}
where, for $t>0$, $u(t,\cdot)\in D_{0,\zeta}(S)$. Taking the
$t$-derivative  yields the evolution equation:
\begin{equation}
\partial_tu_{t}
=\mu S(u_{t}) -\sum_{k=1}^{N}a_{k}\int_{\Omega}\langle
a_{k},u_{t}\cdot\nabla u\rangle
-\sum_{k=1}^{N}a_{k}\int_{\Omega}\langle a_{k},u\cdot\nabla
u_{t}\rangle,
\label{ord-30-03}
\end{equation}
and $u_{t}(t,\cdot)\in D_{0,\zeta}(S)$. Therefore, $\nabla\cdot
u_{t}=0$ and $u_{t}$ again satisfies the same boundary conditions as
those of $u$. Using the evolution equation (\ref{ord-30-03}), we
have
\begin{eqnarray}
\frac{d}{dt}\|u_{t}\|_{2}^{2} &=&2\mu \int_{\Omega}\langle S(u_{t}),
u_{t}\rangle -2\int_{\Omega}\langle u_{t},u_{t}\cdot\nabla u\rangle
-2\int_{\Omega}\langle u_{t}, u\cdot\nabla u_{t}\rangle \nonumber\\
&=&2\mu \int_{\Omega}\langle S(u_{t}),u_{t}\rangle
-2\int_{\Omega}\langle u_{t}, u_{t}\cdot\nabla u\rangle \text{.}
\label{5.11a}
\end{eqnarray}
Integration by parts yields
\begin{eqnarray*}
\int_{\Omega}\langle \Delta u_{t},u_{t}\rangle &=&-\|\nabla
u_{t}\|_{2}^{2}+\int_{\Gamma}\langle u_{t}\times \left(\nabla \times
u_{t}\right), \nu \rangle +\int_{\Gamma}\langle u_{t}\cdot\nabla
u_{t},\nu\rangle \\
&=&-\|\nabla u_{t}\|_{2}^{2}-\frac{1}{\zeta}\int_{\Gamma}\langle
u_{t}\times(\ast u_{t}),\nu \rangle +2\int_{\Gamma}\langle
u_{t}\times(\ast \pi(u_{t})), \nu\rangle
-\int_{\Gamma}\pi(u_{t},u_{t}) \\
&=&-\|\nabla
u_{t}\|_{2}^{2}-\frac{1}{\zeta}\int_{\Gamma}|u_{t}|^{2}+\int_{\Gamma}
\pi(u_{t},u_{t})\text{.}
\end{eqnarray*}
Substitution this into \eqref{5.11a} leads to
\begin{eqnarray}
\frac{d}{dt}\|u_{t}\|_{2}^{2}=-2\mu \|\nabla
u_{t}\|_{2}^{2}-4\int_{\Omega}\langle u_{t},u_{t}\cdot\nabla
u\rangle-\frac{2\mu}{\zeta}\int_{\Gamma}|u_{t}|^{2}
+2\nu\int_{\Gamma}\pi(u_{t},u_{t})\text{.} \label{time-01}
\end{eqnarray}
It follows that
\begin{eqnarray}
\frac{d}{dt}\|u_{t}\|_{2}^{2} &\leq &-\frac{2\mu}{\zeta}
\|u_{t}\|_{L^{2}(\Gamma)}^{2}-2\mu\|\nabla
u_{t}\|_{2}^{2}+4\|u_{t}\|_{2}^{2}\|\nabla u\|_{\infty}
+2\mu\int_{\Gamma}\pi(u_{t},u_{t})  \notag \\
&\leq &-\frac{2\mu}{\zeta}\|u_{t}\|_{L^{2}(\Gamma)}^{2} -\mu\|\nabla
u_{t}\|_{2}^{2}+\varepsilon \|\nabla^3u\|^2
+C\left(\|(\psi, u,
u_{t})\|_{2}^{2}+\|u_{t}\|_{2}^{4}\right),\qquad \label{e-25-02}
\end{eqnarray}
where the Sobolev's imbedding has been used and $C>0$ is a constant
depending only on the domain $\Omega$.

Next we deal with $\Delta u$. The evolution equation for
$u(t,\cdot)$ may be written as
\begin{equation*}
\partial_t u=\mu S(u)-P_{N}\left( u\cdot \nabla u\right).
\end{equation*}
Together with the vector identity $u\cdot \nabla u=\frac{1}{2}\nabla
|u|^{2}-u\times\omega$, we have
\begin{equation}
\partial_t u=\mu S(u)+P_{N}(u\times\omega)\text{. }
\label{9-0-11}
\end{equation}
Since $\nabla \times S(u)=\nabla \times (\Delta u)$, taking curl
(twice) to both sides of equation (\ref{9-0-11}) yields
\begin{equation}
\partial_t\omega =\mu \Delta\omega
+\nabla\times P_{N}(u\times\omega), \label{vo-30-01}
\end{equation}
and
\begin{equation}
\partial_t\psi =\mu \Delta \psi
+\nabla\times\nabla\times P_{N}\left(u\times\omega\right) \text{.}
\label{vo-30-02}
\end{equation}

It follows from (\ref{vo-30-02}) that
\begin{equation}
\frac{d}{dt}\|\psi\|_{2}^{2} =2\mu\int_{\Omega}\langle\Delta\psi,
\psi\rangle +2\int_{\Omega}\langle\nabla\times\nabla\times
P_{N}\left(u\times\omega \right), \psi \rangle\text{.}
\label{6-0-1}
\end{equation}
Integration by parts leads to
\begin{equation*}
2\mu\int_{\Omega}\langle\Delta\psi, \psi\rangle
=-2\mu\|\nabla\psi\|_{2}^{2} +\mu\int_{\Gamma}
\partial_\nu(|\psi|^{2}),
\end{equation*}
and
\begin{eqnarray*}
&&\int_{\Omega}\langle\nabla\times\nabla\times P_{N}
\left(u\times\omega\right), \psi \rangle\\
&&=\int_{\Omega}\langle\nabla\times P_{N}\left(u\times\omega\right),
\nabla\times\psi\rangle -\int_{\Gamma}\langle\psi\times
\left(\nabla\times P_{N}\left(u\times\omega\right)\right), \nu\rangle  \\
&&=\int_{\Omega}\langle\nabla\times P_{N}\left(u\times\omega\right),
  \nabla \times\psi\rangle
  -\int_{\Gamma }\langle \psi \times \big(-\frac{1}{\zeta }(\ast
P_{N}\left( u\times \omega \right)) +2(\ast \pi \left( P_{N}\left(
u\times\omega \right)\right))\big), \nu \rangle  \\
&&=\int_{\Omega }\langle \nabla \times P_{N}\left(u\times\omega
\right),\nabla \times \psi\rangle
+\frac{1}{\zeta}\int_{\Gamma}\langle \psi,
P_{N}\left(u\times\omega\right)\rangle-2\int_{\Gamma}\pi\left(\psi,
P_{N}\left(u\times\omega\right)\right)\text{.}
\end{eqnarray*}
Therefore, we obtain
\begin{eqnarray}
\frac{d}{dt}\|\psi\|_{2}^{2}&=&-2\mu\|\nabla\psi\|_{2}^{2}
+2\int_{\Omega}\langle \nabla\times P_{N}\left(u\times\omega\right),
\nabla \times \psi \rangle +\mu\int_{\Gamma}\partial_\nu\big(|\psi|^{2}\big)
  \notag \\
&&+\frac{2}{\zeta}\int_{\Gamma}\langle\psi,
P_{N}\left(u\times\omega\right)\rangle -4\int_{\Gamma}\pi\left(\psi,
P_{N}\left(u\times\omega\right)\right) \text{.} \label{6-031}
\end{eqnarray}
Using the H\"{o}lder inequality, one obtains
\begin{eqnarray}
\frac{d}{dt}\|\psi\|_{2}^{2} &\leq &-2\mu
\|\nabla\psi\|_{2}^{2}+2\|\nabla\times
P_{N}\left(u\times\omega\right)\|_{2}\|\nabla\times\psi\|_{2}  \notag \\
&&+\mu
\int_{\Gamma}\partial_\nu\big(|\psi|^{2}\big)+M\|\psi\|_{L^{2}(\Gamma)}
\|P_{N}\left(u\times\omega\right)\|_{L^{2}(\Gamma)} \text{.}
\label{23-f-01}
\end{eqnarray}
The first boundary integral
$\int_{\Gamma}\partial_\nu\big(|\psi|^{2}\big)$ can be estimated by
using Lemma \ref{le-23-4} to obtain
\begin{equation*}
\frac{1}{2}\int_{\Gamma}\partial_\nu\big(|\psi|^{2}\big)\leq
\varepsilon \|\nabla^3 u\|_{2}^{2}+M\|(\psi, u)\|_{2}^{2}\text{.}
\end{equation*}
The product of last two boundary integrals in (\ref{23-f-01}) can be
estimated via the Sobolev imbedding to yield
\begin{eqnarray*}
\quad
M\|\psi\|_{L^{2}(\Gamma)}\|P_{N}\left(u\times\omega\right)\|_{L^{2}(\Gamma)}
\leq \big(\varepsilon\|\nabla\psi\|_{2}
  +M\|\psi\|_{2}\big) \big(\varepsilon\|\nabla P_{N}\left(u\times\omega\right)\|_{2}
  +M\|u\times \omega\|_{2}\big).
\end{eqnarray*}
Plugging these estimates into (\ref{23-f-01}), using Corollary
\ref{coro-9-2} and the estimate
\begin{eqnarray*}
\|u\times\omega\|_{2}&\leq & M\|u\|_{H^{1}}^{2}=M\|(\psi,
u)\|_{2}^{2},
\end{eqnarray*}
and rearranging the inequality, we obtain
\begin{eqnarray}
\frac{d}{dt}\|\psi\|_{2}^{2}
&\leq&-2\mu\|\nabla\psi\|_{2}^{2}+\varepsilon\|\psi\|_{H^{1}}^{2}+\varepsilon
\|\nabla^3 u\|_{2}^{2} +M\|\nabla
P_{N}(u\times\omega)\|_{2}\|\psi\|_{H^{1}}
\notag \\
&&+M\left( \|(\psi, u)\|_{2}^{2}+\|(\psi,u)\|_{2}^{4}\right).
\label{23-f-02}
\end{eqnarray}
Finally, we use Corollary \ref{coro-9-2} to obtain
\begin{eqnarray*}
\|\nabla P_{N}\left(u\times \omega \right)\|_{2} &\leq
&M\|\nabla\times\left(u\times\omega\right)\|_{2}
+M\|u\times\omega\|_{2}\\
&\leq &M\|\big(\omega\cdot\nabla u,\, u\cdot\nabla\omega,\,
u\times\omega\big)\|_{2} \leq M\|u\|_{H^{2}}^{2}.
\end{eqnarray*}
Then we conclude
\begin{equation}
\frac{d}{dt}\|\psi\|_{2}^{2}\leq -\frac{3}{2}\mu
\|\nabla\psi\|_{2}^{2}+M\big(\|(\psi, u)\|_{2}^{2} +\|(\psi,
u)\|_{2}^{4}\big) \label{23-5-01}
\end{equation}
for some constant $M$ depending only on $\zeta, \mu$, and $\Omega$.

Let $F=\|(\psi, u, u_{t})\|_{2}^{2}$. Combining (\ref {en-01}) and
(\ref{e-25-02}) with (\ref{23-5-01}), we obtain the following
differential inequality:
\begin{eqnarray}
\frac{d}{dt}F \leq -\mu \|\nabla(\psi, u,
u_{t})\|_{2}^{2}-\frac{2\mu}{\zeta}\|(u,
u_{t})\|_{L^{2}(\Gamma)}^{2}+M_{1}F+M_{2}F^{2} \label{e3-05}
\end{eqnarray}
for some constants $M_1$ and $M_2$ depending only on $\zeta$ and
$\Omega$. In particular, we have
\begin{equation}
\frac{d}{dt}F\leq M_{1}F+M_{2}F^{2}\text{.}  \label{e3-06}
\end{equation}
Since
\begin{eqnarray*}
\|u_{t}\|_{2} \leq \mu \|S(u)\|_{2}+\|P_{N}\left( u\times \omega
\right)\|_{2} \leq 2\mu\|\Delta u\|_{2}+\|u\times\omega\|_{2}\leq
2\mu\|\Delta u\|_{2}+\|u\|_{2}\|\omega\|_{2},
\end{eqnarray*}
then
$$
F(0)\leq C\left(\mu,\Omega\right) \|u_{0}\|_{H^{2}}^{2}
$$
for some constant $C\left(\mu, \Omega\right)$ depending only on
$\mu$ and $\Omega$.

Let $\rho$ be the solution on $[0,T^{\ast})$ to the ordinary
differential equation:
\begin{equation}
\rho^{\prime}=M_{1}\rho +M_{2}\rho^{2}\text{ , \ \ \ } \rho
(0)=C\left(\mu, \Omega\right)\|u_{0}\|_{H^{2}}^{2}, \label{e3-07}
\end{equation}
where $T^{\ast}>0$ is the blowup time of $\rho$.

Then inequality (\ref{e3-06}) together with the fact that
$F(0)\leq\rho(0)$ implies that $F(t)\leq \rho(t)$ on $[0,T^{\ast})$.
This completes the proof of Theorem \ref{m-th-24-03}.
\end{proof}

\begin{theorem}
Let $u_{0}\in K_{2}(\Omega)\cap H^{2}(\Omega)$. Then there exists
$T^{\ast}>0$ depending only on $\zeta, \mu$, $\Omega$, and
$\|u_{0}\|_{H^{2}}$ such that there is a strong solution $u(t,x)$ of
the initial-boundary value problem \eqref{23-n-1}--\eqref{nav-f} up
to $T^{\ast}>0$.
\end{theorem}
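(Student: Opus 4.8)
The plan is to obtain the strong solution as a limit of the Galerkin approximations $u^{N}$ defined by \eqref{ord-30-01}--\eqref{ord-30-02}, now exploiting the $N$-uniform second-order estimate of Theorem~\ref{m-th-24-03} in place of the mere energy bound \eqref{24-2-01} used for weak solutions. Let $T^{\ast}>0$ be the blow-up time of the comparison ODE \eqref{e3-07}, which depends only on $\zeta,\mu,\Omega$, and $\|u_{0}\|_{H^{2}}$, and fix any $T<T^{\ast}$. By \eqref{e-25-1}, $\{u^{N}\}$ is bounded in $L^{\infty}([0,T];H^{2}(\Omega)\cap K_{2}(\Omega))$ and $\{\partial_{t}u^{N}\}$ is bounded in $L^{\infty}([0,T];L^{2}(\Omega))$. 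Integrating the differential inequality \eqref{e3-05} over $[0,T]$ and using $F(t)\le\rho(t)\le C$ there, the dissipation term $-\mu\|\nabla(\psi,u,u_{t})\|_{2}^{2}$ forces $\{\nabla\psi^{N}\}$, with $\psi^{N}:=-\Delta u^{N}$, to be bounded in $L^{2}([0,T];L^{2}(\Omega))$; hence, by Corollary~\ref{co-23-31}, $\{u^{N}\}$ is bounded in $L^{2}([0,T];H^{3}(\Omega))$. Passing to a subsequence (not relabeled), $u^{N}\rightharpoonup u$ weakly-$\ast$ in $L^{\infty}([0,T];H^{2}\cap K_{2})$ and weakly in $L^{2}([0,T];H^{3})$, and $\partial_{t}u^{N}\rightharpoonup\partial_{t}u$ weakly-$\ast$ in $L^{\infty}([0,T];L^{2})$.

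Next I would invoke the Aubin--Lions--Simon compactness lemma: since $\{u^{N}\}$ is bounded in $L^{2}([0,T];H^{3})$, $\{\partial_{t}u^{N}\}$ is bounded in $L^{2}([0,T];L^{2})$, and $H^{3}(\Omega)\hookrightarrow\hookrightarrow H^{2}(\Omega)\hookrightarrow L^{2}(\Omega)$, the family $\{u^{N}\}$ is relatively compact in $L^{2}([0,T];H^{2}(\Omega))$; along a further subsequence, $u^{N}\to u$ strongly in $L^{2}([0,T];H^{2}(\Omega))$ and $u^{N}(t,\cdot)\to u(t,\cdot)$ strongly in $H^{2}(\Omega)$ for a.e.\ $t$, and $u$ has a representative in $C([0,T];H^{1}(\Omega))$ obtained by interpolating the $L^{\infty}([0,T];H^{2})$ bound against $\partial_{t}u\in L^{\infty}([0,T];L^{2})$. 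This is enough to pass to the limit in the nonlinear term: since $\nabla\times u^{N}\to\nabla\times u$ in $L^{2}([0,T];L^{2})$ and $u^{N}\to u$ in $L^{2}([0,T];L^{4})$ (via $H^{1}\hookrightarrow L^{4}$ in $\R^{3}$), one has
\[
\int_{0}^{T}\!\!\int_{\Omega}\langle\nabla\times u^{N},\,u^{N}\times\varphi\rangle \;\longrightarrow\; \int_{0}^{T}\!\!\int_{\Omega}\langle\nabla\times u,\,u\times\varphi\rangle
\]
for every fixed smooth $\varphi$ with $\varphi(t,\cdot)\in K_{2}(\Omega)$; together with the linear convergences and $u^{N}(0,\cdot)=P_{N}u_{0}\to u_{0}$ in $H^{1}$, the limit $u$ satisfies the weak identity \eqref{n-3-03} and $u(0,\cdot)=u_{0}$.

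Then I would upgrade $u$ to a genuine strong solution. Passing to the limit in \eqref{ord-30-02} tested against each fixed $a_{k}$ (using $P_{N}\to P_{\infty}$ strongly and $u^{N}\cdot\nabla u^{N}\to u\cdot\nabla u$ in $L^{2}([0,T];L^{2})$, which follows from $\|u^{N}\cdot\nabla u^{N}-u\cdot\nabla u\|_{2}\le C\|u^{N}-u\|_{H^{2}}$ and $H^{2}\hookrightarrow L^{\infty}$) and using completeness of $\{a_{k}\}$, one gets $\partial_{t}u=\mu S(u)-P_{\infty}(u\cdot\nabla u)$ in $L^{2}(\Omega)$ for a.e.\ $t$; writing $\Delta u=S(u)+\nabla p_{u}^{\mathrm S}$ and $P_{\infty}(u\cdot\nabla u)=u\cdot\nabla u-\nabla q$ from the Helmholtz decomposition, this is exactly \eqref{23-n-1} a.e.\ in $(0,T)\times\Omega$ with $\nabla p:=\mu\nabla p_{u}^{\mathrm S}-\nabla q\in L^{\infty}([0,T];L^{2})$. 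The kinematic condition \eqref{kinematic-1} is preserved because $u(t,\cdot)\in K_{2}(\Omega)$. For the Navier condition \eqref{nav-f}: each $u^{N}(t,\cdot)\in D_{0,\zeta}(S)$ satisfies $(\nabla\times u^{N})^{\Vert}|_{\Gamma}=-\tfrac{1}{\zeta}(\ast u^{N})+2(\ast\pi(u^{N}))$, and, since the trace maps $H^{2}(\Omega)\to H^{1/2}(\Gamma)$ (applied to $\nabla u^{N}$) and $H^{2}(\Omega)\to H^{3/2}(\Gamma)$ (applied to $u^{N}$) are bounded while $\pi$ is smooth, the a.e.-$t$ strong $H^{2}$-convergence transfers this identity (an equality of $L^{2}(\Gamma)$-functions) to $u(t,\cdot)$ on $\Gamma$, so \eqref{nav-f} holds for a.e.\ $t$. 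Finally, the energy inequality \eqref{24-2-02} follows from \eqref{24-2-01} by weak lower semicontinuity. Since $T<T^{\ast}$ was arbitrary, $u$ is a strong solution on $[0,T^{\ast})$, which is the desired conclusion.

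The hard part will be obtaining enough compactness without losing regularity: one must extract the $L^{2}([0,T];H^{3})$-bound from the dissipation in \eqref{e3-05} (so that Theorem~\ref{m-th-24-03} and Corollary~\ref{co-23-31} are genuinely needed, not just the $H^{2}$-in-time-sup bound) in order to run Aubin--Lions and get strong $H^{1}$- and a.e.-$t$ $H^{2}$-convergence, while simultaneously keeping the limit in $L^{\infty}([0,T];H^{2})$ so that the Navier boundary condition — a constraint on the \emph{first derivatives} of $u$ on $\Gamma$ — survives the limit via the trace theorem; the weak-solution argument of Theorem~\ref{w-th-01}, which only controls $H^{1}$-norms, neither provides this compactness nor even makes \eqref{nav-f} meaningful pointwise on $\Gamma$.
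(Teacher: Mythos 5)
Your proposal is correct and follows exactly the route the paper intends: the paper states this theorem without proof immediately after Theorem~\ref{m-th-24-03}, the point being that the $N$-uniform bound \eqref{e-25-1} together with the dissipation in \eqref{e3-05} supplies the compactness needed to pass to the limit in the Galerkin scheme. Your write-up simply supplies the standard details (Aubin--Lions, limit passage in the nonlinear term, recovery of the Navier condition via traces) that the authors leave implicit.
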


\section{Inviscid limit as $\mu\to 0$}

In this section, we  analyze the inviscid limit of the solutions
$u^\mu(t,x)$ of the initial-boundary value problem
\eqref{23-n-1}--\eqref{nav-f}.

Let $u(t,x)$ be the unique smooth solution of the initial-boundary
value problem of the Euler equations:
\begin{equation}
\left\{
\begin{array}{ll}
{\partial_t}u+u\cdot \nabla u=-\nabla p\text{ , \ \ \ \ \ \ \ \ }
\\
\nabla\cdot u=0,
\\
u(0,\cdot )=u(0),\\
\left. u^{\bot}\right|_{\Gamma}=0,
\end{array}
\right.  \label{eu-01}
\end{equation}
up to time $T^{\ast}>0$. Notice that all solutions $u^{\mu}$ to
problem \eqref{23-n-1}--\eqref{nav-f}, $\mu\in (0, \mu_0]$ for some
$\mu_0>0$, subject to the same boundary conditions: $\left.
(u^{\mu})^{\bot}\right|_{\Gamma}=0$ and
$$
\left. (\nabla\times
u^{\mu})^{\Vert}\right|_{\Gamma}=-\frac{1}{\zeta}(*u^\mu)+2\big(*\pi(u^\mu)\big),
$$
while the solution $u$ of \eqref{eu-01} satisfies only the kinematic
boundary condition
and is
independent of the viscosity constant $\mu$.

\begin{theorem}
Suppose that, for all $\mu\in (0, \mu_{0}]$, a unique strong
solution $u^{\mu}$ of the initial-boundary value problem
\eqref{23-n-1}--\eqref{nav-f} and  a unique strong solution $u\in
H^2(\Omega)$ to the initial-boundary value problem \eqref{eu-01})
both exist up to time $T^{\ast}>0$. Then there exists $C=C(\mu_0, T,
\|u\|_{L^2([0,T]; H^2\cap W^{1,\infty}(\Omega)})$, independent of
$\mu$, such that, for any $T\in [0, T^{\ast}]$,
\begin{equation}
\sup_{0\le t\leq T}\|u^{\mu}(t,\cdot )-u(t,\cdot)\|_{2}\le
C\,\mu\rightarrow 0\text{ \ \ as }\mu \downarrow 0, \label{th01}
\end{equation}
and
\begin{equation*}
\int_{0}^{T}\|\nabla \left( u^{\mu}-u\right)
(s,\cdot)\|_{2}^{2}ds\leq C.
\end{equation*}
 It follows that the whole solution sequence $u^\mu$
of \eqref{23-n-1}--\eqref{nav-f} converges to the unique solution
$u(t,x)$ of the initial-boundary value problem \eqref{eu-01} in
$L^2$ as $\nu\to 0$.
\end{theorem}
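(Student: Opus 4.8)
The plan is a weighted $L^{2}$ energy estimate for the difference $w:=u^{\mu}-u$, in which the boundary terms produced by the viscous term are rewritten through the vorticity form of the Navier condition (Proposition~\ref{navier-c}). Since $u^{\mu}$ and $u$ are launched from the same datum $u_{0}$, we have $w(0,\cdot)=0$, and for each $t$ one has $w(t,\cdot)\in H^{2}(\Omega)$ with $\nabla\cdot w=0$ and $w^{\bot}|_{\Gamma}=0$. Subtracting the two momentum equations, writing $u^{\mu}\cdot\nabla u^{\mu}-u\cdot\nabla u=u^{\mu}\cdot\nabla w+w\cdot\nabla u$, and pairing with $w$ over $\Omega$, the transport term drops,
\begin{equation*}
\int_{\Omega}\langle u^{\mu}\cdot\nabla w,w\rangle=\tfrac12\int_{\Gamma}\langle u^{\mu},\nu\rangle|w|^{2}-\tfrac12\int_{\Omega}(\nabla\cdot u^{\mu})|w|^{2}=0,
\end{equation*}
using $\nabla\cdot u^{\mu}=0$ and $(u^{\mu})^{\bot}|_{\Gamma}=0$; the pressure term $\int_{\Omega}\langle\nabla(p^{\mu}-p),w\rangle$ vanishes for the same reason applied to $w$. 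This leaves
\begin{equation*}
\tfrac12\frac{d}{dt}\|w\|_{2}^{2}\le \mu\int_{\Omega}\langle\Delta u^{\mu},w\rangle+\|\nabla u\|_{\infty}\|w\|_{2}^{2},
\end{equation*}
where $\|\nabla u\|_{\infty}$ is finite since $u\in W^{1,\infty}$.

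The core is the viscous term. Split $\mu\int_{\Omega}\langle\Delta u^{\mu},w\rangle=\mu\int_{\Omega}\langle\Delta u,w\rangle+\mu\int_{\Omega}\langle\Delta w,w\rangle$; the first piece is harmless, $\le\mu\|\Delta u\|_{2}\|w\|_{2}$. For the second, Lemma~\ref{le-66-1} gives $\int_{\Omega}\langle\Delta w,w\rangle=-\|\nabla w\|_{2}^{2}+\tfrac12\int_{\Gamma}\partial_{\nu}(|w|^{2})$, and I expand the boundary integral by \eqref{june-063} and \eqref{june21-045}: on $\Gamma$, where $\nabla\cdot w=0$ and $w^{\bot}|_{\Gamma}=0$, one gets $\langle w\cdot\nabla w,\nu\rangle=-\pi(w,w)$, while (since $w$ is tangent to $\Gamma$, only the tangential vorticities enter) $\langle w\times(\nabla\times w),\nu\rangle=\langle w\times(\nabla\times u^{\mu})^{\Vert},\nu\rangle-\langle w\times(\nabla\times u)^{\Vert},\nu\rangle$. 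Into the first of these I substitute the Navier condition \eqref{nav-f} for $u^{\mu}$ and the Hodge identity \eqref{id-30-01}, obtaining $\langle w\times(\nabla\times u^{\mu})^{\Vert},\nu\rangle=-\tfrac1\zeta\langle u^{\mu},w\rangle+2\pi(u^{\mu},w)$ on $\Gamma$, while $|\langle w\times(\nabla\times u)^{\Vert},\nu\rangle|\le|\nabla\times u|\,|w|$. Writing $u^{\mu}=u+w$ on $\Gamma$ then separates a \emph{good} part $\mu\int_{\Gamma}\bigl(-\tfrac1\zeta|w|^{2}+\pi(w,w)\bigr)$ — nonpositive when $\pi\le\tfrac1\zeta$, and otherwise bounded by $\mu\|\pi\|_{\infty}\|w\|_{L^{2}(\Gamma)}^{2}$ plus the negative term — from a \emph{mismatch} part $\mu\int_{\Gamma}\langle\Phi_{u},w\rangle$, where $\Phi_{u}$ is a fixed linear combination of $u$, $\pi(u)$ and $(\nabla\times u)^{\Vert}$ on $\Gamma$, hence $\|\Phi_{u}\|_{L^{2}(\Gamma)}\le C\|u\|_{H^{2}}$.

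To close, I would use the trace inequality \eqref{sobolev-trace-embedding}, $\|w\|_{L^{2}(\Gamma)}^{2}\le\kappa\|\nabla w\|_{2}^{2}+C_{\kappa}\|w\|_{2}^{2}$: for $\kappa$ small the quadratic-in-$w$ boundary terms and the $\|\nabla w\|_{2}$-portion of the mismatch term are absorbed into the dissipation $-\mu\|\nabla w\|_{2}^{2}$, producing a differential inequality of the schematic form
\begin{equation*}
\frac{d}{dt}\|w\|_{2}^{2}+\mu\|\nabla w\|_{2}^{2}\le a(t)\,\|w\|_{2}^{2}+\mu\,b(t)\,\|w\|_{2}+\mu\,r(t),
\end{equation*}
with $a,b,r\in L^{1}(0,T)$ controlled by $\mu_{0}$ and $\|u\|_{L^{2}([0,T];H^{2}\cap W^{1,\infty}(\Omega))}$. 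Since $w(0,\cdot)=0$, Gronwall's inequality — applied to $\|w\|_{2}$ so as to keep the forcing linear — yields $\sup_{0\le t\le T}\|w(t,\cdot)\|_{2}\le C\mu$; integrating the same inequality gives $\mu\int_{0}^{T}\|\nabla w\|_{2}^{2}\le C\mu$, hence $\int_{0}^{T}\|\nabla w\|_{2}^{2}\le C$. As the estimate is uniform in $\mu$ and the Euler solution is unique, every subsequence of $\{u^{\mu}\}$ has the same limit $u$, so the whole family converges to $u$ in $L^{2}$ as $\mu\downarrow0$.

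The step I expect to be the main obstacle is the control of the mismatch integral $\mu\int_{\Gamma}\langle\Phi_{u},w\rangle$: because the Euler limit obeys only the kinematic condition and not the Navier condition, $\Phi_{u}$ does not vanish, so this term couples the small viscosity, the boundary trace $\|w\|_{L^{2}(\Gamma)}$, and the \emph{only $O(\mu)$-sized} bulk dissipation $\mu\|\nabla w\|_{2}^{2}$. The delicate point is to split it by Young's inequality so that the part thrown into the dissipation is affordable while the residual $u$-dependent forcing remains consistent with the asserted rate, and in particular to keep the purely $\|w\|_{2}$-forcing linear so that the Gronwall step loses no power of $\mu$; the remaining ingredients — the vanishing of the transport and pressure terms, the identity for $\tfrac12\int_{\Gamma}\partial_{\nu}(|w|^{2})$, and the trace and Sobolev imbeddings — are routine given Sections~2--3.
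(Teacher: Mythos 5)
Your proposal follows essentially the same route as the paper: set $w=u^{\mu}-u$, note $w(0,\cdot)=0$, pair the difference equation with $w$, kill the transport and pressure terms using $\nabla\cdot w=0$ and $w^{\bot}|_{\Gamma}=0$, convert the viscous boundary integral via \eqref{june-063}--\eqref{june21-045} and the Navier condition in the form \eqref{nav-f} together with \eqref{id-30-01}, absorb the boundary traces into $-\mu\|\nabla w\|_{2}^{2}$ by \eqref{sobolev-trace-embedding}, and close with Gronwall. The paper packages the boundary contribution as $2\mu\int_{\Gamma}\langle v^{\mu}\times b,\nu\rangle$ with $b=-\frac{1}{\zeta}\big(\ast(v^{\mu}+u)\big)+2\big(\ast\pi(v^{\mu}+u)\big)-(\nabla\times u)^{\Vert}$ and bounds it crudely by $C\big(\|u\|_{H^{1}(\Gamma)}^{2}+\|v^{\mu}\|_{L^{2}(\Gamma)}^{2}\big)$; your separation into a sign-definite quadratic part and a ``mismatch'' part $\mu\int_{\Gamma}\langle\Phi_{u},w\rangle$ is a slightly finer bookkeeping of the same terms, not a different method.

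The one place your write-up claims more than the argument delivers is the final rate. After Young's inequality the mismatch term necessarily leaves a forcing $\mu\,r(t)$ with $r\in L^{1}(0,T)$ that is \emph{zeroth} order in $w$: to absorb $\mu\|\Phi_{u}\|_{L^{2}(\Gamma)}\|\nabla w\|_{2}$ into the dissipation $\mu\|\nabla w\|_{2}^{2}$ you must pay $C\mu\|\Phi_{u}\|_{L^{2}(\Gamma)}^{2}$, and since the Euler solution does not satisfy the Navier condition, $\Phi_{u}\not\equiv 0$. Hence your differential inequality really is $\frac{d}{dt}\|w\|_{2}^{2}\le a\|w\|_{2}^{2}+\mu b\|w\|_{2}+\mu r$ with $r\not\equiv 0$, and Gronwall ``applied to $\|w\|_{2}$'' is not available: dividing by $\|w\|_{2}$ produces $\mu r/\|w\|_{2}$, which is uncontrolled near $w=0$. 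The honest comparison gives $\|w(t,\cdot)\|_{2}^{2}\le C\mu$, i.e. $\sup_{t}\|w\|_{2}\le C\sqrt{\mu}$, not $C\mu$. This puts you exactly on par with the paper, whose own final display bounds the \emph{squared} norm $\|v^{\mu}(t,\cdot)\|_{2}^{2}$ by $C\mu$; neither argument establishes the unsquared rate $C\mu$ asserted in \eqref{th01}. The remaining assertions --- $\int_{0}^{T}\|\nabla w\|_{2}^{2}\le C$ from integrating the same inequality, and convergence of the whole sequence by uniqueness of the Euler solution --- do follow as you describe.
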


\begin{proof}
Let $v^{\mu}=u^{\mu}-u$. Then $v^{\mu}$ satisfies the following
equations:
\begin{equation}
\left\{
\begin{array}{ll}
\partial_t v^{\mu} =\mu \Delta
v^{\mu}-\left(v^{\mu}+u\right)\cdot\nabla v^{\mu} -\nabla
P^{\mu}-v^{\mu}\cdot\nabla u+\mu\Delta u, \\
\nabla\cdot v^{\mu}=0\text{,}
\end{array}
\right.  \label{0-c-1}
\end{equation}
and the initial condition:
\begin{equation}\label{initial}
v^{\mu}(0,\cdot)=0,
\end{equation}
where $P^{\mu}=p^{\mu}-p$. Since both $u^{\mu}$ and $u$ satisfy the
kinematic condition \eqref{kinematic-1}, so does $v^{\mu}$. Thus, by
means of the energy method, we obtain
\begin{eqnarray*}
\frac{d}{dt}\|v^{\mu}\|_{2}^{2} &=&2\mu \int_{\Omega}\langle
v^{\mu}, \Delta v^{\mu}\rangle -\int_{\Omega}\langle v^{\mu}+u,
 \nabla(|v^{\mu}|^{2})\rangle
-2\int_{\Omega}\langle \nabla P^{\mu}, v^{\mu}\rangle \\
&&-2\int_{\Omega}\langle v^{\mu}\cdot\nabla u, v^{\mu}\rangle
+2\mu\int_{\Omega}\langle \Delta u, v^{\mu}\rangle.
\end{eqnarray*}
Integration by parts in the first three integrals leads to
\begin{eqnarray*}
\frac{d}{dt}\|v^{\mu}\|_{2}^{2} &=&-2\mu \int_{\Omega}|\nabla\times
v^{\mu}|^{2}+2\mu\int_{\Gamma}\langle v^{\mu}\times
\left(\nabla\times v^{\mu}\right), \nu\rangle \\
&&-2\int_{\Omega}\langle v^{\mu}\cdot\nabla u, v^{\mu}\rangle
+2\mu\int_{\Omega}\langle\Delta u, v^{\mu}\rangle \\
&=&-2\mu\|\nabla v^{\mu}\|_{2}^{2}-2\mu \int_{\Gamma}\pi(v^{\mu},
v^{\mu})
+2\mu \int_{\Gamma}\langle v^{\mu}\times b, \nu\rangle \\
&&-2\int_{\Omega}\langle v^{\mu}\cdot\nabla u, v^{\mu}\rangle
+2\mu\int_{\Omega}\langle \Delta u, v^{\mu}\rangle,
\end{eqnarray*}
where
$b=-\frac{1}{\zeta}\big(*(v^\mu+u)\big)+2\big(*\pi(v^\mu+u)\big)-\big(\nabla\times
u\big)^\Vert$.

Furthermore, we  use the following estimate:
\begin{eqnarray*}
\int_{\Gamma}\langle v_{\mu}\times b,\nu \rangle \leq
\|b\|_{L^{2}(\Gamma)}\|v_{\mu}\|_{L^{2}(\Gamma)} \le
C\big(\|u\|_{H^1(\Gamma)}^2 +\|v^{\mu}\|_{L^{2}(\Gamma)}^{2}\big)
\end{eqnarray*}
to obtain
\begin{eqnarray}
\frac{d}{dt}\|v^{\mu}\|_{2}^{2} &\leq& -2\mu\|\nabla
v^{\mu}\|_{2}^{2}+\mu C\big( \|v^{\mu}\|_{L^{2}(\Gamma)}^{2}
+\|u\|_{H^{1}(\Gamma)}^{2}\big)  \notag \\
&&+2\|\nabla u\|_{\infty}\|v^{\mu}\|_{2}^{2}+2\mu \|\Delta
u\|_{2}\|v^{\mu}\|_{2}\text{.}  \label{p-01}
\end{eqnarray}
Finally, we use the Sobolev imbeddings:
\begin{equation*}
C\|v^{\mu}\|_{L^{2}(\Gamma)}^{2}\leq \|\nabla v_{\mu}\|_{2}^{2}+
\tilde{C}\|v_{\mu}\|_{2}^{2}, \qquad \|u\|_{H^1(\Gamma)}^2\le
C\|u\|_{H^2(\Omega)}^2
\end{equation*}
to establish the differential inequality:
\begin{eqnarray}
\frac{d}{dt}\|v_{\mu}\|_{2}^{2}+\mu \|\nabla v_{\mu}\|_{2}^{2}\le
C\left(\|\nabla u\|_{\infty}+\mu_0\right) \|v_{\mu}\|_{2}^{2}
+\mu \|u\|_{H^{2}(\Omega)}^{2}\text{.} \label{p-02}
\end{eqnarray}
Gronwall's inequality implies that
\begin{eqnarray}
\|v^{\mu}(t, \cdot)\|_{2}^{2} \leq &
\mu \int_{0}^{t}e^{C(\int_{s}^{t}\|\nabla
u(\tau,\cdot)\|_{\infty}d\tau+\mu_0
(t-s))}\|u(s,\cdot)\|_{H^{2}(\Omega)}^{2} ds=:C\mu \text{,}
\label{p-04}
\end{eqnarray}
where $C>0$ depends only on $\mu_0, T^*$, and $\|u\|_{L^2([0,T];
H^2\cap W^{1,\infty}(\Omega)}$. Using this and \eqref{p-02}, we
further have
\begin{equation*}
\int_{0}^{t}\|\nabla v_{\mu}(s, \cdot)\|_{2}^{2}ds\leq C\mu.
\end{equation*}
This completes the proof.
\end{proof}

In order to ensure the convergence of $u^{\mu}$ to $u$ in the strong
sense (say, $H^{2}(\Omega)$), a necessary condition is that $u$ must
match the Navier's $\zeta$-condition \eqref{nav-f}.

\medskip
\bigskip
{\bf Acknowledgments.} Gui-Qiang Chen's research was supported in
part by the National Science Foundation under Grants DMS-0807551,
DMS-0720925, and DMS-0505473, and the Natural Science Foundation of
China under Grant NSFC-10728101. Zhingmin Qian's research was
supported in part by EPSRC grant EP/F029578/1. This paper was
written as part of  the international research program on Nonlinear
Partial Differential Equations at the Centre for Advanced Study at
the Norwegian Academy of Science and Letters in Oslo during the
academic year 2008--09.

\end{document}